\theoremstyle{plain}
\newtheorem{definition}{Definition}
\newtheorem{theorem}{Theorem}
\newtheorem{corollary}{Corollary}
\newtheorem{lemma}{Lemma}
\theoremstyle{remark}
\newtheorem*{remark*}{Remark}
\DeclareMathOperator{\Span}{span}
\DeclareMathOperator{\Hom}{Hom}
\DeclareMathOperator{\pr}{pr}
\DeclareMathOperator{\id}{id}
\DeclareMathOperator{\grad}{grad}
\newcommand{\ndash}{\nobreakdash-}
\begin{document} 
	
\selectlanguage{english}
%\maketitle	
\begin{frontmatter}	
	%% Title, authors and addresses
	
	%% use the tnoteref command within \title for footnotes;
	%% use the tnotetext command for theassociated footnote;
	%% use the fnref command within \author or \address for footnotes;
	%% use the fntext command for theassociated footnote;
	%% use the corref command within \author for corresponding author footnotes;
	%% use the cortext command for theassociated footnote;
	%% use the ead command for the email address,
	%% and the form \ead[url] for the home page:
	%% \title{Title\tnoteref{label1}}
	%% \tnotetext[label1]{}
	%% \author{Name\corref{cor1}\fnref{label2}}
	%% \ead{email address}
	%% \ead[url]{home page}
	%% \fntext[label2]{}
	%% \cortext[cor1]{}
	%% \address{Address\fnref{label3}}
	%% \fntext[label3]{}
	
	\title{On holonomy of Weyl connections in Lorentzian signature}
	
	%% use optional labels to link authors explicitly to addresses:
	%% \author[label1,label2]{}
	%% \address[label1]{}
	%% \address[label2]{}
	
	\author{Andrei Dikarev}
	
	\address{Department of Mathematics and Statistics, Masaryk University, Faculty of Science, Kotl\'a\v{r}sk\'a 2, 611 37 Brno, Czech Republic\\ Email: xdikareva@math.muni.cz}
	
	\begin{abstract}
		Holonomy algebras of Weyl connections in Lorentzian signature are classified. In particular, examples of Weyl connections with all possible holonomy algebras are constructed.
		\vskip0.5cm	
		\noindent {\bf AMS Mathematics Subject Classification 2020:  53C29; 53C18; 53B30.} 
		
		%\tableofcontents
		%15A66 Clifford algebras, spinors
		%53B30 Local differential geometry of Lorentz metrics, indefinite metrics
		% 53C29 issues of holonomy
		%   53C18 Conformal structures on manifolds
	\end{abstract}
	
	\begin{keyword}
		Weyl connection; Lorentzian signature; holonomy group; holonomy algebra; Berger algebra
	\end{keyword}
	
\end{frontmatter}	

\section{Introduction}
\label{IntroSec}
		
The holonomy group of a connection is an
important invariant. This motivates the classification problem for holonomy
groups. There are classification results for some cases of linear connections. There is a classification of irreducible connected holonomy groups of linear torsion\ndash free connections \cite{Merkulov99}. Important result is the classification of connected holonomy groups of Riemannian manifolds \cite{Berger,Besse87,Bryant87,Joyce07}. Lorentzian holonomy groups are classified  \cite{BerardBergery93,Leistner07,Gal06,Baum12,ESI,GalLorEin,Gal15}. There are partial results for holonomy groups of pseudo-Riemannian manifolds of other signatures \cite{B13,B11,BB-I97,F-K,IRMA,Gal19,Gal18,Gal13,Ik2n,Vol1,Vol2}.

Of certain interests are Weyl manifolds $(M, c, \nabla)$, where 
$c$ is a conformal class of pseudo-Riemannian metrics and $\nabla$ is a torsion-free linear connection preserving $c$. In the Riemannian signature the connected holonomy groups of such connection are classified \cite{Belgun11,Grabbe14}.

The result of this paper is a complete classification of connected holonomy groups (equivalently, of holonomy algebras) of Weyl connections in the Lorentzian signature.

In Sections~\ref{BackgroundSec} we give necessary background on Weyl manifolds and Berger algebras. Berger algebras have the same algebraic properties as the holonomy algebras and they are candidates to the holonomy algebras.
In Section~\ref{MainSec} the main results of the paper are stated.  Theorem~\ref{class2Th} gives the classification of the Berger algebras that preserve proper non-degenerate subspaces of the Minkowski space, these  algebras correspond to conformal products in the sense of \cite{Belgun11}. Then we  assume that the  algebras do not preserve any proper non-degenerate subspace of the Minkowski space, these algebras are called weakly irreducible. If such an algebra is different from $\mathfrak{co}(1,n+1)$, then it preserves an isotropic line. Theorem~\ref{class3Th} gives the classification of such Berger algebras. Theorem \ref{theorem5} states that each obtained Berger algebra is the holonomy algebra of a Weyl connection. The rest of the paper is dedicated to the proofs of the main results.
In Section~\ref{bCasesSec} we describe  weakly irreducible subalgebras of $\mathfrak{co} (1, n + 1)$.
In Section~\ref{AuxSec} we provide the auxiliary results, namely, we found some spaces of algebraic curvature tensors using the first prolongation of Lie algebras representations.  In Section~\ref{CurvTensorSec} we describe the structure of the spaces of curvature tensors for subalgebras of $\mathfrak{co} (1, n + 1)$. The proofs of Theorems~\ref{class2Th} and~\ref{class3Th} are given in Sections~\ref{2ThProofSec} and~\ref{3ThProofSec} correspondingly.
Finally, In Section~\ref{RealizationnSec}, for each Berger algebra $\mathfrak{g} \subset \mathfrak{co}(1, n+1)$,  we construct  a Weyl connection $\nabla$ such that the holonomy algebra of $\nabla$ is  $\mathfrak{g}$, this proves Theorem \ref{theorem5}.

The author would like to express his sincere gratitude to A.~Galaev for useful discussions and suggestions.
The author is grateful to the anonymous referee for careful reading of the paper and valuable comments that greatly affected the final appearance of the paper.
The work was supported by the project MUNI/A/1160/2020.
	
\section{Preliminaries}
\label{BackgroundSec}

Denote by $(M, c)$ a conformal manifold, where $M$ is a smooth manifold, and $c$ is a conformal class of pseudo-Riemannian metrics on $M$. Recall that two metrics $g$ and $h$ are conformally equivalent if and only if $h = e^{2f} g$, for some $f \in C^\infty (M)$.
\begin{definition}
	A Weyl connection $\nabla$ on a conformal manifold $(M, c)$ is a torsion-free linear connection that preserves the conformal class $c$. The triple $(M, c, \nabla)$ is called a Weyl manifold.
\end{definition}
By preserving a conformal class, we understand that if $g \in c$, then there exists a 1-form $\omega_g$ such that
\begin{equation*}
	\nabla g = 2 \omega_g \otimes g.
\end{equation*}	
This formula is conformally invariant in the following sense:
\begin{equation*}
	\text{if}\quad h = e^{2f} g,\enspace f \in C^\infty (M),\quad \text{then}\quad \nabla h = 2 \omega_h \otimes h, \quad \text{where}\quad \omega_h = \omega_g - df.
\end{equation*}	

For the holonomy algebra of a Weyl connection of signature $(r, s)$ we have $\mathfrak{hol}(\nabla) \subset \mathfrak{co} (r, s) = \mathbb{R} \id_{\mathbb{R}^{r,s}} \oplus \mathfrak{so} (r, s)$. If for a metric $g \in c$ it holds ${\nabla g = 0}$, then $\mathfrak{hol}(\nabla) \subset \mathfrak{so} (r, s)$. Then the conformal structure is called closed and we are not interested in this case. Thus we assume that $\mathfrak{hol}(\nabla) \subset \mathfrak{co} (r, s)$ and $\mathfrak{hol}(\nabla) \not\subset \mathfrak{so} (r, s)$. The classification of the holonomy algebras of Weyl connections of positive signature is known~\cite{Belgun11}. 
Namely, let $n = \dim M$, then only the following holonomy algebras of non-closed Weyl structures of Riemannian signature are possible:
\begin{itemize}
	\item $\mathfrak{co}(n)$; 
	%is the only irreducible subalgebra;
	\item $\mathbb{R} \id_{\mathbb{R}^{n}} \oplus \mathfrak{so}(k)\oplus \mathfrak{so}(n - k)$, where $1 \leqslant k \leqslant n - 1$;
	%$\mathbb{R}^{\ast}_{+} \times \mathfrak{so}(k) \times \mathfrak{so}(n - k)$ 
	\item 
	%if $n = 4$ then the holonomy algebra is either $\mathbb{R} \id \oplus \mathfrak{so}(2)\oplus \mathfrak{so}(2)$ or 
	$\mathbb{R} \id_{\mathbb{R}^{4}} \oplus \left\{ \left. \begin{pmatrix} 
	0 & -a & 0 & 0 \\
	a & 0 & 0 & 0 \\
	0 & 0 & 0 & -a \\
	0 & 0 & a & 0 \end{pmatrix} \right| a \in \mathbb{R} \right\} \quad (n = 4).$
\end{itemize}

\begin{definition}
	Let $\mathfrak{g}$ be a subalgebra of $\mathfrak{gl} (n, \mathbb{R})$. A linear map $R : \mathbb{R}^{n} \wedge \mathbb{R}^{n} \rightarrow \mathfrak{g}$ satisfying the condition
	\begin{itemize}
		\item[] $R(X, Y) Z + R(Y, Z) X + R(Z, X) Y = 0 \quad \forall X, Y, Z \in \mathbb{R}^{n}$ (Bianchi identity)
	\end{itemize}
	is called an algebraic curvature tensor of type $\mathfrak{g}$.
\end{definition}

Let $\mathscr{R} (\mathfrak{g}) $ be the vector space of all algebraic curvature tensors of type $\mathfrak{g}$. Let $$L ( \mathscr{R} (\mathfrak{g} ) ) := \Span \{ R(X, Y) \mid R \in \mathscr{R} (\mathfrak{g}), \quad X, Y \in \mathbb{R}^{n} \} \subset \mathfrak{g}.$$

\begin{definition}
	A subalgebra $\mathfrak{g} \subset \mathfrak{gl} (n, \mathbb{R})$ is called a Berger algebra if $L ( \mathscr{R} (\mathfrak{g} ) ) = \mathfrak{g}$.
\end{definition}

The next theorem follows from the Ambrose-Singer Theorem~\cite{AmbSin53}.

\begin{theorem}
	If $\mathfrak{g} \subset \mathfrak{gl} (n, \mathbb{R})$ is the holonomy algebra of a torsion-free linear connection, then $\mathfrak{g}$ is a Berger algebra.
\end{theorem}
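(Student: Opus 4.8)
The plan is to deduce this statement directly from the Ambrose--Singer Theorem, which identifies the holonomy algebra with the span of curvature operators parallel-transported around the manifold. Concretely, the Ambrose--Singer Theorem asserts that if $\nabla$ is a linear connection with curvature tensor $R^\nabla$ and holonomy algebra $\mathfrak{g} = \mathfrak{hol}(\nabla)$ at a point $p$, then $\mathfrak{g}$ is spanned by all operators of the form $\tau_\gamma^{-1} \circ R^\nabla_q(X, Y) \circ \tau_\gamma$, where $\gamma$ ranges over piecewise-smooth loops based at $p$, $q$ is the endpoint of the associated path, $\tau_\gamma$ denotes parallel transport, and $X, Y$ are tangent vectors at $q$. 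So the first step is to recall this description and fix notation.

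The key observation is that, after identifying the tangent space $T_q M$ with $\mathbb{R}^n$ via a parallel frame, each transported curvature operator $\tau_\gamma^{-1} \circ R^\nabla_q(X, Y) \circ \tau_\gamma$ is an element of $\mathfrak{g}$, and moreover it can be viewed as the value $R(X', Y')$ of a linear map $R \colon \mathbb{R}^n \wedge \mathbb{R}^n \to \mathfrak{g}$. The second step is therefore to verify that each such transported curvature operator arises as a genuine algebraic curvature tensor of type $\mathfrak{g}$, i.e. an element of $\mathscr{R}(\mathfrak{g})$. This requires two checks: that the operator takes values in $\mathfrak{g}$ (immediate, since parallel transport normalizes $\mathfrak{hol}(\nabla)$ by the holonomy principle), and that it satisfies the first Bianchi identity. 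The Bianchi identity for $R^\nabla$ holds precisely because $\nabla$ is torsion-free; this is the place where the torsion-free hypothesis is essential, and it is the main point one must not skip.

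Combining these, the third step is to conclude that every Ambrose--Singer generator lies in $L(\mathscr{R}(\mathfrak{g}))$, whence $\mathfrak{g} = \mathfrak{hol}(\nabla) \subseteq L(\mathscr{R}(\mathfrak{g}))$. The reverse inclusion $L(\mathscr{R}(\mathfrak{g})) \subseteq \mathfrak{g}$ is automatic from the very definition of $\mathscr{R}(\mathfrak{g})$, since every $R \in \mathscr{R}(\mathfrak{g})$ takes values in $\mathfrak{g}$ by construction. Hence $L(\mathscr{R}(\mathfrak{g})) = \mathfrak{g}$, which is exactly the Berger condition.

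I expect the main obstacle to be purely expository rather than mathematical: one must be careful that the transported curvature operators, which a priori are only individual elements of $\mathfrak{g}$, genuinely assemble into maps satisfying the Bianchi identity so that they lie in $\mathscr{R}(\mathfrak{g})$ as \emph{tensors} and not merely in $\mathfrak{g}$ as operators. Since the statement only requires that the span $L(\mathscr{R}(\mathfrak{g}))$ recovers $\mathfrak{g}$, it suffices to exhibit, for each Ambrose--Singer generator, a single tensor in $\mathscr{R}(\mathfrak{g})$ whose image contains that generator; the transported curvature tensor of $\nabla$ at the relevant point does the job. Given that this is a standard consequence of Ambrose--Singer, the proof will be short, essentially amounting to recalling the theorem and noting that torsion-freeness supplies the Bianchi identity.
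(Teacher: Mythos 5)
Your proposal is correct and takes essentially the same route as the paper: the paper offers no written proof at all, stating only that the theorem ``follows from the Ambrose--Singer Theorem,'' and your argument supplies exactly the standard details behind that remark (pulling back the curvature operators by parallel transport, noting that torsion-freeness gives the first Bianchi identity so the transported tensors lie in $\mathscr{R}(\mathfrak{g})$, and observing that the reverse inclusion $L(\mathscr{R}(\mathfrak{g})) \subseteq \mathfrak{g}$ holds by definition). Nothing in your write-up deviates from or adds to what the paper's citation implicitly invokes.
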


We will denote the Minkowski space by $\mathbb{R}^{1,n+1}$, and by $(\cdot, \cdot)$ the metric on it. A basis $p, e_1, \ldots , e_n, q$ of the space $\mathbb{R}^{1,n+1}$ is called a \textsl{Witt basis} if $p$ and $q$ are isotropic vectors such that $(p, q) = 1$, and $e_1, \ldots , e_n$ is an orthonormal basis of a subspace $\mathbb{R}^{n}$ which is orthogonal to the vectors $p$ and $q$.

Denote by $\mathfrak{co} (1, n + 1)_{\mathbb{R} p}$ the subalgebra of $\mathfrak{co} (1, n + 1)$ preserving the isotropic line $\mathbb{R} p$. It is clear that $$\mathfrak{co} (1, n + 1)_{\mathbb{R} p} = \mathbb{R} \id_{\mathbb{R}^{1,n+1}} \oplus \mathfrak{so} (1, n + 1)_{\mathbb{R} p},$$ where $\mathfrak{so} (1, n + 1)_{\mathbb{R} p}$ is the subalgebra of $\mathfrak{so} (1, n + 1)$ which preserves $\mathbb{R} p$. We can identify the Lie algebra $\mathfrak{so} (1, n + 1)_{\mathbb{R} p}$ with the following matrix Lie algebra:
\begin{equation*}
\mathfrak{so} (1, n + 1)_{\mathbb{R} p} = \left\{
(a, A, X) :=  \left. 
\begin{pmatrix}     
a & X^t & 0 \\
0 & A & -X \\
0 & 0 & -a
\end{pmatrix} \right|
\begin{matrix}     
a \in \mathbb{R} \\
A \in \mathfrak{so} (n) \\
X \in \mathbb{R}^n
\end{matrix} \right\} .
\end{equation*}	
The non-zero brackets in $\mathfrak{so} (1, n + 1)_{\mathbb{R} p}$ are:
\begin{align}
\label{soBrackets}
\begin{split}
&[(a, 0, 0), (0, 0, X)] = (0, 0, aX), \quad [(0, A, 0), (0, 0, X)] =  (0, 0, AX),\\
&[(0, A, 0), (0, B, 0)] = (0, [A, B], 0).
\end{split}
\end{align}
We get the decomposition 
$$\mathfrak{so} (1, n + 1)_{\mathbb{R} p} = (\mathbb{R}\oplus\mathfrak{so}(n))\ltimes\mathbb{R}^n.$$
An element of $\mathfrak{co} (1, n + 1)_{\mathbb{R} p}$ will be denoted by $(b, a, A, X)$, where $b \in \mathbb{R}$ and  $(a, A, X) \in {\mathfrak{so} (1, n + 1)_{\mathbb{R} p}}$.

Recall that each subalgebra $\mathfrak{h}\subset\mathfrak{so}(n)$
is compact and there exists the decomposition
$$
\mathfrak{h}=\mathfrak{h}'\oplus\mathfrak{z}(\mathfrak{h}),
$$
where $\mathfrak{h}'=[\mathfrak{h},\mathfrak{h}]$ is the commutant
of~$\mathfrak{h}$, and $\mathfrak{z}(\mathfrak{h})$ is the center
of~$\mathfrak{h}$~\cite{Onishchik90}.

\begin{definition}
	A Lie subalgebra $\mathfrak{g}\subset \mathfrak{so}(r,s)$ (or $\mathfrak{g}\subset \mathfrak{co}(r,s)$) is called
	weakly irreducible if it does not preserve any proper
	non-degenerate vector subspace of $\mathbb{R}^{r,s}$.
\end{definition}

The following theorem describes weakly irreducible subalgebras of $\mathfrak{so} (1, n + 1)_{\mathbb{R} p}$ and belongs to B\'erard-Bergery and Ikemakhen~\cite{BerardBergery93}.
\begin{theorem}
	\label{soClassTh} 
	A subalgebra $\mathfrak{g}\subset\mathfrak{so}(1,n+1)_{\mathbb{R} p}$ is weakly irreducible if and only if~$\mathfrak{g}$ is a Lie algebra of one of the following
	types.
	
	{\rm\textbf{Type~1}}:
	$$
	\mathfrak{g}^{1,\mathfrak{h}}=(\mathbb{R}\oplus\mathfrak{h})\ltimes
	\mathbb{R}^n=\left\{ \left.\begin{pmatrix}
	a &X^t & 0\\ 0 & A &-X \\ 0 & 0 & -a \\
	\end{pmatrix}\right| a\in \mathbb{R},\ X\in \mathbb{R}^n,\
	A \in \mathfrak{h}\right\},
	$$
	where $\mathfrak{h}\subset\mathfrak{so}(n)$ is a subalgebra.
	
	{\rm\textbf{Type~2}}:
	$$
	\mathfrak{g}^{2,\mathfrak{h}}=\mathfrak{h}\ltimes\mathbb{R}^n=
	\left\{ \left. \begin{pmatrix}
	0 &X^t & 0\\ 0 & A &-X \\ 0 & 0 & 0 \\
	\end{pmatrix}\right| X\in \mathbb{R}^n,\ A \in \mathfrak{h}\right\},
	$$
	where $\mathfrak{h}\subset\mathfrak{so}(n)$ is a subalgebra.
	
	{\rm\textbf{Type~3}}:
	\begin{align*}
	\mathfrak{g}^{3,\mathfrak{h},\varphi}&=\{(\varphi(A),A,0)\mid
	A\in\mathfrak{h}\}\ltimes\mathbb{R}^n
	\\
	&=\left\{ \left. \begin{pmatrix}
	\varphi(A) &X^t & 0\\ 0 & A &-X \\ 0 & 0 & -\varphi(A) \\
	\end{pmatrix}\right| X\in \mathbb{R}^n,\ A \in \mathfrak{h}\right\},
	\end{align*}
	where $\mathfrak{h}\subset\mathfrak{so}(n)$ is a subalgebra
	satisfying the condition $\mathfrak{z}(\mathfrak{h})\ne\{0\}$, and $\varphi\colon\mathfrak{h}\to\mathbb{R}$ is a non-zero linear map
	with the property $\varphi\big|_{[\mathfrak{h}, \mathfrak{h}]}=0$.
	
	{\rm\textbf{Type~4}}:
	\begin{align*}
	\mathfrak{g}^{4,\mathfrak{h},m,\psi}&=\{(0,A,X+\psi(A))\mid
	A\in\mathfrak{h},\ X\in \mathbb{R}^m\}
	\\
	&=\left\{ \left. \begin{pmatrix}
	0 &X^t&\psi(A)^t & 0\\ 0 & A&0 &-X \\ 0 & 0 & 0 &-\psi(A) \\
	0&0&0&0\\ \end{pmatrix} \right| X\in \mathbb{R}^{m},\ A\in
	\mathfrak{h}\right\},
	\end{align*}
	where an orthogonal decomposition
	$\mathbb{R}^n=\mathbb{R}^m\oplus\mathbb{R}^{n-m}$ is fixed,
	$\mathfrak{h}\subset\mathfrak{so}(m)$,
	$\dim\mathfrak{z}(\mathfrak{h})\geqslant n-m$, and
	$\psi\colon\mathfrak{h}\to \mathbb{R}^{n-m}$ is a surjective
	linear map with the property $\psi\big|_{{[\mathfrak{h}, \mathfrak{h}]}}=0$.
\end{theorem}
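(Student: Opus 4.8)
The plan is to prove Theorem~\ref{soClassTh} by analyzing the structure of an arbitrary weakly irreducible subalgebra $\mathfrak{g}\subset\mathfrak{so}(1,n+1)_{\mathbb{R} p}$ through its interaction with the decomposition $\mathfrak{so}(1,n+1)_{\mathbb{R} p}=(\mathbb{R}\oplus\mathfrak{so}(n))\ltimes\mathbb{R}^n$. First I would introduce the two natural projections: let $\mathrm{pr}_{\mathbb{R}\oplus\mathfrak{so}(n)}\colon\mathfrak{g}\to\mathbb{R}\oplus\mathfrak{so}(n)$ be the projection killing the $\mathbb{R}^n$-part, and write $\mathfrak{h}:=\mathrm{pr}_{\mathfrak{so}(n)}(\mathfrak{g})\subset\mathfrak{so}(n)$ for the image in the orthogonal part. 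Set $\mathfrak{n}:=\mathfrak{g}\cap\mathbb{R}^n$, the intersection of $\mathfrak{g}$ with the abelian ideal $\mathbb{R}^n\subset\mathfrak{so}(1,n+1)_{\mathbb{R} p}$. Using the brackets \eqref{soBrackets}, one checks that $\mathfrak{n}$ is an $\mathfrak{h}$-invariant subspace of $\mathbb{R}^n$, and a standard argument shows that weak irreducibility forces $\mathfrak{n}$ to be either all of $\mathbb{R}^n$ or otherwise heavily constrained. The orthogonal complement of $\mathfrak{n}$ in $\mathbb{R}^n$ is $\mathfrak{h}$-invariant; I would show that weak irreducibility of $\mathfrak{g}$ on $\mathbb{R}^{1,n+1}$ is equivalent to requiring that $\mathfrak{h}$ act so that no nonzero vector of $\mathbb{R}^n$ lies in the orthogonal complement of $\mathfrak{n}$ while being fixed in a way that splits off a nondegenerate subspace.

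The core of the argument is to classify how the $\mathbb{R}\oplus\mathfrak{so}(n)$-valued part of $\mathfrak{g}$ sits over $\mathfrak{h}$. The key observation is that the map sending $(a,A,X)\in\mathfrak{g}$ to the pair consisting of its $\mathbb{R}$-component $a$ and its $\mathfrak{so}(n)$-component $A$ has image a graph-type subspace of $\mathbb{R}\oplus\mathfrak{so}(n)$ over $\mathfrak{h}$. Concretely, for each $A\in\mathfrak{h}$ the set of admissible $a$ is an affine subspace; since $\mathfrak{g}$ is a subalgebra and $[\,\cdot\,,\cdot\,]$ on the commutant part is forced to pair with $a=0$ by the bracket relations, the assignment $A\mapsto a$ descends to a well-defined \emph{linear} map $\varphi\colon\mathfrak{h}\to\mathbb{R}$ vanishing on $[\mathfrak{h},\mathfrak{h}]$, \emph{provided} the $\mathbb{R}$-component is determined by $A$. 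The dichotomy is then: either the $\mathbb{R}$-component is free (i.e.\ $(1,0,0)\in\mathfrak{g}$), giving Type~1, or it is constrained. When it is constrained and $\varphi\equiv 0$ one lands in Type~2; when $\varphi$ is a nonzero linear functional on $\mathfrak{h}$ with $\varphi|_{[\mathfrak{h},\mathfrak{h}]}=0$ (which requires $\mathfrak{z}(\mathfrak{h})\ne\{0\}$, since $\varphi$ must factor through the abelianization $\mathfrak{h}/[\mathfrak{h},\mathfrak{h}]\cong\mathfrak{z}(\mathfrak{h})$) one gets Type~3. The fourth type arises precisely when $\mathfrak{n}\subsetneq\mathbb{R}^n$ is a proper subspace $\mathbb{R}^m$: then $\mathfrak{h}\subset\mathfrak{so}(m)$ acts trivially on the complementary $\mathbb{R}^{n-m}$, and the $\mathbb{R}^{n-m}$-components of elements of $\mathfrak{g}$ are governed by a linear map $\psi\colon\mathfrak{h}\to\mathbb{R}^{n-m}$; surjectivity of $\psi$ together with $\psi|_{[\mathfrak{h},\mathfrak{h}]}=0$ and $\dim\mathfrak{z}(\mathfrak{h})\geqslant n-m$ is exactly what weak irreducibility demands to prevent a nondegenerate invariant subspace from splitting off.

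For each case I would verify \textbf{two directions}. For sufficiency, I would check directly from \eqref{soBrackets} that each listed family is closed under the bracket (so it is indeed a subalgebra) and that it preserves no proper nondegenerate subspace: the point is that any invariant subspace must be compatible with the $\mathbb{R}^n$-translations encoded in the $X$-part, and the presence of the full (or suitably coupled) translation part forces every invariant subspace to contain the isotropic line $\mathbb{R} p$ or its degenerate companions, hence to be degenerate. For necessity, starting from an arbitrary weakly irreducible $\mathfrak{g}$, I would extract $\mathfrak{h}$, $\mathfrak{n}$, $\varphi$, $\psi$ as above and show the constraints ($\varphi|_{[\mathfrak{h},\mathfrak{h}]}=0$, $\psi|_{[\mathfrak{h},\mathfrak{h}]}=0$, surjectivity, the dimension bounds on $\mathfrak{z}(\mathfrak{h})$) are all \emph{forced}; the vanishing on the commutant is immediate because $[\mathfrak{h},\mathfrak{h}]$ is generated by brackets and $\varphi,\psi$ are linear and annihilate brackets by the third relation in \eqref{soBrackets}.

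The main obstacle I expect is the \emph{necessity direction of weak irreducibility}, namely proving that these four families are genuinely exhaustive. The delicate point is ruling out hybrid cases: one must show that whenever $\mathfrak{n}$ is a proper subspace of $\mathbb{R}^n$, the $\mathbb{R}$-component functional $\varphi$ is forced to vanish (so Types~3 and~4 cannot be mixed into a single larger algebra), and that the complementary space $\mathbb{R}^{n-m}$ must be acted on trivially by $\mathfrak{h}$ with the coupling entirely carried by $\psi$. This amounts to a careful case analysis on whether $(1,0,0)\in\mathfrak{g}$ and on the structure of the $\mathfrak{h}$\ndash module $\mathbb{R}^n/\mathfrak{n}$, using that any $\mathfrak{h}$\ndash invariant splitting of $\mathbb{R}^n$ on which $\mathfrak{g}$ acts without translation would produce a nondegenerate invariant subspace of $\mathbb{R}^{1,n+1}$, contradicting weak irreducibility. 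Since this is a known result of B\'erard-Bergery and Ikemakhen, I would cite~\cite{BerardBergery93} for the complete verification and reproduce only the structural extraction of the data $(\mathfrak{h},m,\varphi,\psi)$.
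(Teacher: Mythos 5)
The paper offers no proof of this theorem at all: it is quoted as a known classification belonging to B\'erard-Bergery and Ikemakhen, with the proof residing in~\cite{BerardBergery93}. Your proposal — whose structural extraction (the projections, $\mathfrak{n}=\mathfrak{g}\cap\mathbb{R}^n$, the graph functionals $\varphi$ and $\psi$ vanishing on $[\mathfrak{h},\mathfrak{h}]$) is consistent with the standard argument — explicitly defers the decisive exhaustiveness step to that same citation, so it takes essentially the same route as the paper.
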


\section{Main results}
\label{MainSec}

Here we present the classification of the holonomy algebras $\mathfrak{g} \subset \mathfrak{co} (1, n + 1)$ of the Weyl connections such that  $\mathfrak{g} \not\subset \mathfrak{so} (1, n + 1)$, $n \geqslant 0$. 

First suppose that $\mathfrak{g} \subset \mathfrak{co} (1, n + 1)$ is irreducible. Then it is obvious that $\pr_{\mathfrak{so} (1, n + 1)}\mathfrak{g}\subset {\mathfrak{so}(1,n+1)}$ is irreducible as well. Therefore, $\pr_{\mathfrak{so} (1, n + 1)} \mathfrak{g} = \mathfrak{so} (1, n + 1)$, since $\mathfrak{so} (1, n + 1)$ does not have any proper irreducible algebra~\cite{Scala01}. Thus, $\mathfrak{g} = \mathfrak{co} (1, n + 1)$.~\smallskip

Next, let us suppose that $\mathfrak{g}$ preserves a non-degenerate subspace of $\mathbb{R}^{1, n + 1}$.
\begin{theorem}
\label{class2Th}
Let $\mathfrak{g} \subset  \mathfrak{co} (1, n + 1)$, $\mathfrak{g} \not\subset \mathfrak{so} (1, n + 1)$, be a Berger algebra which admits a proper non-degenerate invariant subspace. Then $\mathfrak{g}$ preserves an orthogonal decomposition $$\mathbb{R}^{1, n + 1} = \mathbb{R}^{1, k + 1} \oplus \mathbb{R}^{n - k}, \quad -1 \leqslant k \leqslant n - 1$$ and $\mathfrak{g}$ is conjugated to one of the following subalgebras:
\begin{itemize}
\item $\mathbb{R} \id_{\mathbb{R}^{1,n+1}}\oplus \mathfrak{so} (1, k + 1) \oplus \mathfrak{so} (n - k)  $, $\quad -1 \leqslant k \leqslant n - 1$;
\item $\Big(\mathbb{R}\big(\id_{\mathbb{R}^{1,n+1}}+ (0, -1, 0, 0)\big) \oplus \mathfrak{k} \ltimes \mathbb{R}^{k} \Big) \oplus \mathfrak{so} (n - k) $, where $0 \leqslant k \leqslant n - 1$ and $\mathfrak{k} \subset \mathfrak{so} (k)$ is the holonomy algebra of a Riemannian manifold.
\item $\mathbb{R}\id_{\mathbb{R}^{1,n+1}}\oplus\big(\mathbb{R} (0, -1, 0, 0) \oplus \mathfrak{k} \ltimes \mathbb{R}^{k} \big) \oplus \mathfrak{so} (n - k) $, where $1 \leqslant k \leqslant n - 1$ and $\mathfrak{k} \subset \mathfrak{so} (k)$ is the holonomy algebra of a Riemannian manifold.
\end{itemize}
\end{theorem}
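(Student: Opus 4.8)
The plan is to first convert the invariant subspace into an orthogonal splitting and then read off $\mathfrak{g}$ from the Bianchi identity applied to that splitting. Since $\mathfrak{g}$ preserves a proper non-degenerate subspace $V$, it also preserves $V^\perp$, and in Lorentzian signature exactly one of the two summands is timelike; writing the timelike one as $\mathbb{R}^{1,k+1}$ and the other as $\mathbb{R}^{n-k}$ produces the orthogonal decomposition and the range $-1\leqslant k\leqslant n-1$. Because any element of $\mathfrak{co}(1,n+1)$ preserving this splitting acts by one common scalar plus skew parts on each block, we get $\mathfrak{g}\subset\mathbb{R}\id_{\mathbb{R}^{1,n+1}}\oplus\mathfrak{so}(1,k+1)\oplus\mathfrak{so}(n-k)$, and it remains to decide which subalgebras actually occur.

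Next I would feed the decomposition into the Bianchi identity. For $R\in\mathscr{R}(\mathfrak{g})$ I split each argument into its $V$- and $V^\perp$-parts and evaluate the identity on the triples $(V,V,V^\perp)$ and $(V,V^\perp,V^\perp)$. In each case the ``wrong block'' component of the identity forces the pure blocks to have vanishing scalar part, so $R(V,V)\subset\mathfrak{so}(1,k+1)$ and $R(V^\perp,V^\perp)\subset\mathfrak{so}(n-k)$. Hence the scalar summand of $L(\mathscr{R}(\mathfrak{g}))$, which is nonzero precisely because $\mathfrak{g}\not\subset\mathfrak{so}(1,n+1)$, can only be produced by the mixed block $R(V,V^\perp)$. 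The surviving components of the same two identities say that, for fixed $Z\in V^\perp$, the map $X\mapsto R(X,Z)|_V$ is symmetric in its $V$-arguments, hence is a first-prolongation element of $\mathfrak{co}(1,k+1)$, and symmetrically for $\mathfrak{co}(n-k)$. Since the first prolongation of a conformal algebra is isomorphic to the dual space, the entire mixed block is then determined by a single bilinear form $\beta\colon V\times V^\perp\to\mathbb{R}$, namely its scalar part.

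The third step extracts the structure from $\beta$. The $\mathfrak{so}(n-k)$-component of $R(X,Z)$ works out to the rank-two operator $Z\wedge\beta(X,\cdot)^\sharp$; as $X,Z$ vary and $\beta\neq0$, these span $\mathfrak{so}(n-k)$ and are incompatible with any further orthogonal $\mathfrak{g}$-invariant splitting of $V^\perp$, since a block carrying only the scalar can never be a Berger algebra. This pins the Riemannian factor to the full $\mathfrak{so}(n-k)$. It then remains to identify $\mathfrak{g}_1:=\pr_{\mathfrak{co}(1,k+1)}\mathfrak{g}$: if it acts irreducibly on $\mathbb{R}^{1,k+1}$, then exactly as in the irreducible discussion above $\mathfrak{g}_1=\mathfrak{co}(1,k+1)$ and we land in the first case; otherwise $\mathfrak{g}_1$ is weakly irreducible and preserves an isotropic line, so Theorem~\ref{soClassTh}, together with the fact that the relevant $\mathfrak{k}\subset\mathfrak{so}(k)$ is a Riemannian holonomy algebra, supplies the $\mathfrak{k}\ltimes\mathbb{R}^k$ piece. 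The form $\beta$ then dictates how the single conformal direction is attached, giving either $\mathbb{R}\big(\id_{\mathbb{R}^{1,n+1}}+(0,-1,0,0)\big)$ (second case) or the independent pair $\mathbb{R}\id_{\mathbb{R}^{1,n+1}}\oplus\mathbb{R}(0,-1,0,0)$ (third case), with the corresponding ranges of $k$.

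I expect the coupled scalar/prolongation bookkeeping to be the main obstacle. The whole subtlety is that $\mathfrak{co}(1,n+1)$ carries a single scalar direction that must be threaded consistently through both blocks; controlling it is exactly what forces the Riemannian factor to be all of $\mathfrak{so}(n-k)$ and what produces precisely the two inequivalent ways of coupling the conformal factor to a weakly irreducible Lorentzian factor. This part leans on the explicit description of $\mathscr{R}(\mathfrak{g})$ through first prolongations developed in the later sections, rather than on any soft reducibility argument.
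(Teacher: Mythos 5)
Your first two steps are correct and coincide with the paper's own machinery: the Bianchi-identity splitting $R=R_1+R_2+R_3$, the vanishing of the scalar part on the pure blocks, and the identification of the mixed block with a first-prolongation element of the conformal algebra (hence with a single bilinear form $\beta$, the paper's map $Z$) are exactly Theorem~\ref{ssrTh} and formula~\eqref{fR3}. The genuine gap is in your third step, where you pass from knowing the \emph{projections} of $\mathfrak{g}$ to knowing $\mathfrak{g}$ itself. A subalgebra is not determined by its projections, and the real difficulty of the theorem is to exclude graph-type couplings between the scalar direction and the compact directions. Concretely, in your irreducible branch the assertion ``$\mathfrak{g}_1=\mathfrak{co}(1,k+1)$ and we land in the first case'' does not exclude, for $n-k=2$, the algebra
$$\mathfrak{g}=\mathfrak{so}(1,k+1)\oplus\mathbb{R}\big(e_{n-1}\wedge e_n+a\id_{\mathbb{R}^{1,n+1}}\big),\qquad a\neq 0,$$
which preserves the splitting, is not contained in $\mathfrak{so}(1,n+1)$, projects onto $\mathfrak{co}(1,k+1)$ and onto $\mathfrak{so}(n-k)$, and passes every test you impose (its $\beta$ is a priori unconstrained), yet is not on the list. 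The paper kills it by an explicit computation with the mixed block: requiring $R_3(X_1,e_{n-1})$ and $R_3(X_1,e_n)$ to take values in this $\mathfrak{g}$ couples the scalar part $(Z(X_2),X_1)$ to the coefficient of $e_{n-1}\wedge e_n$ and yields $(Z(e_n),X_1)=-a^2(Z(e_n),X_1)$, i.e.\ $Z=0$, so the algebra is not Berger. (The same branch also needs the soft argument that no diagonal embedding via a Lie algebra homomorphism $\mathfrak{so}(1,k+1)\to\mathfrak{so}(n-k)$ occurs, which the paper gets from simplicity versus compactness.) The identical issue reappears at the end of your weakly irreducible branch: ``the form $\beta$ dictates how the conformal direction is attached'' silently discards the candidate $\mathbb{R}(\id_{\mathbb{R}^{1,n+1}}+p\wedge q)\oplus\{\theta(A)\id_{\mathbb{R}^{1,n+1}}+A\mid A\in\mathfrak{k}\oplus\mathfrak{so}(n-k)\}\ltimes p\wedge\mathbb{R}^{k}$ with $\theta\neq0$; the paper needs a further curvature argument (none of $R_1,R_2,R_3$ can produce $\theta(A)\id_{\mathbb{R}^{1,n+1}}+A$ with $\theta(A)\neq0$) to force $\theta=0$ and land on the second algebra of the list.

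A smaller inaccuracy: your rank-two operators $Z\wedge\beta(X,\cdot)^\sharp$ do not \emph{span} $\mathfrak{so}(n-k)$. In the weakly irreducible case the prolongation constraint must be taken relative to $\mathfrak{co}(1,k+1)_{\mathbb{R}p}$ (Lemma~\ref{coRp1Lemma}), so $\beta$ has rank one and these operators span only $e_{k+1}\wedge\mathbb{R}^{n-k}$; one obtains all of $\mathfrak{so}(n-k)\subset\mathfrak{g}$ only by additionally taking Lie brackets inside $\mathfrak{g}$, as the paper does.
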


The matrix forms of the last two algebras from Theorem \ref{class2Th} with respect to the basis\\ $p,e_1,\dots,e_{k},q,e_{k+1},\dots ,e_n$ are the following:
$$\left\{ \left. \begin{pmatrix}
0 &X^t&0 & 0\\ 0 & A + b E_k &-X &0 \\ 0 & 0 & 2b &0 \\
0&0&0& B +b E_{n-k}\\ \end{pmatrix} \right| b\in\mathbb{R},\, X\in \mathbb{R}^{k},\ A\in
\mathfrak{k},\, B\in\mathfrak{so}(n-k)\right\},$$
$$\left\{ \left. \begin{pmatrix}
b+a &X^t&0 & 0\\ 0 &  A + b E_k &-X &0 \\ 0 & 0 & b-a &0 \\
0&0&0& B +b E_{n-k}\\ \end{pmatrix} \right| a,b\in\mathbb{R},\, X\in \mathbb{R}^{k},\ A\in
\mathfrak{k},\, B\in\mathfrak{so}(n-k)\right\}.$$

Now, we may assume that $\mathfrak{g}$ does not preserve any non-degenerate subspace of $\mathbb{R}^{1, n+1}$ and it is not irreducible, i.e., it is weakly irreducible and not irreducible. Let $\mathfrak{g}$ preserve a degenerate subspace $W \subset \mathbb{R}^{1, n+1}$. That means that $\mathfrak{g}$ preserves the isotropic line $W \cap W^{\perp}$. We fix a Witt basis $p, e_1, \ldots , e_n, q $ in such a way that $W \cap W^{\perp} = \mathbb{R} p $.

\begin{theorem}
	\label{class3Th} A subalgebra $\mathfrak{g} \subset \mathfrak{co} (1, n + 1)$, $\mathfrak{g} \not\subset \mathfrak{so} (1, n + 1)$, is a weakly irreducible, not irreducible Berger algebra if and only if $\mathfrak{g}$ is conjugated to one of the following subalgebras of the Lie algebra $\mathfrak{co} (1, n + 1)_{\mathbb{R} p}$:
	\begin{itemize}
		\item $\mathbb{R} \id_{\mathbb{R}^{1,n+1}} \oplus \mathfrak{g}^{1,\mathfrak{h}}$, $\mathbb{R} \id_{\mathbb{R}^{1,n+1}} \oplus \mathfrak{g}^{2,\mathfrak{h}}$, $\mathbb{R} \id_{\mathbb{R}^{1,n+1}} \oplus \mathfrak{g}^{3,\mathfrak{h},\varphi}$, where $\mathfrak{g}^{1,\mathfrak{h}}$, $\mathfrak{g}^{2,\mathfrak{h}}$, $\mathfrak{g}^{3,\mathfrak{h},\varphi}$ are from Theorem~\ref{soClassTh}, and $\mathfrak{h} \subset \mathfrak{so} (n)$ is the holonomy algebra of a Riemannian manifold;
		\item $\mathfrak{g}^{\alpha, \theta, 1,\mathfrak{h}} = \{ ( \alpha a + \theta (A), a, A, 0) \mid a \in \mathbb{R}, A \in \mathfrak{h} \} \ltimes \mathbb{R}^{n} = $
		$$\left\{ (\alpha a + \theta (A)) \id_{\mathbb{R}^{1,n+1}} \left.\begin{pmatrix}
		a &X^t & 0\\ 
		0 & A &-X \\ 
		0 & 0 & -a \\
		\end{pmatrix} \right| a\in \mathbb{R},\ X\in \mathbb{R}^n,\
		A \in \mathfrak{h}\right\},
		$$
		where $\alpha\in\mathbb{R}$, $\theta :  \mathfrak{h} \rightarrow \mathbb{R}$ is a linear map such that $\theta \big|_{[\mathfrak{h}, \mathfrak{h}]} = 0$, $\alpha^2 + \theta^2 \neq 0$, and $\mathfrak{h} \subset \mathfrak{so} (n)$ is the holonomy algebra of a Riemannian manifold;
		\item $\mathfrak{g}^{\theta,2,\mathfrak{h}} = \{ (\theta (A), 0, A, 0) \mid A \in \mathfrak{h} \} \ltimes \mathbb{R}^{n} = $
		$$\left\{ \theta (A) \id_{\mathbb{R}^{1,n+1}} + \left.\begin{pmatrix}
		0 &X^t & 0\\ 
		0 & A &-X \\ 
		0 & 0 & 0 \\
		\end{pmatrix} \right| \ X\in \mathbb{R}^n,\
		A \in \mathfrak{h}\right\},
		$$
		where $\theta : \mathfrak{h} \rightarrow \mathbb{R}$ is a non-zero linear map such that $\theta \big|_{[\mathfrak{h}, \mathfrak{h}]} = 0$, and $\mathfrak{h} \subset \mathfrak{so} (n)$ is the holonomy algebra of a Riemannian manifold;
		\item $\mathfrak{g}^{\theta,3,\mathfrak{h},\varphi} = \{ (\theta (A), \varphi (A), A, 0) \mid A \in \mathfrak{h} \} \ltimes \mathbb{R}^{n} = $
		$$\left\{ \theta (A) \id_{\mathbb{R}^{1,n+1}} + \left.\begin{pmatrix}
		\varphi (A) &X^t & 0\\ 
		0 & A &-X \\ 
		0 & 0 & - \varphi (A) \\
		\end{pmatrix} \right| \ X\in \mathbb{R}^n,\
		A \in \mathfrak{h}\right\},
		$$
		where $\theta : \mathfrak{h} \rightarrow \mathbb{R}$ and $\varphi\colon\mathfrak{h}\to\mathbb{R}$ are non-zero linear maps such that $\theta \big|_{[\mathfrak{h}, \mathfrak{h}]} = 0$ and $\varphi\big|_{[\mathfrak{h}, \mathfrak{h}]}=0$, and $\mathfrak{h} \subset \mathfrak{so} (n)$ is the holonomy algebra of a Riemannian manifold.
	\end{itemize}
\end{theorem}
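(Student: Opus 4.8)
The plan is to prove both implications by first determining the linear-algebraic shape of $\mathfrak{g}$ from weak irreducibility and then cutting the list down with the Berger condition $L(\mathscr{R}(\mathfrak{g}))=\mathfrak{g}$, for which I rely on the spaces of algebraic curvature tensors computed in Sections~\ref{AuxSec} and~\ref{CurvTensorSec}.

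First I would reduce to the parabolic subalgebra. As $\mathfrak{g}$ is not irreducible it preserves a proper subspace, and weak irreducibility forbids this subspace from being non-degenerate; hence $\mathfrak{g}$ preserves a degenerate subspace $W$ and thus the isotropic line $W\cap W^{\perp}=\mathbb{R}p$, so that $\mathfrak{g}\subset\mathfrak{co}(1,n+1)_{\mathbb{R}p}$. The key point is that $\mathbb{R}\id_{\mathbb{R}^{1,n+1}}$ acts by homotheties and hence preserves every subspace, so a subspace is $\mathfrak{g}$-invariant if and only if it is invariant under $\tilde{\mathfrak{g}}:=\pr_{\mathfrak{so}(1,n+1)_{\mathbb{R}p}}\mathfrak{g}$. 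Therefore $\mathfrak{g}$ is weakly irreducible if and only if $\tilde{\mathfrak{g}}$ is, and Theorem~\ref{soClassTh} gives that $\tilde{\mathfrak{g}}$ is of one of the Types~1--4.

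Next I would describe $\mathfrak{g}$ over $\tilde{\mathfrak{g}}$. Let $\pi$ be the projection of $\mathfrak{co}(1,n+1)_{\mathbb{R}p}$ forgetting the scalar component. Either $\mathbb{R}\id\subset\mathfrak{g}$, and then $\mathfrak{g}=\mathbb{R}\id\oplus\tilde{\mathfrak{g}}$, or $\pi|_{\mathfrak{g}}$ is injective and $\mathfrak{g}=\{(\lambda(\xi),\xi):\xi\in\tilde{\mathfrak{g}}\}$ is the graph of a linear map $\lambda:\tilde{\mathfrak{g}}\to\mathbb{R}$; since $\id$ is central, closure under the bracket forces $\lambda|_{[\tilde{\mathfrak{g}},\tilde{\mathfrak{g}}]}=0$, and $\mathfrak{g}\not\subset\mathfrak{so}(1,n+1)$ forces $\lambda\neq0$. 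Computing $[\tilde{\mathfrak{g}},\tilde{\mathfrak{g}}]$ for each type then constrains $\lambda$: for Types~1 and~3 one has $\mathbb{R}^n\subset[\tilde{\mathfrak{g}},\tilde{\mathfrak{g}}]$, so $\lambda$ is independent of the translational part and has the asserted form $\alpha a+\theta(A)$, respectively $\theta(A)$; for Type~2 only $\mathfrak{h}\cdot\mathbb{R}^n$ lies in the commutator, so a priori $\lambda=\theta(A)+\mu(X)$ with $\mu$ possibly non-zero on the $\mathfrak{h}$-fixed vectors. I would also record that conjugation inside $\mathrm{CO}(1,n+1)_{\mathbb{R}p}$ fixes the scalar component of every element (because $\mathbb{R}\id$ is a central ideal), so the term $\mu$ can never be removed by a change of Witt basis; it must be killed by the Berger condition.

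The heart of the argument, and the main obstacle, is the Berger condition. Using the first prolongations of Section~\ref{AuxSec} and the structure of $\mathscr{R}(\mathfrak{g})$ from Section~\ref{CurvTensorSec}, I would argue in three steps. First, the $\mathfrak{so}(n)$-component of the restriction of any $R\in\mathscr{R}(\mathfrak{g})$ to $\mathbb{R}^n\wedge\mathbb{R}^n$ is an algebraic curvature tensor of type $\mathfrak{h}$; since the $\mathfrak{so}(n)$-projection of $\mathfrak{g}$ is $\mathfrak{h}$, the equality $L(\mathscr{R}(\mathfrak{g}))=\mathfrak{g}$ forces $L(\mathscr{R}(\mathfrak{h}))=\mathfrak{h}$, i.e.\ $\mathfrak{h}$ is a Berger, hence Riemannian holonomy, subalgebra of $\mathfrak{so}(n)$. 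Second, the Bianchi identity obstructs generating both the $\mu$-term of a Type~2 graph and the $\psi$-component of a Type~4 projection, which simultaneously forces $\mu=0$ and excludes Type~4 altogether. Third, in the Type~2 and Type~3 graphs $\theta=0$ would give $\mathfrak{g}\subset\mathfrak{so}(1,n+1)$, so $\theta\neq0$, while in the Type~1 graph the same hypothesis reads $\alpha^2+\theta^2\neq0$. This yields precisely the four families in the statement. For the converse I would, for each listed algebra, exhibit curvature tensors spanning it: start from tensors of type $\mathfrak{h}$ with $L(\mathscr{R}(\mathfrak{h}))=\mathfrak{h}$ (available because $\mathfrak{h}$ is a Riemannian holonomy algebra) and add the distinguished tensors of Section~\ref{CurvTensorSec} that produce the scalar, boost and translational directions. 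I expect the second step — the Bianchi obstruction that both kills $\mu$ and removes Type~4 — to be the delicate point, as it is exactly where the conformal problem diverges from the purely metric Lorentzian classification, where Type~4 does occur.
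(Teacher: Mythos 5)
Your skeleton coincides with the paper's own: reduction to $\mathfrak{co}(1,n+1)_{\mathbb{R}p}$, the dichotomy between $\mathfrak{g}=\mathbb{R}\id_{\mathbb{R}^{1,n+1}}\oplus\overline{\mathfrak{g}}$ and a graph of a linear map vanishing on the commutant (this is exactly Section~\ref{bCasesSec}), the commutant computations for Types 1--3, and then the Berger condition analysed through the curvature-tensor structure of Section~\ref{CurvTensorSec}. The problem is that the proposal stops exactly where the proof begins: the Bianchi-identity computations that exclude Type~4 and kill the term you call $\mu$ in the Type~2 graph are asserted, not performed, and you yourself flag them as the delicate point. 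In the paper these are concrete arguments: for Type~4 one invokes Theorem~\ref{RhTh} to get $A_0=Z_0=0$, then $R(p,V)=(0,0,0,-\mu V)$ for $0\neq V\in\mathbb{R}^{n-m}$ forces $\mu=0$, and a chase through the components $S$, $L$, $P$, $\gamma$ of Theorem~\ref{RcoTh} forces $\gamma=0$, so every curvature value lies in $\mathfrak{so}(1,n+1)$ and $\mathfrak{g}$ is not Berger; for the Type~2 graph one derives $\theta_2(X_0)=0$, $\gamma(U)=(U,X_0)$ and $\theta_2(L(U,V))=0$, and concludes $\gamma=0$ whenever $\theta_2\neq0$, with the same consequence. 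Likewise, the converse direction for the graph algebras is not a routine addition of ``distinguished tensors'': one must exhibit, for each $P\in\mathscr{P}(\mathfrak{h})$, a curvature tensor with $\gamma=\theta_2\circ P$ and $X_0$ dual to $\gamma$, verify that it satisfies the constraints of Theorems~\ref{RcoTh} and~\ref{RhTh}, and use Lemma~\ref{PRnLemma} (or the weak Berger property) to see that the resulting values generate the whole graph. None of this is in the sketch, so the classification is not established by it.

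There is also a logical flaw in your Step 1. Restricting $R\in\mathscr{R}(\mathfrak{g})$ to $\wedge^2\mathbb{R}^n$ indeed produces tensors $S\in\mathscr{R}(\mathfrak{h})$, but the $\mathfrak{so}(n)$-projection $\mathfrak{h}$ of $L(\mathscr{R}(\mathfrak{g}))$ is spanned by these images \emph{together with} the components $P(U)=\pr_{\mathfrak{so}(n)}R(U,q)$, which are weak curvature tensors $P\in\mathscr{P}(\mathfrak{h})$ and need not arise from any element of $\mathscr{R}(\mathfrak{h})$. Hence the Berger property of $\mathfrak{g}$ does not ``force $L(\mathscr{R}(\mathfrak{h}))=\mathfrak{h}$'' by projection; what it forces is that $\mathfrak{h}$ is spanned by images of elements of $\mathscr{R}(\mathfrak{h})\cup\mathscr{P}(\mathfrak{h})$, i.e.\ that $\mathfrak{h}$ is a weak Berger algebra, and one must then invoke Leistner's theorem identifying weak Berger algebras with Riemannian holonomy algebras --- this is precisely the content and the proof of the paper's Theorem~\ref{prsoTh}. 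Your conclusion ($\mathfrak{h}$ is a Riemannian holonomy algebra) is correct, but it is reached through this deeper result, not by the formal projection argument you give.
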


%The matrix forms of the last three algebras from Theorem \ref{class3Th} with respect to the basis\\ $p,e_1,\dots,e_{n},q$ are the following:
%$$\left\{ \left.\begin{pmatrix}
%(\alpha + 1) a + \theta (A) &X^t & 0\\ 
%0 & A + (\alpha a + \theta (A)) E_{n} &-X \\ 
%0 & 0 & (\alpha - 1) a + \theta (A) \\
%\end{pmatrix} \right| a\in \mathbb{R},\ X\in \mathbb{R}^n,\
%A \in \mathfrak{h}\right\},
%$$
%$$\left\{ \left.\begin{pmatrix}
%a &X^t & 0\\ 
%0 & A &-X \\ 
%0 & 0 & -a \\
%\end{pmatrix} + (\alpha a + \theta (A)) E_{n+2} \right| a\in \mathbb{R},\ X\in \mathbb{R}^n,\
%A \in \mathfrak{h}\right\},
%$$
%$$\left\{ \left.\begin{pmatrix}
%\theta (A) &X^t & 0\\ 
%0 & A + \theta (A) E_n &-X \\ 
%0 & 0 & \theta (A) \\
%\end{pmatrix} \right| \ X\in \mathbb{R}^n,\
%A \in \mathfrak{h}\right\},
%$$
%$$\left\{ \left.\begin{pmatrix}
%\theta (A) + \varphi (A) &X^t & 0\\ 
%0 & A + \theta (A) E_n &-X \\ 
%0 & 0 & \theta (A) - \varphi (A) \\
%\end{pmatrix} \right| \ X\in \mathbb{R}^n,\
%A \in \mathfrak{h}\right\}.
%$$

Below, in Section~\ref{RealizationnSec}, for each Berger algebra $\mathfrak{g}$, we construct a Weyl connection with the holonomy algebra isomorphic to $\mathfrak{g}$.

\begin{theorem}\label{theorem5}
	\label{realizationTh}
	Each algebra $\mathfrak{g} \subset \mathfrak{co} (1, n + 1)$ from Theorems~\ref{class2Th} and~\ref{class3Th} is the holonomy algebra of a Weyl connection. 
\end{theorem}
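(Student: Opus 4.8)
The plan is to prove the theorem by an explicit case\ndash by\ndash case construction: for every algebra $\mathfrak{g}$ listed in Theorems~\ref{class2Th} and~\ref{class3Th} I would exhibit a Weyl manifold $(M, c, \nabla)$ whose holonomy algebra equals $\mathfrak{g}$. Since a Weyl connection is fixed by the choice of a background metric $g \in c$ together with the associated $1$\ndash form $\omega = \omega_g$ through
\[
\nabla_X Y = \nabla^g_X Y - \omega(X) Y - \omega(Y) X + g(X,Y)\, \omega^\sharp,
\]
the data to be produced in each case is a pair $(g, \omega)$ on a suitable open subset of $\mathbb{R}^{n+2}$. The two inclusions $\mathfrak{hol}(\nabla) \subseteq \mathfrak{g}$ and $\mathfrak{hol}(\nabla) \supseteq \mathfrak{g}$ are then established separately, and the constant of the theorem that is genuinely used throughout is that in each item the subalgebra $\mathfrak{h}\subset\mathfrak{so}(n)$ (or $\mathfrak{k}$) is a \emph{Riemannian} holonomy algebra.

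For the constructions I would treat the two families differently. In the decomposable situation of Theorem~\ref{class2Th} I would take $g$ adapted to the orthogonal splitting $\mathbb{R}^{1,k+1}\oplus\mathbb{R}^{n-k}$, namely (a warped or twisted) product of a metric whose Levi\ndash Civita holonomy realizes the factor $\mathfrak{so}(1,k+1)$ (resp. $\mathfrak{k}\ltimes\mathbb{R}^{k}$) and a Riemannian metric realizing $\mathfrak{so}(n-k)$, and then choose $\omega$ compatible with the conformal\ndash product structure so as to contribute the central $\mathbb{R}\id$\ndash term (or the element $(0,-1,0,0)$) without destroying the parallel splitting. In the weakly irreducible situation of Theorem~\ref{class3Th} I would use a metric of generalized plane\ndash wave type in Witt coordinates $v, x^1,\dots,x^n, u$,
\[
g = 2\,du\,dv + h_{ij}(x)\,dx^i dx^j + 2 A_i(u,x)\,du\,dx^i + H(u,x)\,du^2,
\]
where $h = h_{ij}\,dx^i dx^j$ is a Riemannian metric with Levi\ndash Civita holonomy exactly $\mathfrak{h}$; this is precisely the point at which the Riemannian\ndash holonomy hypothesis on $\mathfrak{h}$ is invoked. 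The $\mathbb{R}\id$\ndash part of the holonomy is produced by taking $\omega$ non\ndash closed, since the trace of $R^\nabla(X,Y)$ equals $(n+2)$ times the $\mathbb{R}\id$\ndash coefficient, which in turn is governed by $d\omega$; the numerical data $\alpha$, $\theta$, $\varphi$ are encoded through the prescribed dependence of $\omega$, $A_i$ and $H$ on $u$.

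For the upper bound $\mathfrak{hol}(\nabla)\subseteq\mathfrak{g}$ it suffices to verify that the curvature operator $R^\nabla(X,Y)$ takes values in $\mathfrak{g}$ for all $X,Y$ and that the corresponding subbundle of $\mathfrak{co}(TM)$ is $\nabla$\ndash parallel; by the Ambrose--Singer description of holonomy as the span of parallel transports of curvature operators, both inclusions then force $\mathfrak{hol}(\nabla)$ into $\mathfrak{g}$. Concretely this amounts to checking that $\nabla$ preserves the defining structures: the non\ndash degenerate splitting in the first family, and the isotropic line $\mathbb{R}p$ together with the affine relations $a=\varphi(A)$, $b=\theta(A)+\alpha a$ tying together the $\mathbb{R}\id$\ndash, diagonal\ndash , $\mathfrak{h}$\ndash and translational components in the second.

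The real content, and where I expect the main obstacle, is the lower bound $\mathfrak{hol}(\nabla)\supseteq\mathfrak{g}$. Here I would use the infinitesimal Ambrose--Singer theorem, so that $\mathfrak{hol}(\nabla)$ contains all operators $(\nabla^k R^\nabla)(X,Y)$, and organize the computation through the spaces $\mathscr{R}(\mathfrak{g})$ of algebraic curvature tensors determined in Sections~\ref{AuxSec} and~\ref{CurvTensorSec}: by the Berger property $L(\mathscr{R}(\mathfrak{g})) = \mathfrak{g}$ there is a generating family of curvature tensors, and the free functions $h_{ij}, A_i, H$ and the form $\omega$ must be chosen so that $R^\nabla$ together with finitely many covariant derivatives realizes such a family. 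The two delicate points I anticipate are (i) producing the translational $\mathbb{R}^{n}$\ndash part of $\mathfrak{g}$, which typically emerges only after one further differentiation of the curvature in the $x$\ndash directions, and (ii) matching exactly the couplings imposed by $\theta$, $\varphi$ and $\alpha$, so that the holonomy lands on the prescribed one\ndash parameter combination inside $\mathfrak{co}(1,n+1)$ rather than on a strictly larger subalgebra.
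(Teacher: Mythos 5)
Your plan for Theorem~\ref{class2Th} is broadly the paper's (conformal products for the first algebra, Walker\ndash type product metrics with a factor of holonomy $\mathfrak{k}$ for the others, the paper additionally saving work by invoking the classification itself to identify the holonomy). But for Theorem~\ref{class3Th} your central construction choice creates a genuine gap: you take the screen metric $h$ to have Levi\ndash Civita holonomy \emph{exactly} $\mathfrak{h}$, and with that choice the upper bound $\mathfrak{hol}(\nabla)\subseteq\mathfrak{g}$ fails for every algebra with a nontrivial coupling, i.e.\ for $\mathbb{R}\id_{\mathbb{R}^{1,n+1}}\oplus\mathfrak{g}^{3,\mathfrak{h},\varphi}$, $\mathfrak{g}^{\alpha,\theta,1,\mathfrak{h}}$ (with $\theta\neq0$), $\mathfrak{g}^{\theta,2,\mathfrak{h}}$ and $\mathfrak{g}^{\theta,3,\mathfrak{h},\varphi}$ --- precisely the new algebras of this classification. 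The reason is algebraic, not computational: by Theorem~\ref{RhTh}, any $R\in\mathscr{R}(\mathfrak{g})$ whose $\mathfrak{so}(n)$\ndash projection $\mathfrak{h}$ is proper has $A_0=0$ and $\tau=0$, so the $\id$\ndash and $p\wedge q$\ndash components of $R(U,V)$ vanish for all screen vectors $U,V$; consequently $R(U,V)\in\mathfrak{g}$ forces its $\mathfrak{so}(n)$\ndash block $S(U,V)$ to lie in $\ker\theta\cap\ker\varphi$, a proper subspace of $\mathfrak{h}$ containing $[\mathfrak{h},\mathfrak{h}]$. For your metric the $\mathfrak{so}(n)$\ndash block of $R^{\nabla}(U,V)$ is essentially $R^{h}(U,V)$, and if $h$ has holonomy $\mathfrak{h}$ then by Ambrose--Singer these values generate all of $\mathfrak{h}$, so some of them violate the constraint; the holonomy of your Weyl connection is then strictly larger than $\mathfrak{g}$. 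In short, for the coupled algebras a screen metric with holonomy $\mathfrak{h}$ is exactly the wrong choice, and no choice of $\omega=f\,du$, $A_i$, $H$ in your ansatz can repair this, because the obstruction sits in the components $R(U,V)$ that these data do not influence.

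The missing idea, which is how the paper proceeds, is to take $h$ \emph{flat} and to encode $\mathfrak{h}$ in the mixed terms: $A_i=\tfrac13(P^i_{jk}+P^i_{kj})x^jx^k$ for a weak curvature tensor $P\in\mathscr{P}(\mathfrak{h})$ whose image generates $\mathfrak{h}$ as a Lie algebra (Lemma~\ref{PRnLemma}); this, via Leistner's weak\ndash Berger theory, is where the hypothesis that $\mathfrak{h}$ is a Riemannian holonomy algebra actually enters --- not through a metric with holonomy $\mathfrak{h}$. With this choice the $\mathfrak{h}$\ndash part of the curvature appears only in the components $R(\partial_i,\partial_u)$, whose scalar parts \emph{can} be tuned: putting $\theta_i=\theta(P(e_i))$, $\varphi_i=\varphi(P(e_i))$ into $f$ (e.g.\ $f=\sum_i\theta_ix^i$) and into $H$ (e.g.\ terms $2\sum_i(\theta_i+\varphi_i)x^iv$) produces curvature values of the form $(\theta(P_i),\varphi(P_i),P_i,\ast)$, which realize the prescribed couplings; the translational part $\mathbb{R}^n$ then already comes from curvature values themselves (Lemma~\ref{RnHolLemma}), not from higher derivatives as you anticipated, while covariant derivatives such as $\nabla_v R(\partial_v,\partial_u)$ serve to separate the $\id$\ndash and $p\wedge q$\ndash directions. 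Your general framework for the two inclusions (analyticity plus covariant derivatives of curvature, and the verification that all such derivatives stay in $\mathfrak{g}$) agrees with the paper; it is the choice of metric that must be replaced.
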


Thus, for non-closed conformal structures of Lorentzian signature we obtained a complete classification of holonomy algebras. These algebras are exhausted by the Lie algebra $\mathfrak{co} (1, n + 1)$ and subalgebras $\mathfrak{g} \subset \mathfrak{co} (1, n + 1)$ from Theorems~\ref{class2Th} and~\ref{class3Th}.

\section{Weakly irreducible subalgebras of $\mathfrak{co} (1, n + 1)_{\mathbb{R} p}$}
\label{bCasesSec}

We will consider the obvious projection
$$\pr_{\mathfrak{so} (1, n + 1)}:\mathfrak{co} (1, n + 1)=\mathbb{R}\oplus\mathfrak{so} (1, n + 1)\to\mathfrak{so} (1, n + 1),$$
which is a homomorphism of the Lie algebras.
For a subalgebra $\mathfrak{g} \subset \mathfrak{co} (1, n + 1)$, denote by $\overline{\mathfrak{g}}$ its projection to $\mathfrak{so} (1, n + 1)$.   It is obvious that $\mathfrak{g}$ is weakly irreducible if and only if $\overline{\mathfrak{g}}$ is weakly irreducible. It is clear that if $\mathfrak{g} $ is weakly irreducible and is not contained in $\mathfrak{so} (1, n + 1)$, then only the following two situations are possible:
\begin{itemize}		
	\item[($a$)]    $\mathfrak{g}$ contains $\mathbb{R} \id_{\mathbb{R}^{1,n+1}}$, in this case    $$\mathfrak{g} = \mathbb{R} \id_{\mathbb{R}^{1,n+1}} \oplus \overline{\mathfrak{g}};$$
	\item[($b$)] $\mathfrak{g}$ does not contain $\mathbb{R}\id_{\mathbb{R}^{1,n+1}}$ and its projection to $\mathbb{R}\id_{\mathbb{R}^{1,n+1}}$ equals $\mathbb{R}\id_{\mathbb{R}^{1,n+1}}$, then
	$$\mathfrak{g} = \{ \theta (B) \id_{\mathbb{R}^{1,n+1}} + B \mid B \in \overline{\mathfrak{g}} \},$$ where $\theta : \overline{\mathfrak{g}} \rightarrow \mathbb{R} $ is a non-zero linear map. 
	It holds $$[\theta (B_1) \id_{\mathbb{R}^{1,n+1}} + B_1,\theta (B_2) \id_{\mathbb{R}^{1,n+1}} + B_2]=[B_1,B_2]\in\mathfrak{g}.$$
	This implies that $\theta$ is a homomorphism of the Lie algebras, and  it holds $\theta \big|_{[\overline{\mathfrak{g}}, \overline{\mathfrak{g}}]} = 0$.
\end{itemize}	

Consider the case ($b$) in more details. Now we describe the map $\theta$ for each Lie algebra from Theorem~\ref{soClassTh}.

\textbf{Case b.1.}
Suppose that $\overline{\mathfrak{g}} = \mathfrak{g}^{1,\mathfrak{h}} = (\mathbb{R} \oplus \mathfrak{h}) \ltimes \mathbb{R}^n$, where $\mathfrak{h} \subset \mathfrak{so} (n)$. 
From~\eqref{soBrackets} it follows that $[\overline{\mathfrak{g}}, \overline{\mathfrak{g}}] = [\mathfrak{h}, \mathfrak{h}] \ltimes \mathbb{R}^n$. Thus we obtain the Lie algebra $$\mathfrak{g} = \{ (\theta (a, A), a, A, 0) \mid a \in \mathbb{R}, A \in \mathfrak{h} \} \ltimes \mathbb{R}^{n},$$ 
where $\theta : \mathbb{R} \oplus \mathfrak{h} \rightarrow \mathbb{R}$ is a non-zero linear map such that $\theta \big|_{[\mathfrak{h}, \mathfrak{h}]} = 0$.

\textbf{Case b.2.}
Let $ \overline{\mathfrak{g}} = \mathfrak{g}^{2,\mathfrak{h}} = \mathfrak{h} \ltimes \mathbb{R}^n$. 
Consider the $\mathfrak{h}$\ndash invariant orthogonal decomposition 
$$\mathbb{R}^{n} = \mathbb{R}^{n_0} \oplus \mathbb{R}^{n_1} \oplus \ldots \oplus \mathbb{R}^{n_r}$$ 
such that $\mathfrak{h}$ acts trivially on $\mathbb{R}^{n_0}$, and $\mathbb{R}^{n_\alpha}$, $\alpha = 1, \ldots , r$, are $\mathfrak{h}$\ndash invariant and the induced representations in them are irreducible. 
Thus, $[\overline{\mathfrak{g}}, \overline{\mathfrak{g}}] = [\mathfrak{h}, \mathfrak{h}] \ltimes (\mathbb{R}^{n_0})^{\perp}$, where $(\mathbb{R}^{n_0})^{\perp} = \mathbb{R}^{n_1} \oplus \ldots \oplus \mathbb{R}^{n_r}$. 
We get
$$\mathfrak{g} = \{ (\theta (X, A), 0, A, X) \mid X \in \mathbb{R}^{n_0}, A \in \mathfrak{h} \} \ltimes (\mathbb{R}^{n_0})^{\perp} ,$$ 
where $\theta : \mathbb{R}^{n_0} \oplus \mathfrak{h} \rightarrow \mathbb{R}$ is a non-zero linear map such that $\theta \big|_{[\mathfrak{h}, \mathfrak{h}]} = 0$.

\textbf{Case b.3.} 
If $\overline{\mathfrak{g}} = \mathfrak{g}^{3,\mathfrak{h},\varphi}$, then as in the case \rm{b.1}, we get
$$\mathfrak{g} = \{ (\theta (A), \varphi (A), A, 0) \mid A \in \mathfrak{h} \} \ltimes \mathbb{R}^{n},$$ 
where $\theta : \mathfrak{h} \rightarrow \mathbb{R}$ is a non-zero linear map such that $\theta \big|_{[\mathfrak{h}, \mathfrak{h}]} = 0$.

\textbf{Case b.4.} 
We will not use the Lie algebra with $\overline{\mathfrak{g}} = \mathfrak{g}^{4,\mathfrak{h}, m, \psi}$, by that reason we will not describe~it.

\section{Auxiliary results}
\label{AuxSec}

Let $\mathbb{R}^{r, s}$ be a pseudo-Euclidean space. We will use the standard isomorphism  $\wedge^2 \mathbb{R}^{r, s}\cong\mathfrak{so} (r, s)$ of the $\mathfrak{so} (r, s)$-modules: $$(X \wedge Y) Z = (X, Z) Y - (Y, Z) X.$$

\begin{definition}
	For a subalgebra $\mathfrak{g} \subset \mathfrak{gl}(n, \mathbb{R})$ its first prolongation is defined as:
	\begin{equation*}
	\mathfrak{g}^{(1)} := \{ \varphi \in \Hom (\mathbb{R}^n, \mathfrak{g}) \mid \varphi (X) Y = \varphi (Y) X \}.
	\end{equation*}
\end{definition}

We are interested in the first prolongations, since below we will see that they are tightly related to algebraic curvature tensors. 
It is well known that $(\mathfrak{so} (r, s))^{(1)} = 0$ and 
\begin{equation}
\label{1prolongEq}
\mathfrak{co} (r, s)^{(1)} = \{ Z \wedge \cdot + (Z, \cdot) \id_{\mathbb{R}^{r,s}} \mid Z \in \mathbb{R}^{r, s} \} \cong \mathbb{R}^{r, s}.
\end{equation}

\begin{lemma}
\label{h1Lemma}
	Let $\mathfrak{h} $ be a subalgebra of $\mathfrak{so} (r, s)$ satisfying one of the following conditions:
	\begin{itemize}
		\item[$\bullet$] $\mathfrak{h} $ is a proper irreducible subalgebra of $\mathfrak{so} (r, s)$;
		\item[$\bullet$] $\mathfrak{h} $ preserves a proper non-degenerate subspace of  $\mathbb{R}^{r, s}$;
		\item[$\bullet$] $\mathfrak{h} $ is an arbitrary proper subalgebra of $\mathfrak{so} (n)$.
	\end{itemize}
	Then  it holds ${ (\mathfrak{h} \oplus \mathbb{R} \id_{\mathbb{R}^{r,s}})^{(1)} = 0 }$.
\end{lemma}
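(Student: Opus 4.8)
The plan is to reduce everything to the known formula \eqref{1prolongEq} for $\mathfrak{co}(r,s)^{(1)}$. The key observation is that since $\mathfrak{h} \oplus \mathbb{R}\id \subset \mathfrak{co}(r,s)$, we have the inclusion $(\mathfrak{h}\oplus\mathbb{R}\id)^{(1)}\subset \mathfrak{co}(r,s)^{(1)}$: any $\varphi$ in the smaller prolongation is in particular a symmetric $\Hom(\mathbb{R}^{r,s},\mathfrak{co}(r,s))$-valued map, hence of the form $\varphi(\cdot)=Z\wedge\cdot+(Z,\cdot)\id$ for a \emph{unique} $Z\in\mathbb{R}^{r,s}$. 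So the whole problem becomes: for which $Z$ does this $\varphi$ actually take values in $\mathfrak{h}\oplus\mathbb{R}\id$, rather than merely in $\mathfrak{co}(r,s)$? First I would show that $\varphi(X)\in\mathfrak{h}\oplus\mathbb{R}\id$ for all $X$ forces $Z=0$, which gives $\varphi=0$ and proves the lemma.

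The condition $\varphi(X)\in\mathfrak{h}\oplus\mathbb{R}\id$ means the $\mathfrak{so}(r,s)$-component $Z\wedge X$ lies in $\mathfrak{h}$ for every $X\in\mathbb{R}^{r,s}$ (the $\mathbb{R}\id$-part is automatically absorbed). Thus the real content is the purely algebraic claim: \emph{if $Z\wedge X\in\mathfrak{h}$ for all $X$, then $Z=0$} under each of the three hypotheses. I would treat the vanishing of $Z$ as the heart of the argument and verify it hypothesis by hypothesis. The most transparent case is the third: if $\mathfrak{h}\subset\mathfrak{so}(n)$ is a proper subalgebra of the \emph{definite} orthogonal algebra, pick any $Z\neq 0$; then for a vector $X$ not proportional to $Z$ the bivector $Z\wedge X$ is a nonzero element of $\mathfrak{so}(n)$, and letting $X$ range over all such vectors the span of $\{Z\wedge X\}$ is all of $\mathfrak{so}(n)$ (these are exactly the rotations fixing $Z^{\perp}\cap X^{\perp}$-type planes), contradicting properness of $\mathfrak{h}$. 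Hence $Z=0$.

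For the first two hypotheses the same mechanism applies but with care about the indefinite metric. In the irreducible case, $\{Z\wedge X\mid X\in\mathbb{R}^{r,s}\}\subset\mathfrak{h}$ would exhibit in $\mathfrak{h}$ a distinguished family of bivectors all sharing the factor $Z$; I would argue that the $\mathfrak{h}$-submodule they generate, together with irreducibility of $\mathfrak{h}$ and the fact that $\mathfrak{so}(r,s)$ has trivial prolongation, is incompatible with $\mathfrak{h}$ being proper unless $Z=0$. In the reducible case, where $\mathfrak{h}$ preserves a non-degenerate decomposition $\mathbb{R}^{r,s}=V\oplus V^{\perp}$, I would decompose $Z=Z_V+Z_{V^\perp}$ and test against $X\in V$ and $X\in V^{\perp}$: the mixed bivectors $Z_V\wedge X_{V^\perp}$ and $Z_{V^\perp}\wedge X_V$ land in $\Hom(V,V^{\perp})\oplus\Hom(V^{\perp},V)$, which $\mathfrak{h}\subset\mathfrak{so}(V)\oplus\mathfrak{so}(V^\perp)$ does not meet; forcing these to vanish for all $X$ gives $Z_V=Z_{V^\perp}=0$, i.e. $Z=0$.

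The main obstacle I anticipate is the indefinite-signature bookkeeping in the first two cases: unlike the definite case, a nonzero $Z$ may be isotropic, so $Z\wedge X$ can behave degenerately and the naive ``span is everything'' argument needs the explicit identification $(X\wedge Y)Z=(X,Z)Y-(Y,Z)X$ to control ranks and images. I expect the cleanest route is to exploit that $(\mathfrak{so}(r,s))^{(1)}=0$: the projection of $\varphi$ to the $\mathfrak{so}(r,s)$-valued part is itself a symmetric $\mathfrak{so}(r,s)$-valued prolongation element, hence vanishes, which immediately kills $Z\wedge\cdot$ and leaves only the $\mathbb{R}\id$ part $(Z,\cdot)\id$; symmetry of that part together with $\varphi(X)\in\mathfrak{h}\oplus\mathbb{R}\id$ then pins down $Z=0$ uniformly across all three cases.
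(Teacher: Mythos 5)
Your reduction of the lemma to the question ``for which $Z$ does $Z\wedge X\in\mathfrak{h}$ hold for all $X$?'' is exactly the paper's starting point, and your second case (invariant non-degenerate subspace) is correct and essentially the paper's own argument: test $X\in V$ and $X\in V^{\perp}$, observe that the mixed components must vanish, conclude $Z=0$. But the rest has genuine gaps. Most seriously, the ``cleanest route'' in your final paragraph is false: the $\mathfrak{so}(r,s)$-component $X\mapsto Z\wedge X$ of $\varphi_Z\in\mathfrak{co}(r,s)^{(1)}$ is \emph{not} an element of $\mathfrak{so}(r,s)^{(1)}$, because the symmetry condition $\varphi(X)Y=\varphi(Y)X$ is an identity between vectors of $\mathbb{R}^{r,s}$ and does not split along the decomposition $\mathfrak{co}(r,s)=\mathbb{R}\id\oplus\mathfrak{so}(r,s)$; indeed $(Z\wedge X)Y-(Z\wedge Y)X=(Z,Y)X-(Z,X)Y\neq 0$ in general, and it is precisely the scalar term $(Z,\cdot)\id$ that restores the symmetry. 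If your argument were valid, applying it with $\mathfrak{h}=\mathfrak{so}(r,s)$ would give $\mathfrak{co}(r,s)^{(1)}=0$, contradicting \eqref{1prolongEq}, which you yourself invoke. With that route gone, your irreducible case has no actual proof: the sketch (``I would argue that \dots is incompatible'') points at an $\mathfrak{h}$-submodule of bivectors inside $\mathfrak{so}(r,s)$, where irreducibility of $\mathfrak{h}$ on $\mathbb{R}^{r,s}$ gives no leverage. The paper's argument is short and different: under the identification $\mathfrak{co}(r,s)^{(1)}\cong\mathbb{R}^{r,s}$, the space $(\mathfrak{h}\oplus\mathbb{R}\id)^{(1)}$ of admissible $Z$'s is an $\mathfrak{h}$-submodule of $\mathbb{R}^{r,s}$, hence by irreducibility it is $0$ or all of $\mathbb{R}^{r,s}$; in the latter case $\mathfrak{h}$ contains all bivectors $Z\wedge X$, which span $\mathfrak{so}(r,s)$, contradicting properness.

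Your third case also contains an error, though a repairable one. For fixed $Z\neq 0$ the set $\{Z\wedge X\mid X\in\mathbb{R}^n\}$ spans only the $(n-1)$-dimensional subspace $Z\wedge\mathbb{R}^n\subset\mathfrak{so}(n)$, not all of $\mathfrak{so}(n)$ (e.g.\ $e_2\wedge e_3\notin e_1\wedge\mathbb{R}^3$). What is true is that this subspace \emph{generates} $\mathfrak{so}(n)$ as a Lie algebra, since $[Z\wedge X,Z\wedge Y]=|Z|^2\,X\wedge Y$ for $X,Y\perp Z$; because $\mathfrak{h}$ is a subalgebra, this still contradicts properness, so the definite case can be saved this way. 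The paper avoids the computation altogether: any subalgebra of $\mathfrak{so}(n)$ is either irreducible or preserves a proper subspace, which in definite signature is automatically non-degenerate, so the third case reduces to the first two.
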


\begin{proof}
	Let $\mathfrak{h} \subset \mathfrak{so} (r, s)$ be an irreducible subalgebra. Suppose that $(\mathfrak{h} \oplus \mathbb{R} \id_{\mathbb{R}^{r,s}})^{(1)} \neq 0$. Since 
	$$(\mathfrak{h} \oplus \mathbb{R} \id_{\mathbb{R}^{r,s}})^{(1)} \subset \mathfrak{co} (r, s)^{(1)} \cong \mathbb{R}^{r, s}$$ 
	is an $\mathfrak{h}$-submodule, then $(\mathfrak{h} \oplus \mathbb{R} \id_{\mathbb{R}^{r,s}})^{(1)} = \mathbb{R}^{r, s}$. From~\eqref{1prolongEq} it follows that $\mathfrak{h} = \mathfrak{so} (r, s)$.
	
	Now suppose that $\mathfrak{h} \subset \mathfrak{so} (r,s)$ preserves a proper non-degenerate subspace $V_1\subset\mathbb{R}^{r,s}$. Then $\mathfrak{h}$ preserves the orthogonal decomposition $$\mathbb{R}^{r,s} = V_1 \oplus V_2,\quad V_2=V_1^\bot.$$ Let $\varphi \in (\mathfrak{h} \oplus \mathbb{R} \id_{\mathbb{R}^{r,s}})^{(1)}$. From~\eqref{1prolongEq} it follows that there exists an element $Z \in \mathbb{R}^{r,s}$ such that
	$$\varphi (X) = Z \wedge X + (Z, X) \id_{\mathbb{R}^{r,s}} $$
	for all $X \in \mathbb{R}^{r,s}$. Let $X \in V_1$ be non-zero. Since $\varphi (X) \in \mathfrak{h} \oplus \mathbb{R} \id_{\mathbb{R}^{r,s}}$, we get $Z\in V_1$. Similarly, taking non-zero $X\in V_2$, we get $Z \in V_2$. This shows that $Z = 0$. 
	
	Finally, any subalgebra $\mathfrak{h}\subset\mathfrak{so}(n)$ is either irreducible, or it preserves a proper  subspace of $\mathbb{R}^n$, which is always non-degenerate.
\end{proof}

\begin{lemma}
\label{coRp1Lemma}
	It holds
	$$(\mathfrak{co} (1, n + 1)_{\mathbb{R} p})^{(1)} = \mathbb{R} (p \wedge \cdot + (p, \cdot) \id_{\mathbb{R}^{1,n+1}}) \cong \mathbb{R} p.$$
\end{lemma}

\begin{proof}
	As we have seen, 
	$$\mathfrak{co} (1, n + 1) ^{(1)} = \{\varphi_Z = Z \wedge \cdot + (Z, \cdot) \id_{\mathbb{R}^{1,n+1}} \mid Z \in \mathbb{R}^{1, n + 1} \}.$$ 
	The Lie algebra $\mathfrak{so} (1, n + 1)_{\mathbb{R} p}$ is spanned by the elements $p \wedge q$, $p \wedge X$ and $X \wedge Y$, where $X, Y \in \mathbb{R}^{n}$. Let $\varphi_Z \in (\mathfrak{so} (1, n + 1) \oplus \mathbb{R} \id_{\mathbb{R}^{1,n+1}} )^{(1)}$ and $X \in \mathbb{R}^{n}$. Then
	\begin{gather*}
	\varphi_Z (X) = Z \wedge X + (Z, X) \id_{\mathbb{R}^{1,n+1}}, \\
	\varphi_Z (q )= Z \wedge q + (Z, q) \id_{\mathbb{R}^{1,n+1}}.
	\end{gather*}
	From the first equation it follows that $Z \in \langle p, e_1, \ldots, e_n\rangle$. From this and the second equation it follows that $Z \in \mathbb{R} p$.
\end{proof}

\begin{corollary}
\label{soRp1Cor}
	Let $\mathfrak{f} \subset \mathfrak{co} (1, n + 1)_{\mathbb{R} p}$ be a subalgebra, then $\mathfrak{f}^{(1)} \neq 0$ if and only if \newline
	$\mathbb{R} (p \wedge q + \id_{\mathbb{R}^{1,n+1}}) \oplus \mathbb{R}^{n} \subset \mathfrak{f}.$
\end{corollary}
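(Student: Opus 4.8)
The plan is to leverage the monotonicity of the first prolongation together with Lemma~\ref{coRp1Lemma}. Since $\mathfrak{f} \subset \mathfrak{co}(1,n+1)_{\mathbb{R}p}$, the inclusion $\Hom(\mathbb{R}^{1,n+1}, \mathfrak{f}) \subset \Hom(\mathbb{R}^{1,n+1}, \mathfrak{co}(1,n+1)_{\mathbb{R}p})$ together with the fact that the symmetry condition $\varphi(X)Y = \varphi(Y)X$ defining the prolongation does not depend on the target algebra gives $\mathfrak{f}^{(1)} \subset (\mathfrak{co}(1,n+1)_{\mathbb{R}p})^{(1)}$. By Lemma~\ref{coRp1Lemma} the latter is the one-dimensional space $\mathbb{R}\varphi_p$, where $\varphi_p = p \wedge \cdot + (p, \cdot)\id_{\mathbb{R}^{1,n+1}}$. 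Consequently $\mathfrak{f}^{(1)} \neq 0$ if and only if $\varphi_p \in \mathfrak{f}^{(1)}$.

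Next I would observe that, since $\varphi_p$ already lies in $\mathfrak{co}(1,n+1)^{(1)}$, it automatically satisfies the symmetry condition; hence $\varphi_p \in \mathfrak{f}^{(1)}$ is equivalent to $\im\varphi_p \subset \mathfrak{f}$. The core computation is then to evaluate $\varphi_p$ on the Witt basis. Using $(X \wedge Y)Z = (X,Z)Y - (Y,Z)X$ one gets $\varphi_p(p) = 0$, $\varphi_p(e_i) = p \wedge e_i$ for $i = 1, \ldots, n$, and $\varphi_p(q) = p \wedge q + \id_{\mathbb{R}^{1,n+1}}$. Therefore $\im\varphi_p = \Span\{p \wedge e_1, \ldots, p \wedge e_n\} \oplus \mathbb{R}(p \wedge q + \id_{\mathbb{R}^{1,n+1}})$.

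It remains to identify these generators inside $\mathfrak{co}(1,n+1)_{\mathbb{R}p}$. A direct check in the matrix model shows that the vectors $p \wedge e_i$ span exactly the nilpotent radical $\mathbb{R}^n$ of $\mathfrak{so}(1,n+1)_{\mathbb{R}p}$ (each $p \wedge e_i$ equals $-(0,0,e_i)$), while $p \wedge q = (0,-1,0,0)$, so that $p \wedge q + \id_{\mathbb{R}^{1,n+1}}$ is precisely the generator appearing in the statement. Combining these identifications, $\im\varphi_p = \mathbb{R}(p \wedge q + \id_{\mathbb{R}^{1,n+1}}) \oplus \mathbb{R}^n$, and the equivalence $\mathfrak{f}^{(1)} \neq 0 \iff \mathbb{R}(p \wedge q + \id_{\mathbb{R}^{1,n+1}}) \oplus \mathbb{R}^n \subset \mathfrak{f}$ follows immediately. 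The only real obstacle is this last bookkeeping step matching the Witt\ndash basis expressions $p \wedge e_i$ and $p \wedge q$ to the $(b,a,A,X)$ coordinates; everything else is a formal consequence of Lemma~\ref{coRp1Lemma} and the one\ndash dimensionality of the ambient prolongation.
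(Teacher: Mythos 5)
Your proof is correct and follows essentially the same route as the paper: both reduce the statement to Lemma~\ref{coRp1Lemma}, use the one-dimensionality of $(\mathfrak{co}(1,n+1)_{\mathbb{R}p})^{(1)} = \mathbb{R}\varphi_p$ to conclude that $\mathfrak{f}^{(1)} \neq 0$ forces $\varphi_p \in \mathfrak{f}^{(1)}$, and then translate this into the containment $\im\varphi_p = \mathbb{R}(p \wedge q + \id_{\mathbb{R}^{1,n+1}}) \oplus \mathbb{R}^n \subset \mathfrak{f}$. The only difference is that you carry out explicitly the evaluation of $\varphi_p$ on the Witt basis and the identification $p \wedge e_i = -(0,0,e_i)$, $p \wedge q = (0,-1,0,0)$, which the paper asserts without computation.
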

\begin{proof}
	Consider a subalgebra $\mathfrak{f} \subset \mathfrak{co} (1, k + 1)_{\mathbb{R} p}$ and assume that $\mathfrak{f}^{(1)} \neq 0$. From the previous lemma it follows that   $\mathfrak{f}^{(1)} = (\mathfrak{co} (1, n + 1)_{\mathbb{R} p})^{(1)}$, and this space is one-dimensional. The image of the basis element of this space coincides with  $\mathbb{R} (p \wedge q + \id_{\mathbb{R}^{1,n+1}}) \oplus \mathbb{R}^{n}$ and it has to be contained in $\mathfrak{f}$. Conversely, if   	$\mathbb{R} (p \wedge q + \id_{\mathbb{R}^{1,n+1}}) \oplus \mathbb{R}^{n} \subset \mathfrak{f},$ then $\mathfrak{f}^{(1)} = (\mathfrak{co} (1, n + 1)_{\mathbb{R} p})^{(1)}$.
\end{proof}

\begin{theorem}
	\label{ssrTh}
	Let $V$ be a pseudo-Euclidean space with a non-trivial orthogonal decomposition $$V = V_1 \oplus V_2.$$ Then the following relation is true:
	\begin{equation*}
	\mathscr{R} (\mathfrak{so} (V_1) \oplus \mathfrak{so} (V_2) \oplus \mathbb{R} \id_V) \cong \mathscr{R} (\mathfrak{so} (V_1)) \oplus \mathscr{R} (\mathfrak{so} (V_2)) \oplus V_1 \otimes V_2.
	\end{equation*}	
\end{theorem}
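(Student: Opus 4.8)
The plan is to compute $\mathscr{R}(\mathfrak{g})$ directly for $\mathfrak{g} = \mathfrak{so}(V_1)\oplus\mathfrak{so}(V_2)\oplus\mathbb{R}\id_V$ by decomposing an algebraic curvature tensor along the splitting and reading off the Bianchi identity block by block. Using $\Lambda^2 V = \Lambda^2 V_1\oplus(V_1\otimes V_2)\oplus\Lambda^2 V_2$ for the source and $\mathfrak{g} = \mathfrak{so}(V_1)\oplus\mathfrak{so}(V_2)\oplus\mathbb{R}\id_V$ for the target, I would write any $R\in\mathscr{R}(\mathfrak{g})$ as $R(X,Y) = R_1(X,Y) + R_2(X,Y) + f(X,Y)\id_V$, where $R_i(X,Y)\in\mathfrak{so}(V_i)$ denotes the $\mathfrak{so}(V_i)$-component for arbitrary $X,Y$ and $f\in\Lambda^2 V^*$. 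First I would note that the Bianchi expression $b(X,Y,Z):=R(X,Y)Z+R(Y,Z)X+R(Z,X)Y$ is totally antisymmetric, so it suffices to test it on the four summands of $\Lambda^3 V = \Lambda^3 V_1\oplus(\Lambda^2 V_1\otimes V_2)\oplus(V_1\otimes\Lambda^2 V_2)\oplus\Lambda^3 V_2$ and on each to separate the $V_1$- and $V_2$-components.

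The key simplification comes from the mixed summands. Testing $b$ on $\Lambda^2 V_1\otimes V_2$ and projecting to $V_2$ yields, for $X,Y\in V_1$ and $Z\in V_2$, the identity $R_2(X,Y)Z + f(X,Y)Z = 0$; since $R_2(X,Y)$ is skew-symmetric on $V_2$ while $f(X,Y)\id_{V_2}$ is symmetric, both must vanish. Hence $f|_{\Lambda^2 V_1} = 0$ and the $\mathfrak{so}(V_2)$-component of $R$ on $\Lambda^2 V_1$ is zero; symmetrically $f|_{\Lambda^2 V_2} = 0$ and the $\mathfrak{so}(V_1)$-component of $R$ on $\Lambda^2 V_2$ vanishes. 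With $f$ gone on the diagonal blocks, the $V_1$-projection of $b$ on $\Lambda^3 V_1$ reduces to the ordinary Bianchi identity for $R_1|_{\Lambda^2 V_1}$, so this restriction lies in $\mathscr{R}(\mathfrak{so}(V_1))$, and likewise $R_2|_{\Lambda^2 V_2}\in\mathscr{R}(\mathfrak{so}(V_2))$.

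It then remains to analyze the mixed data: the $\mathfrak{so}(V_1)$- and $\mathfrak{so}(V_2)$-components of $R$ on $V_1\otimes V_2$ together with $g := f|_{V_1\otimes V_2}$. Testing $b$ on $\Lambda^2 V_1\otimes V_2$ and projecting to $V_1$ gives, for fixed $Z\in V_2$, that the map $\phi_Z(Y) := R_1(Y,Z) + f(Y,Z)\id_{V_1}$ satisfies $\phi_Z(Y)X = \phi_Z(X)Y$, that is, $\phi_Z\in\mathfrak{co}(V_1)^{(1)}$. By \eqref{1prolongEq} there is $W_Z\in V_1$ with $R_1(Y,Z) = W_Z\wedge Y$ and $f(Y,Z) = (W_Z,Y)$, so both the $\mathfrak{so}(V_1)$-mixed part and $g$ are governed by the linear map $Z\mapsto W_Z$ in $\Hom(V_2,V_1)\cong V_1\otimes V_2$. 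The symmetric computation on $V_1\otimes\Lambda^2 V_2$ expresses the $\mathfrak{so}(V_2)$-mixed part and the same $g$ through a map $V_1\to V_2$; comparing the two formulas for $g$ forces these maps to be mutually adjoint with respect to the metrics, so all mixed data is controlled by the single element $g\in(V_1\otimes V_2)^*\cong V_1\otimes V_2$.

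Finally I would assemble the linear map $R\mapsto (R_1|_{\Lambda^2 V_1},\, R_2|_{\Lambda^2 V_2},\, g)$ into $\mathscr{R}(\mathfrak{so}(V_1))\oplus\mathscr{R}(\mathfrak{so}(V_2))\oplus V_1\otimes V_2$ and check it is an isomorphism: injectivity is immediate since every component of $R$ either appears in the image or was shown to vanish, and surjectivity follows by reversing the construction, the prolongation identity guaranteeing that the reconstructed $R$ satisfies Bianchi on the mixed summands while the two diagonal parts satisfy it on $\Lambda^3 V_i$. The main obstacle I anticipate is the mixed-summand analysis: one must recognize the Bianchi constraint there as exactly the first-prolongation condition for $\mathfrak{co}(V_i)$ and then verify that a single tensor $g\in V_1\otimes V_2$, rather than two independent ones, simultaneously controls both off-diagonal $\mathfrak{so}(V_i)$-components and the trace part. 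The elementary but crucial skew-versus-symmetric argument that annihilates $f$ on the diagonal blocks is what upgrades the naive answer $\mathscr{R}(\mathfrak{co}(V_i))$ to $\mathscr{R}(\mathfrak{so}(V_i))$.
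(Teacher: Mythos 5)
Your proof is correct and follows essentially the same route as the paper: the same decomposition of $\Lambda^2 V$ into diagonal and mixed blocks, the Bianchi identity on mixed triples to annihilate the off-block and trace components, and the identification of the mixed part with the first prolongation $\mathfrak{co}(V_1)^{(1)}\cong V_1$ followed by the adjointness relation $W=Z^{\ast}$ forcing a single tensor in $V_1\otimes V_2$. You merely make explicit two points the paper leaves implicit, namely the skew-versus-symmetric cancellation and the final injectivity/surjectivity check of the isomorphism.
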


\begin{proof}
	Take $X_1, Y_1 \in V_1$, $X_2 \in V_2$ and consider the Bianchi identity for $R: \wedge^2 V \rightarrow \mathfrak{so} (V_1) \oplus \mathfrak{so} (V_2) \oplus \mathbb{R} \id_V$:
	\begin{equation}
	\label{BianchiEq}
	R(X_1, Y_1) X_2 +  R(Y_1, X_2) X_1 + R(X_2, X_1) Y_1 = 0.
	\end{equation}
	It follows that $R(X_1, Y_1) X_2 = 0$ and $R(X_1, Y_1) \in \mathfrak{so} (V_1)$. Since the Bianchi identity holds for vectors from $V_1$, then $R \big|_{\wedge^2 V_1} \in \mathscr{R} (\mathfrak{so} (V_1))$. Similarly, $R \big|_{\wedge^2 V_2} \in \mathscr{R} (\mathfrak{so} (V_2))$.
	
	Consider the equality 
	$$R(X_2, Y_1) X_1 = R(X_2, X_1) Y_1.$$
	Fix $X_2$, then it is easy to see, that 
	$$\varphi_{X_2} (\cdot) := R(X_2, \cdot|_{V_1})|_{V_1} \in (\mathfrak{so} (V_1) \oplus \mathbb{R} \id_{V_1} )^{(1)}.$$ Let $R_3 = R \big|_{V_1 \otimes V_2}$, then 
	$${R_3 : V_2 \rightarrow} (\mathfrak{so} (V_1) \oplus \mathbb{R} \id_{V_1})^{(1)} \cong V_1$$
	and $R_3 \in V_1 \otimes V_2$  ($V_1 \otimes V_2$ is irreducible $(\mathfrak{so} (V_1) \oplus \mathfrak{so} (V_2))$-representation).
	
	Thus, $R = R_1 + R_2 + R_3$, where $R_1 = R \big|_{\wedge^2 V_1}$, $R_2 = R \big|_{\wedge^2 V_2}$, and $R_3 = R \big|_{V_1 \otimes V_2 }$. Considering the tensor $R_3 : V_1 \otimes V_2 \rightarrow  \mathfrak{so} (V_1) \oplus  \mathfrak{so} (V_2) \oplus \mathbb{R} \id_V$ and fixing $X_2 \in V_2$, we obtain:
	\begin{equation*}
	R_3 (\cdot \big|_{V_1}, X_2) \big|_{V_1} \in (\mathfrak{so} (V_1) \oplus \mathbb{R} \id_{V_1} )^{(1)}, \quad R_3 (\cdot \big|_{V_1}, X_2) \big|_{V_1} = Z(X_2) \wedge \cdot + (Z(X_2), \cdot) \id_{V_1},
	\end{equation*}
	where $Z : V_2 \rightarrow V_1$ is a linear map. Similarly, by fixing $X_1 \in V_1$, we obtain:
	\begin{equation*}
	R_3 (X_1, \cdot \big|_{V_2}) \big|_{V_2} \in (\mathfrak{so} (V_2) \oplus \mathbb{R} \id_{V_2} )^{(1)}, \quad R_3 (X_1, \cdot \big|_{V_2}) \big|_{V_2} = W(X_1) \wedge \cdot \big|_{V_2} + (W(X_1), \cdot) \id_{V_2},
	\end{equation*}
	where $W : V_1 \rightarrow V_2$ is a linear map.
	
	For arbitrary $X_1, Y_1 \in V_1$, $X_2, Y_2 \in V_2$, we get following:
	\begin{gather*} 
	R_3 (X_1, X_2) Y_1 = (Z(X_2) \wedge X_1) Y_1 + (Z(X_2), X_1) Y_1, \\
	R_3 (X_1, X_2) Y_2 = (W(X_1) \wedge X_2) Y_2 + (W(X_1), X_2) Y_2.
	\end{gather*}
	From the last two equations we conclude that $(Z(X_2), X_1) = (W(X_1), X_2)$, i.e., $W = Z^\ast$. The map $Z : V_2 \rightarrow V_1$ defines $R_3$ by the formula:
	\begin{equation}\label{fR3}
	R_3(X_1, X_2) = Z(X_2) \wedge X_1 + Z^\ast(X_1) \wedge X_2 + (Z(X_2), X_1) \id_V, \quad R_3 \big|_{\wedge^2 V_1} = 0,
	\quad R_3 \big|_{\wedge^2 V_2} = 0.
	\end{equation}
\end{proof}

\begin{corollary}
\label{cor2}
	Let $V$ be a pseudo-Euclidean space with a non-trivial orthogonal decomposition $$V = V_1 \oplus V_2.$$ Let  $\mathfrak{h}_1 \subset\mathfrak{so}(V_1)$ be a subalgebra satisfying one  of the conditions from Lemma \ref{h1Lemma}. Let $\mathfrak{h}_2 \subset \mathfrak{so} (V_2)$ be an arbitrary subalgebra. Then $$\mathscr{R} (\mathfrak{h}_1 \oplus \mathfrak{h}_2 \oplus \mathbb{R} \id_V) = \mathscr{R} (\mathfrak{h}_1 \oplus \mathfrak{h}_2).$$
	In particular, $\mathfrak{h}_1 \oplus \mathfrak{h}_2 \oplus \mathbb{R} \id_V\subset\mathfrak{co}(V)$ is not a Berger algebra.
\end{corollary}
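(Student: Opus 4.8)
The plan is to reduce the general statement to the structural description of $\mathscr{R}(\mathfrak{so}(V_1)\oplus\mathfrak{so}(V_2)\oplus\mathbb{R}\id_V)$ provided by Theorem~\ref{ssrTh}, and then to show that the hypotheses on $\mathfrak{h}_1$ force the $\mathbb{R}\id_V$-component of every curvature tensor to vanish. First I would observe that since $\mathfrak{h}_1\oplus\mathfrak{h}_2\oplus\mathbb{R}\id_V$ is a subalgebra of $\mathfrak{so}(V_1)\oplus\mathfrak{so}(V_2)\oplus\mathbb{R}\id_V$, any $R\in\mathscr{R}(\mathfrak{h}_1\oplus\mathfrak{h}_2\oplus\mathbb{R}\id_V)$ is in particular an algebraic curvature tensor of type $\mathfrak{so}(V_1)\oplus\mathfrak{so}(V_2)\oplus\mathbb{R}\id_V$. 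Hence, by Theorem~\ref{ssrTh}, it decomposes as $R=R_1+R_2+R_3$ with $R_1\in\mathscr{R}(\mathfrak{so}(V_1))$, $R_2\in\mathscr{R}(\mathfrak{so}(V_2))$, and $R_3$ governed by a linear map $Z\colon V_2\to V_1$ via formula~\eqref{fR3}.

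The key step is to analyze the mixed term $R_3$. Since $R$ takes values in $\mathfrak{h}_1\oplus\mathfrak{h}_2\oplus\mathbb{R}\id_V$, restricting formula~\eqref{fR3} and projecting onto $\mathfrak{so}(V_1)\oplus\mathbb{R}\id_{V_1}$ shows that the map $X_1\mapsto R_3(X_1,X_2)\big|_{V_1}$ equals $Z(X_2)\wedge\cdot+(Z(X_2),\cdot)\id_{V_1}$ and must land in $\mathfrak{h}_1\oplus\mathbb{R}\id_{V_1}$ for every fixed $X_2\in V_2$. In other words, the assignment $X_1\mapsto R_3(\cdot,X_2)\big|_{V_1}$ is an element of $(\mathfrak{h}_1\oplus\mathbb{R}\id_{V_1})^{(1)}$. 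By Lemma~\ref{h1Lemma}, the hypothesis on $\mathfrak{h}_1$ gives $(\mathfrak{h}_1\oplus\mathbb{R}\id_{V_1})^{(1)}=0$, so this element vanishes for each $X_2$; consequently $Z(X_2)=0$ for all $X_2\in V_2$, i.e.\ $Z=0$ and therefore $R_3=0$. This immediately forces $R_1$ to take values in $\mathfrak{h}_1$ (not merely in $\mathfrak{so}(V_1)$) and $R_2$ in $\mathfrak{h}_2$, with no residual $\mathbb{R}\id_V$-component, so $R\in\mathscr{R}(\mathfrak{h}_1\oplus\mathfrak{h}_2)$. The reverse inclusion $\mathscr{R}(\mathfrak{h}_1\oplus\mathfrak{h}_2)\subset\mathscr{R}(\mathfrak{h}_1\oplus\mathfrak{h}_2\oplus\mathbb{R}\id_V)$ is trivial, since a larger target algebra only enlarges the space of admissible tensors, giving the claimed equality.

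The main obstacle I anticipate is bookkeeping the component along $\mathbb{R}\id_V$: one must verify carefully that the identity contribution in~\eqref{fR3} cannot be absorbed or compensated by the $R_1,R_2$ pieces, i.e.\ that the decomposition in Theorem~\ref{ssrTh} is a genuine direct sum so that vanishing of $Z$ really kills all $\id_V$-terms. Once $R_3=0$ is established, this is clear because $R_1$ and $R_2$ are pure $\mathfrak{so}(V_1)$- and $\mathfrak{so}(V_2)$-valued tensors with no identity component by construction.

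For the final assertion, once $\mathscr{R}(\mathfrak{h}_1\oplus\mathfrak{h}_2\oplus\mathbb{R}\id_V)=\mathscr{R}(\mathfrak{h}_1\oplus\mathfrak{h}_2)$, every curvature tensor of type $\mathfrak{h}_1\oplus\mathfrak{h}_2\oplus\mathbb{R}\id_V$ actually takes values in the proper subalgebra $\mathfrak{h}_1\oplus\mathfrak{h}_2$. Therefore $L(\mathscr{R}(\mathfrak{h}_1\oplus\mathfrak{h}_2\oplus\mathbb{R}\id_V))\subset\mathfrak{h}_1\oplus\mathfrak{h}_2\subsetneq\mathfrak{h}_1\oplus\mathfrak{h}_2\oplus\mathbb{R}\id_V$, so the Berger condition $L(\mathscr{R}(\mathfrak{g}))=\mathfrak{g}$ fails and $\mathfrak{h}_1\oplus\mathfrak{h}_2\oplus\mathbb{R}\id_V$ is not a Berger algebra.
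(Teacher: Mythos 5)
Your proposal is correct and takes essentially the same route as the paper: both decompose $R=R_1+R_2+R_3$ via Theorem~\ref{ssrTh}, observe that $R_3(\cdot|_{V_1},X_2)|_{V_1}$ defines an element of $(\mathfrak{h}_1\oplus\mathbb{R}\id_{V_1})^{(1)}$, and invoke Lemma~\ref{h1Lemma} to conclude $R_3=0$, hence $R\in\mathscr{R}(\mathfrak{h}_1\oplus\mathfrak{h}_2)$. The paper's proof is simply a condensed version of your argument, with the value constraints on $R_1$, $R_2$ and the failure of the Berger condition left implicit.
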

\begin{proof}
	Acting in the same way as in Theorem~\ref{ssrTh}, we get $R = R_1 + R_2 + R_3$, where $R_1 = R \big|_{V_1 \times V_1} \in \mathscr{R} (\mathfrak{h}_1)$, $R_2 = R \big|_{V_2 \times V_2} \in \mathscr{R} (\mathfrak{h}_2)$ and $R_3 = R \big|_{V_1 \otimes V_2 }$. For each $X_2\in V_2$ it holds $$R_3 (\cdot |_{V_1}, X_2)|_{V_1} \in (\mathfrak{h}_1 \oplus \mathbb{R} \id_{V_1} )^{(1)}.$$  According to Lemma~\ref{h1Lemma}, it holds $(\mathfrak{h}_1 \oplus \mathbb{R} \id_{V_1} )^{(1)}=0$, i.e., $R_3=0$.
\end{proof}

\begin{corollary}
\label{sssrCor}
	Let $V$ be a pseudo-Euclidean space with an orthogonal decomposition
	$$V = V_1 \oplus V_2 \oplus V_3,$$ where all subspaces are non-trivial. Then $$\mathscr{R} (\mathfrak{so} (V_1) \oplus \mathfrak{so} (V_2) \oplus \mathfrak{so} (V_3) \oplus \mathbb{R} \id_V) = \mathscr{R} (\mathfrak{so} (V_1)) \oplus \mathscr{R} (\mathfrak{so} (V_2)) \oplus \mathscr{R} (\mathfrak{so} (V_3)).$$
	In particular, $\mathfrak{h}_1 \oplus \mathfrak{h}_2 \oplus \mathfrak{h}_3\oplus \mathbb{R} \id_V\subset\mathfrak{co}(V)$ is not a Berger algebra for arbitrary subalgebras $\mathfrak{h}_i\subset\mathfrak{so}(V_i)$, $i=1,2,3$.
\end{corollary}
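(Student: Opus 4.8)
The plan is to deduce the three-summand identity from two nested applications of Theorem~\ref{ssrTh}, the decisive feature being that an algebraic curvature tensor of type $\mathfrak{g}:=\mathfrak{so}(V_1)\oplus\mathfrak{so}(V_2)\oplus\mathfrak{so}(V_3)\oplus\mathbb{R}\id_V$ is valued in the block-diagonal subalgebra $\mathfrak{so}(V_1)\oplus\mathfrak{so}(V_2)\oplus\mathfrak{so}(V_3)$ of $\mathfrak{so}(V)$ together with the scalars, so that no ``$V_i$--$V_j$ rotation'' is allowed in its image. First I would treat $V=V_1\oplus(V_2\oplus V_3)$ as a decomposition into two orthogonal summands. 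Since $\mathfrak{g}\subset\mathfrak{so}(V_1)\oplus\mathfrak{so}(V_2\oplus V_3)\oplus\mathbb{R}\id_V$, every $R\in\mathscr{R}(\mathfrak{g})$ lies in $\mathscr{R}(\mathfrak{so}(V_1)\oplus\mathfrak{so}(V_2\oplus V_3)\oplus\mathbb{R}\id_V)$, so Theorem~\ref{ssrTh} yields $R=R_1+R_{23}+R_{\mathrm{mix}}$ with $R_1\in\mathscr{R}(\mathfrak{so}(V_1))$, $R_{23}=R|_{\wedge^2(V_2\oplus V_3)}\in\mathscr{R}(\mathfrak{so}(V_2\oplus V_3))$, and $R_{\mathrm{mix}}$ of the form \eqref{fR3} for a linear map $Z\colon V_2\oplus V_3\to V_1$.

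The first key step is to show $R_{\mathrm{mix}}=0$. By \eqref{fR3} the $\mathfrak{so}(V_2\oplus V_3)$-component of $R_{\mathrm{mix}}(X_1,X_{23})$ is $Z^\ast(X_1)\wedge X_{23}$ with $X_1\in V_1$ and $X_{23}\in V_2\oplus V_3$; for $R$ to be $\mathfrak{g}$-valued this must lie in $\mathfrak{so}(V_2)\oplus\mathfrak{so}(V_3)$, i.e.\ carry no $V_2$--$V_3$ cross part. Writing $Z^\ast(X_1)=w_2+w_3$ with $w_2\in V_2$, $w_3\in V_3$ and letting $X_{23}$ range first over $V_3$ and then over $V_2$ forces $w_2=w_3=0$; as this holds for all $X_1$ we get $Z^\ast=0$, hence $Z=0$ and $R_{\mathrm{mix}}=0$. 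Thus $R=R_1+R_{23}$.

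Next I would apply Theorem~\ref{ssrTh} to the decomposition of $V_2\oplus V_3$. The Bianchi argument underlying that theorem (with one argument taken in $V_1$) already shows that $R_{23}$ is valued in $\mathfrak{so}(V_2\oplus V_3)$ and carries \emph{no} $\id$-component; combined with the $\mathfrak{g}$-constraint this gives $R_{23}\in\mathscr{R}(\mathfrak{so}(V_2)\oplus\mathfrak{so}(V_3))$ with values in $\mathfrak{so}(V_2)\oplus\mathfrak{so}(V_3)$. Invoking Theorem~\ref{ssrTh} once more, now for $\mathfrak{so}(V_2)\oplus\mathfrak{so}(V_3)\oplus\mathbb{R}\id_{V_2\oplus V_3}$, we obtain $R_{23}=R_2+R_3+R'_{\mathrm{mix}}$ with $R_i\in\mathscr{R}(\mathfrak{so}(V_i))$ and $R'_{\mathrm{mix}}$ again of the form \eqref{fR3}. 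This time the surviving cross term carries the scalar component $(\tilde Z(X_3),X_2)\id_{V_2\oplus V_3}$, which must vanish because $R_{23}$ has no $\id$-part; hence $\tilde Z=0$ and $R'_{\mathrm{mix}}=0$. Therefore $R=R_1+R_2+R_3$ with $R_i\in\mathscr{R}(\mathfrak{so}(V_i))$, and since the reverse inclusion (extend any triple by zero cross terms) is immediate, the asserted isomorphism follows.

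For the final assertion I would observe that the same reasoning applied to $\mathfrak{h}_1\oplus\mathfrak{h}_2\oplus\mathfrak{h}_3\oplus\mathbb{R}\id_V$ shows that each $R$ in its space of curvature tensors has image inside $\mathfrak{h}_1\oplus\mathfrak{h}_2\oplus\mathfrak{h}_3$ (intersect the block-diagonal conclusion with $\mathfrak{h}_i$ on each factor); since $\mathbb{R}\id_V$ is never hit, $L(\mathscr{R})$ is a proper subalgebra and the algebra is not Berger. I expect the crux to be exactly the two vanishing steps above: in the two-summand setting of Theorem~\ref{ssrTh} the mixed term $V_1\otimes V_2$ genuinely survives, and it is only the presence of a \emph{third} non-trivial block that removes it — the third block forbids the $V_i$--$V_j$ rotations (killing $Z$ in the first reduction) and forces the scalar part of the remaining cross term to vanish (killing $\tilde Z$ in the second). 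Organizing this so that each of the three pairs is handled while only the two-summand theorem is invoked is the main thing to get right.
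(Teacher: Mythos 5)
Your proof is correct, and it reaches the statement by a genuinely different route than the paper. The paper's own proof is a single line: it applies Corollary~\ref{cor2} to the grouping $V=(V_1\oplus V_2)\oplus V_3$, with $\mathfrak{h}_1=\mathfrak{so}(V_1)\oplus\mathfrak{so}(V_2)$ playing the role of the distinguished factor --- since this algebra preserves the proper non-degenerate subspace $V_1\subset V_1\oplus V_2$, Lemma~\ref{h1Lemma} gives $(\mathfrak{h}_1\oplus\mathbb{R}\id_{V_1\oplus V_2})^{(1)}=0$, and that one prolongation computation kills the mixed curvature term and the $\id$-component simultaneously. You use the opposite grouping $V_1\oplus(V_2\oplus V_3)$ and never invoke Corollary~\ref{cor2} or Lemma~\ref{h1Lemma}: instead you apply Theorem~\ref{ssrTh} twice and eliminate the two mixed terms directly from the explicit formula~\eqref{fR3} --- first $Z=0$ because the image of $R$ may contain no $V_2$--$V_3$ rotations (your $w_2=w_3=0$ step, which is where the non-triviality of both $V_2$ and $V_3$ enters), then $\tilde Z=0$ because the Bianchi identity with one argument in $V_1$ forces the inner block $R_{23}$ to take values in $\mathfrak{so}(V_2)\oplus\mathfrak{so}(V_3)$ with no $\id$-component, so the scalar part $(\tilde Z(X_3),X_2)\id$ of the second cross term must vanish. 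Both mechanisms are sound, and your deduction of the non-Berger claim (every curvature tensor misses $\mathbb{R}\id_V$, hence $L(\mathscr{R})\subset\mathfrak{h}_1\oplus\mathfrak{h}_2\oplus\mathfrak{h}_3$ is proper) matches the paper's. As for what each approach buys: the paper's proof is minimal given the machinery already in place, funneling everything through the prolongation lemma that is reused throughout Sections~\ref{AuxSec} and~\ref{CurvTensorSec}; yours is more elementary and self-contained modulo Theorem~\ref{ssrTh}, cleanly separates the two distinct reasons why a third block forbids cross-terms, and handles explicitly the inner splitting $\mathscr{R}(\mathfrak{so}(V_2)\oplus\mathfrak{so}(V_3))=\mathscr{R}(\mathfrak{so}(V_2))\oplus\mathscr{R}(\mathfrak{so}(V_3))$, which the paper's one-line argument leaves implicit.
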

\begin{proof}
	The statement follows from Corollary \ref{cor2} applied to the decomposition $V=(V_1 \oplus V_2) \oplus V_3$.
\end{proof}

\section{Algebraic curvature tensors}
\label{CurvTensorSec}

For a subalgebra $\mathfrak{h}\subset\mathfrak{so}(n)$ consider the following space:
\begin{align*}
\nonumber \mathscr{P}(\mathfrak{h})=\{P\in \operatorname{Hom}
(\mathbb{R}^n,\mathfrak{h})\mid{}& g(P(X)Y,Z)+g(P(Y)Z,X)
\\
&+g(P(Z)X,Y)=0,\ X,Y,Z\in \mathbb{R}^n\}
\end{align*}
defined in~\cite{Leistner07}, see also~\cite{Gal10}. The space $\mathscr{P}(\mathfrak{h})$ is called the \textsl{space of weak curvature tensors of type~$\mathfrak{h}$}.

\begin{theorem}
	\label{RcoTh}
	Every algebraic curvature tensor $R \in \mathscr{R} (\mathfrak{co} (1, n + 1)_{\mathbb{R} p})$ is uniquely determined by the elements:
	\begin{gather*} 
	\mu , \lambda \in \mathbb{R}, \quad
	X_0, Z_0 \in \mathbb{R}^n, \quad
	\gamma \in \Hom(\mathbb{R}^n, \mathbb{R}), \quad
	P \in \mathscr{P} (\mathfrak{so} (n)), \quad
	K \in \odot^2 \mathbb{R}^n, \\
	S + \tau \id_{\mathbb{R}^n} \in \mathscr{R} (\mathfrak{co} (n)), \quad \text{where} \quad S \in \Hom(\wedge^2 \mathbb{R}^n,\mathfrak{so} (n)), \quad \tau \in \Hom(\wedge^2 \mathbb{R}^n,\mathbb{R} )
	\end{gather*}
	by the equalities
	\begin{align}
	\label{RcoEq}
	\begin{split}
	R(p, q) &= \big( \mu, \lambda, A_0, X_0 \big) , \\
	R(p, V) &= \big( (Z_0, V), (Z_0, V), Z_0 \wedge V, - (A_0 + \mu E_n) V \big), \\
	R(U, V) &= \big( (A_0 U, V), (A_0 U, V), S(U, V), L(U, V) \big) , \\
	R(U, q) &= \big( \gamma (U), (U, X_0) - \gamma (U), P(U), K(U) \big) , 
	\end{split}
	\end{align}
	where $A_0 \in \mathfrak{so} (n)$ is defined from the condition $\tau (U, V) = (A_0 U, V) $, and 
	$$L(U, V) = P(V) U + \gamma (V) U - P(U) V - \gamma (U) V.$$
\end{theorem}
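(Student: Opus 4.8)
The plan is to use the Witt decomposition of $\wedge^2 \mathbb{R}^{1,n+1}$ to reduce the Bianchi identity to a finite list of component equations. Since $p \wedge q$, $p \wedge V$, $U \wedge V$ and $U \wedge q$ (with $U, V \in \mathbb{R}^n$) span $\wedge^2 \mathbb{R}^{1,n+1}$, the tensor $R$ is determined by the four maps $R(p,q)$, $R(p,\cdot)$, $R|_{\wedge^2\mathbb{R}^n}$ and $R(\cdot, q)$. Writing each value in the coordinates $(b,a,A,X)$ of $\mathfrak{co}(1,n+1)_{\mathbb{R}p}$ introduced above, and recording the action $(b,a,A,X)p = (b+a)p$, $(b,a,A,X)V = (X,V)p + (A + bE_n)V$, $(b,a,A,X)q = (b-a)q - X$, I obtain unknown linear maps for the components of each value. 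The whole computation then amounts to imposing the Bianchi identity on the triples $(p,q,V)$, $(p,U,V)$, $(q,U,V)$ and $(U,V,W)$, as every other triple repeats a vector and is trivial.

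First I would feed the triple $(p,q,V)$ into the Bianchi identity and split the result along the three summands $\mathbb{R}p \oplus \mathbb{R}^n \oplus \mathbb{R}q$. The $q$-component forces the two scalar components of $R(p,V)$ to agree, the $\mathbb{R}^n$-component yields the $X$-part of $R(p,V)$ equal to $-(A_0 + \mu E_n)V$, and the $p$-component gives the $a$-component of $R(\cdot,q)$ as $(U,X_0) - \gamma(U)$, where $\mu, A_0, X_0$ are read off from $R(p,q) = (\mu, \lambda, A_0, X_0)$ and $\gamma$ denotes the $b$-component of $R(\cdot, q)$. Next, the triple $(p,U,V)$ has an $\mathbb{R}^n$-component asserting that the $\mathfrak{co}(n)$-valued map $V \mapsto \Phi(V) + \beta(V)E_n$ (with $\Phi, \beta$ the $\mathfrak{so}(n)$- and scalar components of $R(p,\cdot)$) lies in $\mathfrak{co}(n)^{(1)}$; by the explicit description \eqref{1prolongEq} of $\mathfrak{co}(n)^{(1)}$ this produces a single vector $Z_0 \in \mathbb{R}^n$ with $\Phi(V) = Z_0 \wedge V$ and $\beta(V) = (Z_0, V)$, while its $p$-component gives $b(U,V) + a(U,V) = 2(A_0 U, V)$ for the scalar components of $R|_{\wedge^2\mathbb{R}^n}$.

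The triple $(q,U,V)$ then finishes the off-diagonal blocks: its $q$-component gives $b(U,V) = a(U,V)$, which together with the previous identity pins both scalar components of $R(U,V)$ to $(A_0 U, V)$; its $p$-component makes $K$ (the $\mathbb{R}^n$-component of $R(\cdot, q)$) symmetric, i.e. $K \in \odot^2\mathbb{R}^n$; and its $\mathbb{R}^n$-component expresses the $X$-component $L$ of $R|_{\wedge^2\mathbb{R}^n}$ through $P$ and $\gamma$ exactly by the stated formula. Finally the purely spatial triple $(U,V,W)$ contributes two things: the $\mathbb{R}^n$-component is literally the Bianchi identity for $S + \tau \id_{\mathbb{R}^n}$ (with $S$ the $\mathfrak{so}(n)$-component of $R|_{\wedge^2\mathbb{R}^n}$ and $\tau(U,V) = (A_0 U, V)$), so $S + \tau\id_{\mathbb{R}^n} \in \mathscr{R}(\mathfrak{co}(n))$, while the $p$-component, after the $\gamma$-terms cancel by antisymmetry, reduces to $-2$ times the cyclic sum defining $\mathscr{P}(\mathfrak{so}(n))$, giving $P \in \mathscr{P}(\mathfrak{so}(n))$.

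At that point every component of $R$ is expressed through $\mu, \lambda, X_0, Z_0, \gamma, P, K$ and $S + \tau\id_{\mathbb{R}^n}$, and the only surviving constraints are exactly $P \in \mathscr{P}(\mathfrak{so}(n))$, $K \in \odot^2\mathbb{R}^n$ and $S + \tau\id_{\mathbb{R}^n} \in \mathscr{R}(\mathfrak{co}(n))$; since each step above is an equivalence, conversely any choice of this data defines a tensor satisfying all four families of Bianchi relations, establishing the claimed bijection. I expect the only real bookkeeping obstacle to be the $p$-component of the $(U,V,W)$ triple, where one must check that the combination coming from $L(U,V)$ collapses --- using the skew-symmetry of each $P(U) \in \mathfrak{so}(n)$ --- precisely to the defining identity of $\mathscr{P}(\mathfrak{so}(n))$; the rest is linear algebra guided by the splitting $\mathbb{R}^{1,n+1} = \mathbb{R}p \oplus \mathbb{R}^n \oplus \mathbb{R}q$.
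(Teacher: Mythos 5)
Your proposal is correct and takes essentially the same approach as the paper: the paper's (sketched) proof likewise uses the decomposition $\mathbb{R}^{1,n+1}=\mathbb{R}p\oplus\mathbb{R}^n\oplus\mathbb{R}q$ to split $\wedge^2\mathbb{R}^{1,n+1}$, writes $R$ through restriction and projection operators, and reduces the Bianchi identity to a linear-algebra computation (with details deferred to the reference \cite{Gal05}), which is precisely the computation you carry out, including the identification of $Z_0$ via the prolongation $\mathfrak{co}(n)^{(1)}$ from \eqref{1prolongEq}. One cosmetic remark: the $\gamma$-terms in the $p$-component of the $(U,V,W)$ identity cancel by symmetry of the inner product rather than by antisymmetry.
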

The idea of the proof is the same as for the proof of a similar theorem from~\cite{Gal05}. The decomposition $\mathbb{R}^{1, n + 1} = \mathbb{R} p \oplus \mathbb{R}^{n} \oplus \mathbb{R} q$ determines the decomposition of the space $\wedge^2 \mathbb{R}^{1, n + 1}$. For $$R :  \wedge^2 \mathbb{R}^{1, n + 1} \rightarrow \mathfrak{co} (1, n + 1)_{\mathbb{R} p} = \mathbb{R} \id_{\mathbb{R}^{1,n+1}} \oplus (\mathbb{R} \oplus \mathfrak{so} (n) \ltimes \mathbb{R}^n)$$ one can consider the restrictions and projections and get various linear operators. It remains to rewrite the Bianchi identity in terms of these operators and solve a problem from linear algebra.

\begin{theorem}
	\label{RhTh}
	 Let $\mathfrak{g} = \mathbb{R} \id_{\mathbb{R}^{1,n+1}} \oplus (\mathbb{R} \oplus \mathfrak{h} ) \ltimes \mathbb{R}^n$, where $\mathfrak{h} \subset \mathfrak{so} (n)$ is a proper subalgebra, and $R \in \mathscr{R} (\mathfrak{g})$, then $R$ satisfies Theorem~\ref{RcoTh} with the following additional constraints:
	\begin{gather*} 
	Z_0 = 0, \quad \tau = 0, \quad A_0 = 0, \quad S \in \mathscr{R} (\mathfrak{h}), \quad P \in \mathscr{P} (\mathfrak{h}).
	\end{gather*}
\end{theorem}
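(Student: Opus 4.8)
The plan is to exploit that $\mathfrak{g}\subset\mathfrak{co}(1,n+1)_{\mathbb{R}p}$, so that $\mathscr{R}(\mathfrak{g})\subset\mathscr{R}(\mathfrak{co}(1,n+1)_{\mathbb{R}p})$ and every $R\in\mathscr{R}(\mathfrak{g})$ is already of the form \eqref{RcoEq} provided by Theorem~\ref{RcoTh}. The only new information is that $R$ takes values in $\mathfrak{g}$ rather than in all of $\mathfrak{co}(1,n+1)_{\mathbb{R}p}$; comparing the two algebras, the difference is visible only in the $\mathfrak{so}(n)$-component, which for $\mathfrak{g}$ is forced to lie in $\mathfrak{h}$. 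Reading off the $\mathfrak{so}(n)$-components in \eqref{RcoEq}, I would record the four conditions $A_0\in\mathfrak{h}$, $Z_0\wedge V\in\mathfrak{h}$ for all $V\in\mathbb{R}^n$, $S(U,V)\in\mathfrak{h}$, and $P(U)\in\mathfrak{h}$. Thus $S\in\Hom(\wedge^2\mathbb{R}^n,\mathfrak{h})$ and $P\in\Hom(\mathbb{R}^n,\mathfrak{h})$; since $P\in\mathscr{P}(\mathfrak{so}(n))$ by Theorem~\ref{RcoTh} and the defining cyclic identity of $\mathscr{P}$ does not see the target algebra, this immediately yields $P\in\mathscr{P}(\mathfrak{h})$.

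Next I would prove $Z_0=0$. If $Z_0\neq0$, after rescaling we may take $Z_0=e_1$, so that $e_1\wedge e_j\in\mathfrak{h}$ for all $j$. Using the $\mathfrak{so}(n)$-bracket of decomposable bivectors one computes $[e_1\wedge e_i,e_1\wedge e_j]=e_i\wedge e_j$ for distinct $i,j\geqslant2$, whence $e_i\wedge e_j\in\mathfrak{h}$ for all $i,j$ and therefore $\mathfrak{h}=\mathfrak{so}(n)$, contradicting that $\mathfrak{h}$ is proper. (For $n=2$ the conclusion is immediate, since the only proper subalgebra of $\mathfrak{so}(2)$ is $0$ while $e_1\wedge e_2\neq0$.) Hence $Z_0=0$.

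The decisive step is $\tau=0$, equivalently $A_0=0$. By Theorem~\ref{RcoTh} we have $S+\tau\id_{\mathbb{R}^n}\in\mathscr{R}(\mathfrak{co}(n))$, and the constraints above show that this tensor actually takes values in $\mathfrak{h}\oplus\mathbb{R}\id_{\mathbb{R}^n}$, so $S+\tau\id_{\mathbb{R}^n}\in\mathscr{R}(\mathfrak{h}\oplus\mathbb{R}\id_{\mathbb{R}^n})$. Since $\mathfrak{h}\subset\mathfrak{so}(n)$ is proper, Lemma~\ref{h1Lemma} gives $(\mathfrak{h}\oplus\mathbb{R}\id_{\mathbb{R}^n})^{(1)}=0$. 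As in the proof of Theorem~\ref{ssrTh}, the scalar (trace) freedom of a conformal-type curvature tensor is precisely what is measured by this first prolongation, so its vanishing should force $\tau=0$. Concretely I would test the Bianchi identity for $S+\tau\id_{\mathbb{R}^n}$ against the $\mathfrak{h}$-irreducible decomposition $\mathbb{R}^n=\mathbb{R}^{n_0}\oplus\mathbb{R}^{n_1}\oplus\cdots\oplus\mathbb{R}^{n_r}$: taking $U,V$ in one summand and $W$ in another, the projection of Bianchi onto the summand of $W$ reads $S(U,V)|_{\cdot}=-\tau(U,V)\id$, i.e. a skew-symmetric operator equal to a scalar one, which forces both to vanish and kills the diagonal block of $\tau$; the cross-block components of $\tau$ vanish because $A_0\in\mathfrak{h}$ is block-diagonal, and the $\mathbb{R}^{n_0}$-block because $A_0$ annihilates $\mathbb{R}^{n_0}$. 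The hard part will be the case of an \emph{irreducible} proper $\mathfrak{h}$ ($r=1$, $n_0=0$), where there is no complementary summand to test against; there one must argue directly that $\tau\neq0$ produces a nonzero element of $(\mathfrak{h}\oplus\mathbb{R}\id_{\mathbb{R}^n})^{(1)}$, i.e. establish the reduction $\mathscr{R}(\mathfrak{h}\oplus\mathbb{R}\id_{\mathbb{R}^n})=\mathscr{R}(\mathfrak{h})$ from the vanishing prolongation. I expect this to be the main obstacle.

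Finally, once $\tau=0$ and hence $A_0=0$, the tensor $S=S+\tau\id_{\mathbb{R}^n}$ lies in $\mathscr{R}(\mathfrak{co}(n))$, is trace-free (hence $\mathfrak{so}(n)$-valued) and $\mathfrak{h}$-valued, so $S\in\mathscr{R}(\mathfrak{h})$. Collecting $Z_0=0$, $\tau=0$, $A_0=0$, $S\in\mathscr{R}(\mathfrak{h})$ and $P\in\mathscr{P}(\mathfrak{h})$ gives exactly the additional constraints of the statement, completing the proof.
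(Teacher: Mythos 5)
Your opening reduction (the only new information is that the four $\mathfrak{so}(n)$-valued components $A_0$, $Z_0\wedge\cdot$, $S$, $P$ must land in $\mathfrak{h}$), your bracket computation forcing $Z_0=0$ (a nice elementary substitute for the paper's appeal to Lemma~\ref{h1Lemma}), and your reducible case are all correct. In the reducible case your argument is essentially the paper's Case~2: the decisive point in both is the specifically Lorentzian constraint that $\tau(U,V)=(A_0U,V)$ with $A_0\in\mathfrak{h}$ block-diagonal, which is incompatible with the position of the scalar part dictated by Theorem~\ref{ssrTh} unless $\tau=0$.

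The genuine gap is the irreducible case, which you defer, and the route you propose for it cannot work. The implication ``$(\mathfrak{h}\oplus\mathbb{R}\id_{\mathbb{R}^n})^{(1)}=0$ implies $\mathscr{R}(\mathfrak{h}\oplus\mathbb{R}\id_{\mathbb{R}^n})=\mathscr{R}(\mathfrak{h})$'' is false as a general principle: take $\mathfrak{h}=\mathbb{R}(e_1\wedge e_2-e_3\wedge e_4)\subset\mathfrak{so}(4)$. This is a proper subalgebra, so Lemma~\ref{h1Lemma} gives $(\mathfrak{h}\oplus\mathbb{R}\id_{\mathbb{R}^4})^{(1)}=0$; yet the tensor of the form~\eqref{fR3} determined by $Z\colon\langle e_3,e_4\rangle\to\langle e_1,e_2\rangle$, $Z(e_3)=e_1$, $Z(e_4)=e_2$, has values $R(e_1,e_3)=R(e_2,e_4)=\id_{\mathbb{R}^4}$, $R(e_2,e_3)=-R(e_1,e_4)=e_1\wedge e_2-e_3\wedge e_4$, and zero on $\wedge^2\langle e_1,e_2\rangle$ and $\wedge^2\langle e_3,e_4\rangle$, so it lies in $\mathscr{R}(\mathfrak{h}\oplus\mathbb{R}\id_{\mathbb{R}^4})$ and has non-zero scalar part. (Here $\mathfrak{h}\oplus\mathbb{R}\id_{\mathbb{R}^4}$ is conjugate to the exceptional four-dimensional algebra in the Riemannian Weyl classification of \cite{Belgun11}, which is a Berger algebra.) So the first prolongation does not ``measure the scalar freedom'': in Theorem~\ref{ssrTh} it parametrizes the mixed component, and only for subalgebras of product form does the scalar part sit entirely in that component; for diagonally embedded subalgebras it does not, and what rescues your reducible case is the constraint $A_0\in\mathfrak{h}$, which gives no leverage when $\mathfrak{h}$ is irreducible (then $A_0$ may a priori be any element of $\mathfrak{h}$). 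The paper closes exactly this case by quoting the Merkulov--Schwachh\"ofer classification \cite{Merkulov99}, from which $\mathscr{R}(\mathfrak{h}\oplus\mathbb{R}\id_{\mathbb{R}^n})=\mathscr{R}(\mathfrak{h})$ for every proper irreducible $\mathfrak{h}\subset\mathfrak{so}(n)$; an external input of this strength appears unavoidable, and without it your proof is incomplete at its acknowledged ``main obstacle.''
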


\begin{proof}
Suppose that $\mathfrak{g} = \mathbb{R} \id_{\mathbb{R}^{1,n+1}} \oplus (\mathbb{R} \oplus \mathfrak{h} ) \ltimes \mathbb{R}^n$ and take $R$ as in Theorem~\ref{RcoTh}. The condition $R \in \mathscr{R} (\mathfrak{g})$ is equivalent to the conditions $R \in   \mathscr{R} (\mathfrak{co} (1, n + 1)_{\mathbb{R} p})$ and $R(X, Y) \in \mathfrak{g}$ for all $X, Y \in \mathbb{R}^{1, n + 1}$. It follows that
\begin{equation*}
A_0 \in \mathfrak{h}, \quad
Z_0 \wedge V \in \mathfrak{h}, \quad
S(U, V) \in \mathfrak{h}, \quad
P \in \mathscr{P} (\mathfrak{h}) \quad 
\text{for all} \quad
U, V \in \mathbb{R}^{n}.
\end{equation*}

\textbf{Case 1.} Suppose that $\mathfrak{h} \subset \mathfrak{so} (n)$ is a proper irreducible subalgebra. Note that 
$$Z_0 \wedge V + (Z_0, V) \id_{\mathbb{R}^{1,n+1}} \in (\mathfrak{h} \oplus \mathbb{R} \id_{\mathbb{R}^{1,n+1}})^{(1)}.$$
According to Lemma~\ref{h1Lemma}, $(\mathfrak{h} \oplus \mathbb{R} \id_{\mathbb{R}^{1,n+1}})^{(1)} = 0$, therefore $Z_0 = 0$. Since $S(U, V) \in \mathfrak{h}$ we get, that $S + \tau \id_{\mathbb{R}^n} \in \mathscr{R} (\mathfrak{h} \oplus \mathbb{R} \id_{\mathbb{R}^n})$. From the classification from~\cite{Merkulov99} it follows that $\mathscr{R} (\mathfrak{h} \oplus \mathbb{R} \id_{\mathbb{R}^n}) = \mathscr{R} (\mathfrak{h})$. Hence $\tau = 0$ and $A_0 = 0$.

\textbf{Case 2.} Suppose that $\mathfrak{h} $ is not irreducible. Then there exists an  $\mathfrak{h}$-invariant decomposition $\mathbb{R}^{n} = \mathbb{R}^{k} \oplus \mathbb{R}^{n - k}$. Let $\mathfrak{h} = \mathfrak{so} (k) \oplus \mathfrak{so} (n - k)$. Then, according to Theorem~\ref{ssrTh}:
$$(S + \tau \id_{\mathbb{R}^{n}})(X_1, X_2) = Z(X_2) \wedge X_1 + Z^{\ast}(X_1) \wedge X_2 + (Z(X_2), X_1) \id_{\mathbb{R}^{n}}.$$ 
Now, 
$$\tau (X_1, X_2) = (Z(X_2), X_1) = (A_0 X_1, X_2)$$
and, hence, $A_0 X_1 = Z^{\ast}(X_1)$. Also, from the equality $$(A_0 X_2, X_1) = - (A_0 X_1, X_2) = - (Z(X_2), X_1),$$ we obtain $A_0 X_2 = - Z(X_2)$. But $A_0 \in \mathfrak{h}$, consequently $Z = 0$ and so $A_0 = 0$ and $\tau = 0$. The same result is true for an arbitrary subalgebra of $\mathfrak{so} (k) \oplus \mathfrak{so} (n - k)$.
\end{proof}

\begin{theorem}
	\label{prsoTh}
	Let $\mathfrak{g} \subset \mathfrak{co} (1, n + 1)_{\mathbb{R} p}$ be a subalgebra and let $\mathfrak{h} = \pr_{\mathfrak{so} (n)} \mathfrak{g} $. If $\mathfrak{g} $ is a Berger subalgebra, then $\mathfrak{h} \subset \mathfrak{so} (n) $ is the holonomy algebra of a Riemannian manifold.
\end{theorem}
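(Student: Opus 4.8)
The plan is to deduce the statement from Leistner's classification of weak-Berger algebras. Recall from~\cite{Leistner07} that a subalgebra $\mathfrak{h}\subset\mathfrak{so}(n)$ is the holonomy algebra of a Riemannian manifold as soon as it is spanned by the images of its weak curvature tensors; writing $L(\mathscr{P}(\mathfrak{h})):=\Span\{P(X)\mid P\in\mathscr{P}(\mathfrak{h}),\ X\in\mathbb{R}^n\}$, this means $L(\mathscr{P}(\mathfrak{h}))=\mathfrak{h}$. Since the inclusion $L(\mathscr{P}(\mathfrak{h}))\subset\mathfrak{h}$ is automatic, I only have to prove $\mathfrak{h}\subset L(\mathscr{P}(\mathfrak{h}))$. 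The case $\mathfrak{h}=\mathfrak{so}(n)$ is trivial, because $\mathfrak{so}(n)$ is the holonomy algebra of a generic Riemannian metric, so from now on I assume that $\mathfrak{h}$ is a proper subalgebra of $\mathfrak{so}(n)$.

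First I would use that $\pr_{\mathfrak{so}(n)}\colon\mathfrak{co}(1,n+1)_{\mathbb{R}p}\to\mathfrak{so}(n)$ is a homomorphism of Lie algebras, which is immediate from~\eqref{soBrackets}. Applying it to the Berger identity $\mathfrak{g}=L(\mathscr{R}(\mathfrak{g}))$ yields
$$\mathfrak{h}=\pr_{\mathfrak{so}(n)}\mathfrak{g}=\Span\{\pr_{\mathfrak{so}(n)}R(X,Y)\mid R\in\mathscr{R}(\mathfrak{g}),\ X,Y\in\mathbb{R}^{1,n+1}\},$$
so it suffices to show that $\pr_{\mathfrak{so}(n)}R(X,Y)\in L(\mathscr{P}(\mathfrak{h}))$ for every $R\in\mathscr{R}(\mathfrak{g})$ and all $X,Y$.

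Next I would pin down these components. Since $\mathfrak{h}$ is proper, $\mathfrak{g}$ is contained in the preimage $\pr_{\mathfrak{so}(n)}^{-1}(\mathfrak{h})=\mathbb{R}\id_{\mathbb{R}^{1,n+1}}\oplus(\mathbb{R}\oplus\mathfrak{h})\ltimes\mathbb{R}^n=:\mathfrak{g}_{\mathrm{max}}$, which is a subalgebra precisely because $\mathfrak{h}$ is; hence $\mathscr{R}(\mathfrak{g})\subset\mathscr{R}(\mathfrak{g}_{\mathrm{max}})$. Theorem~\ref{RhTh}, applied to $\mathfrak{g}_{\mathrm{max}}$, then forces every $R\in\mathscr{R}(\mathfrak{g})$ to satisfy $Z_0=0$, $\tau=0$, $A_0=0$, $S\in\mathscr{R}(\mathfrak{h})$ and $P\in\mathscr{P}(\mathfrak{h})$. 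Reading off~\eqref{RcoEq}, the components $\pr_{\mathfrak{so}(n)}R(p,q)=A_0$ and $\pr_{\mathfrak{so}(n)}R(p,V)=Z_0\wedge V$ vanish, and the only surviving contributions to $\pr_{\mathfrak{so}(n)}R(X,Y)$ are $S(U,V)$ from $R(U,V)$ and $P(U)$ from $R(U,q)$.

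It then remains to place $S(U,V)$ and $P(U)$ in $L(\mathscr{P}(\mathfrak{h}))$. For $P$ this is immediate, as $P\in\mathscr{P}(\mathfrak{h})$. For $S$ I would invoke the standard inclusion $L(\mathscr{R}(\mathfrak{h}))\subset L(\mathscr{P}(\mathfrak{h}))$: for $S\in\mathscr{R}(\mathfrak{h})$ and any fixed $v\in\mathbb{R}^n$ the map $X\mapsto S(X,v)$ belongs to $\mathscr{P}(\mathfrak{h})$, which follows at once from the pair symmetry and the first Bianchi identity of $S$; hence $S(U,V)\in L(\mathscr{P}(\mathfrak{h}))$. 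Combining these, every $\pr_{\mathfrak{so}(n)}R(X,Y)$ lies in $L(\mathscr{P}(\mathfrak{h}))$, so $\mathfrak{h}=L(\mathscr{P}(\mathfrak{h}))$ and $\mathfrak{h}$ is a Riemannian holonomy algebra by~\cite{Leistner07}. The one genuinely deep ingredient is Leistner's classification of weak-Berger algebras, which I use as a black box; the decisive step internal to the paper is the passage to $\mathfrak{g}_{\mathrm{max}}$ and the invocation of Theorem~\ref{RhTh}, which is exactly what eliminates the $A_0$, $Z_0$ and trace parts and leaves only the manifestly Riemannian data $S$ and $P$.
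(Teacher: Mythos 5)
Your proof is correct and follows essentially the same route as the paper: reduce to the case of a proper projection $\mathfrak{h}$, apply Theorem~\ref{RhTh} (via the containment $\mathscr{R}(\mathfrak{g})\subset\mathscr{R}(\mathbb{R}\id_{\mathbb{R}^{1,n+1}}\oplus(\mathbb{R}\oplus\mathfrak{h})\ltimes\mathbb{R}^n)$, which the paper's remark in Section~\ref{CurvTensorSec} makes implicitly) to kill $A_0$ and $Z_0$, so that the $\mathfrak{so}(n)$-components of curvature are exhausted by $S\in\mathscr{R}(\mathfrak{h})$ and $P\in\mathscr{P}(\mathfrak{h})$, and then invoke Leistner's weak-Berger theorem. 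The only difference is that you spell out explicitly the standard inclusion $L(\mathscr{R}(\mathfrak{h}))\subset L(\mathscr{P}(\mathfrak{h}))$ via $S(\cdot,v)\in\mathscr{P}(\mathfrak{h})$, a step the paper leaves implicit when citing~\cite{Leistner07}.
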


\begin{proof}
If $\mathfrak{h} = \mathfrak{so} (n) $, then $\mathfrak{h}$ is the holonomy algebra of a Riemannian manifold. Now suppose that $\mathfrak{h} \not\subset \mathfrak{so} (n) $, in this case, from Theorem~\ref{RhTh} it follows that $A_0 = 0$ and $Z_0 = 0$. Thus $\mathfrak{h}$ is generated by the images of the elements $S \in \mathscr{R} (\mathfrak{h})$, $P \in \mathscr{P} (\mathfrak{h})$. Therefore $\mathfrak{h}$ is the holonomy algebra of a Riemannian manifold according to~\cite{Leistner07}.
\end{proof}

\begin{remark*}
In what follows we will use the following fact.
Suppose that $\mathfrak{g} \subset \mathfrak{co}(1,n+1)_{\mathbb{R}p}$ is a subalgebra, then
$R\in\mathscr{R}(\mathfrak{g})$ if and only if  $R \in   \mathscr{R} (\mathfrak{co} (1, n + 1)_{\mathbb{R} p})$ and $R(X, Y) \in \mathfrak{g}$ for all $X, Y \in \mathbb{R}^{1, n + 1}$. This means that $R$ may be described as in Theorem \ref{RcoTh} with the additional condition that $R$ takes values in $\mathfrak{g}$.
If the projection $\mathfrak{h}$ of $\mathfrak{g}$ to $\mathfrak{so}(n)$ is a proper subalgebra of $\mathfrak{so}(n)$, then Theorem~\ref{RhTh} may be used.
\end{remark*}

\section{Proof of Theorem~\ref{class2Th}}
\label{2ThProofSec}

By the assumption of the theorem we have a $\mathfrak{g}$-invariant decomposition 
$$\mathbb{R}^{1, n + 1} = \mathbb{R}^{1, k + 1} \oplus \mathbb{R}^{n - k},$$ 
i.e., 
$$\mathfrak{g} \subset \mathfrak{so} (1, k + 1) \oplus \mathfrak{so} (n - k) \oplus \mathbb{R} \id_{\mathbb{R}^{1,n+1}}.$$ 
We will consider two cases.

{\bf Case 1.}
First suppose, that $\pr_{\mathfrak{so} (1, k + 1)} \mathfrak{g}$ is irreducible. 
Then, $\pr_{\mathfrak{so} (1, k + 1)} \mathfrak{g} = \mathfrak{so} (1, k + 1)$ and $k \geqslant 1$. 
Consider the ideal $\mathfrak{a} = \mathfrak{g} \cap \mathfrak{so} (n - k) \subset \pr_{\mathfrak{so} (n - k)} \mathfrak{g}$. 
Since each subalgebra of $\mathfrak{so} (n - k)$ is  reductive, there exists an ideal $\mathfrak{b} \subset \pr_{\mathfrak{so} (n - k)} \mathfrak{g}$
such that $$\pr_{\mathfrak{so} (n - k)} \mathfrak{g}=\mathfrak{a}\oplus \mathfrak{b}.$$ By the definition of the ideal $\mathfrak{a}$,  the projection $\pr_{\mathfrak{so} (1, n + 1)} \mathfrak{g}$ may be described in the following way.  There exists a surjective linear map $$\varphi : \mathfrak{so} (1, k + 1) \rightarrow \mathfrak{b}$$ such that 
$$\pr_{\mathfrak{so} (1, n + 1)}\mathfrak{g} = \{A + \varphi(A) \mid A \in \mathfrak{so} (1, k + 1) \} \oplus \mathfrak{a}.$$ 
It holds $$[A_1 + \varphi(A_1),A_2 + \varphi(A_2)]=[A_1,A_2] + [\varphi(A_1),\varphi(A_2)]\in\mathfrak{so} (1, k + 1) \oplus\mathfrak{b}.$$
This implies that $\varphi$ is a homomorphism of Lie algebras. 
The Lie algebra $\mathfrak{so} (1, k + 1)$ is simple for $k \geqslant 1$, so either $\ker \varphi = 0$ or $\varphi = 0$. 
If $\ker \varphi = 0$, then $\varphi$ is an isomorphism. This is impossible, since $\mathfrak{b} \subset \mathfrak{so} (n - k)$ is a compact Lie algebra, while $\mathfrak{so} (1, k + 1)$ is not compact. 
Thus, $\varphi = 0$ and, hence, $$\pr_{\mathfrak{so} (1, n + 1)}\mathfrak{g} = \mathfrak{so} (1, k + 1) \oplus \mathfrak{a}.$$ If $\mathfrak{a} \subset \mathfrak{so} (n - k)$ is a proper subalgebra, then by Corollary~\ref{cor2}, $\mathfrak{g}$ is not a Berger algebra. 
Therefore, $\mathfrak{a}=\mathfrak{so} (n - k)$. If $\mathfrak{g}$ contains $\mathbb{R} \id_{\mathbb{R}^{1,n+1}}$, then
$$\mathfrak{g} = \mathfrak{so} (1, k + 1) \oplus \mathfrak{so} (n - k) \oplus \mathbb{R} \id_{\mathbb{R}^{1,n+1}}.$$
Suppose that $\mathfrak{g}$ does not contain $\mathbb{R} \id_{\mathbb{R}^{1,n+1}}$. Using arguments as in Section \ref{bCasesSec}, we see that this is possible only for $k=n-2$ (i.e., when $\pr_{\mathfrak{so} (1, n + 1)}\mathfrak{g}$ is not semisimple), and then
$$\mathfrak{g} = \mathfrak{so} (1, k + 1) \oplus \mathbb{R}\big(e_{n-1}\wedge e_n+ a\id_{\mathbb{R}^{1,n+1}}\big)$$
for some non-zero $a\in\mathbb{R}$. Let $R\in\mathscr{R}(\mathfrak{g})$. Then it may be described as in the proof of Theorem~\ref{ssrTh}. Let $X_1\in V_1$.
Using~\eqref{fR3}, we obtain
$$ R_3(X_1, e_{n-1}) = Z(X_2) \wedge X_1 + Z^\ast(X_1) \wedge e_{n-1} + (Z(e_{n-1}), X_1) \id_{\mathbb{R}^{1,n+1}},$$
$$ R_3(X_1, e_{n}) = Z(X_2) \wedge X_1 + Z^\ast(X_1) \wedge e_{n} + (Z(e_{n}), X_1) \id_{\mathbb{R}^{1,n+1}}.$$
Note that $Z^\ast(X_1) \wedge e_{n-1}=(Z^\ast(X_1),e_n) e_n\wedge e_{n-1}$, and the first equality implies 
$$-a(Z^\ast(X_1),e_n)=(Z(e_{n-1}), X_1).$$
Consequently,
$$-a(Z(e_n),X_1)=(Z(e_{n-1}), X_1).$$
Likewise, the second equation implies 
$$a(Z(e_{n-1}),X_1)=a(Z(e_{n}), X_1).$$
We conclude that
$$(Z(e_n),X_1)=-a^2(Z(e_n),X_1),$$
i.e., $Z=0$. Thus, $R=R_1$ takes values in $\mathfrak{so}(1,k+1)$, and $\mathfrak{g}$ is not a Berger algebra.
 
{\bf Case 2.}
Now suppose that $\pr_{\mathfrak{so} (1, k + 1)} \mathfrak{g}$ is not irreducible. If $\pr_{\mathfrak{so} (1, k + 1)} \mathfrak{g}$ preserves a proper non-degenerate subspace of $\mathbb{R}^{1,k+1}$, then $\mathfrak{g}$ preserves an orthogonal direct sum of three non-trivial subspaces of $\mathbb{R}^{1,n+1}$, and
from  Corollary~\ref{sssrCor} it follows that $\mathfrak{g}$ is not a Berger algebra.  
Thus, $\pr_{\mathfrak{so} (1, k + 1)} \mathfrak{g}\subset\mathfrak{so} (1, k + 1)$ is weakly irreducible and it is not irreducible. Hence $\mathfrak{g}$  preserves an isotropic line in $\mathbb{R}^{1, k + 1}$. 
Choose a Witt basis $p, e_1, \ldots , e_n, q $ in such a way, that $\pr_{\mathfrak{so} (1, k + 1)} \mathfrak{g} \subset \mathfrak{so} (1, n + 1)_{\mathbb{R} p}$, then $$\mathfrak{g} \subset \mathfrak{so} (1, k + 1)_{\mathbb{R} p} \oplus \mathfrak{so} (n - k) \oplus \mathbb{R} \id_{\mathbb{R}^{1,n+1}}.$$
We assume that $\mathbb{R}^{1, k + 1}$ and $\mathbb{R}^{n - k}$ have bases $p, e_1, \ldots , e_k, q $ and $e_{k+1}, \ldots , e_n$, correspondingly. 
Consider $R \in \mathscr{R} (\mathfrak{g})$ as in the proof of Theorem~\ref{ssrTh}. 
It holds $R = R_1 + R_2 + R_3$. 
Note that $R_1$ takes values in $\mathfrak{g} \cap \mathfrak{so} (1, k + 1)$, and $R_2$ takes values in $\mathfrak{g} \cap \mathfrak{so} (n - k)$. 
By Lemma~\ref{coRp1Lemma} and the proof of Theorem~\ref{ssrTh}, the image of the map $Z : \mathbb{R}^{n - k} \rightarrow \mathbb{R}^{1, k + 1}$ in the formula for $R_3$ is contained in $\mathbb{R} p$, i.e., there exists a linear map $\alpha : \mathbb{R}^{n - k} \rightarrow \mathbb{R}$ such that $Z(X_2) = \alpha (X_2) p$, where $X_2 \in \mathbb{R}^{n - k}$. 
From the relation
\begin{equation*}
(Z^{\ast}(X_1), X_2) = (X_1, Z(X_2)) = (X_1, \alpha(X_2) p) = \alpha(X_2)(X_1,  p)
\end{equation*}	
it follows that $Z^{\ast}(p) = 0$, $Z^{\ast}(e_i) = 0$, $i = 1, \ldots, k$. 
And from the formula
\begin{equation*}
R_3 (X_1, X_2) = \alpha(X_2) p \wedge X_1 + Z^{\ast}(X_1) \wedge X_2 + \alpha(X_2)(X_1,  p) \id_{\mathbb{R}^{1,n+1}}
\end{equation*}	
we get
\begin{align*}
\begin{split}
&R_3 (p, X_2) = 0, \\
&R_3 (e_i, X_2) = \alpha(X_2) p \wedge e_i, \quad 1 \leqslant k \leqslant n,\\
&R_3 (q, X_2) = \alpha(X_2) p \wedge q + Z^{\ast}(q) \wedge X_2 + \alpha(X_2) \id_{\mathbb{R}^{1,n+1}}.
\end{split}
\end{align*}
Since we assume that $\mathfrak{g} \not\subset \mathfrak{so} (1, n + 1)$ and that $\mathfrak{g}$ is a Berger algebra, then there exists $R_3 \in \mathscr{R} (\mathfrak{g})$ such that $\alpha \neq 0$, and, hence, $p \wedge \mathbb{R}^{k} \subset \mathfrak{g}$.

For convenience, suppose that $\alpha (e_{k+1}) = a \neq 0$, ${\alpha (e_{k+2}) = \ldots = \alpha (e_{n}) = 0}$. In this case, $Z^{\ast}(q) = a e_{k+1}$ and we obtain the following relations:
\begin{align*}
\begin{split}
&R_3 (q, e_{k+1}) = a(p \wedge q + \id_{\mathbb{R}^{1,n+1}}) \in \mathfrak{g}, \\
&R_3 (q, e_j) =a e_{k+1} \wedge e_j \in \mathfrak{g}, \quad k + 2 \leqslant j \leqslant n.
\end{split}
\end{align*}
We conclude that $p \wedge q + \id_{\mathbb{R}^{1,n+1}}\in\mathfrak{g}$. We claim also that $\mathfrak{g}$ contains $\mathfrak{so} (n - k)$. If $n-k=1$, there is nothing to prove. Suppose that $n-k\geq 2$.
If $j, i \neq k + 1$, then $$[e_{k+1} \wedge e_j, e_{k+1} \wedge e_i] = e_j \wedge e_i \in \mathfrak{g}.$$ This proves the claim.

Thus, $$\mathfrak{g} \subset \mathbb{R} \id_{\mathbb{R}^{1,n+1}} \oplus \mathbb{R}p\wedge q \oplus \mathfrak{so} (k) \oplus \mathfrak{so} (n - k) \ltimes p\wedge\mathbb{R}^{k},$$ and  $\mathfrak{g}$ contains $p \wedge \mathbb{R}^{k}$, $\mathfrak{so} (n - k)$ and $\mathbb{R} ( p \wedge q + \id_{\mathbb{R}^{1,n+1}} )$. 
Consider the projection $\pr_{\mathfrak{so} (1, k + 1)} \mathfrak{g} \subset \mathfrak{so} (1, k + 1)_{\mathbb{R} p}$, it is weakly irreducible, contains $p \wedge \mathbb{R}^{k}$ and  $\mathbb{R} p \wedge q$. 
Therefore, $\pr_{\mathfrak{so} (1, k + 1)} \mathfrak{g}$ is of type~1, i.e., $$\pr_{\mathfrak{so} (1, k + 1)} \mathfrak{g} = \mathbb{R}p\wedge q \oplus \mathfrak{k} \ltimes p\wedge \mathbb{R}^{k},$$ where $\mathfrak{k}\subset\mathfrak{so}(k)$ is a subalgebra. According to Theorem~\ref{prsoTh}, $\mathfrak{k} \subset \mathfrak{so} (k)$ is the holonomy algebra of a Riemannian manifold.

Thus, $$\pr_{\mathfrak{so} (1, n + 1)} \mathfrak{g}=
\mathbb{R}p\wedge q \oplus \mathfrak{k}\oplus\mathfrak{so}(n-k) \ltimes p\wedge \mathbb{R}^{k},$$
and $\mathfrak{g}$ contains  $\mathbb{R} ( p \wedge q + \id_{\mathbb{R}^{1,n+1}} )$.
Using arguments of Section \ref{bCasesSec}, it is easy to see that this is the case only for the following two algebras:

$$\mathfrak{g}=
\mathbb{R}\id_{\mathbb{R}^{1,n+1}}\oplus\mathbb{R}p\wedge q \oplus \mathfrak{k}\oplus\mathfrak{so}(n-k) \ltimes p\wedge \mathbb{R}^{k},$$
$$\mathfrak{g}=
\mathbb{R}(\id_{\mathbb{R}^{1,n+1}}+p\wedge q) \oplus\{\theta(A)\id_{\mathbb{R}^{1,n+1}}+A|A\in  \mathfrak{k}\oplus\mathfrak{so}(n-k)\} \ltimes p\wedge \mathbb{R}^{k},$$
where $\theta:\mathfrak{k}\oplus\mathfrak{so}(n-k)\to\mathbb{R}$ is a linear map vanishing on the commutant of $\mathfrak{k}\oplus\mathfrak{so}(n-k)$.

If $k\geq 1$, then the first algebra is the third algebra from the statement of the theorem. If $k=0$, 
then the first algebra is the first algebra from the statement of the theorem.

We claim that the second algebra is a Berger algebra only if $\theta=0$. Indeed,
again we may decompose each $R\in\mathscr{R}(\mathfrak{g})$ as the sum $R=R_1+R_2+R_3$. The tensor $R_3$ may be non-zero only  on bivectors from $\mathbb{R}^{1,k+1}\wedge \mathbb{R}^{k}$ and it takes values in 
$\mathbb{R}(\id_{\mathbb{R}^{1,n+1}}+p\wedge q) \oplus\mathfrak{so}(n-k)\ltimes p\wedge \mathbb{R}^{k}$;
the tensor $R_1$ may be non-zero only  on bivectors from $\Lambda^2\mathbb{R}^{1,k+1}$ and it takes values in $\mathfrak{g}\cap \mathfrak{so}(1,k+1)$; $R_1$ may be non-zero only  on bivectors from $\Lambda^2\mathbb{R}^{n-k}$ and it takes values in $\mathfrak{g}\cap \mathfrak{so}(n-k)$. This implies that if $\theta(A)\neq 0$ for some $A$, then $\theta(A)+A$ does not belong to $L(\mathscr{R}(\mathfrak{g}))$. Thus, $\theta=0$.
The obtained  algebra is the second algebra from the statement of the theorem.

The theorem is proved.
\hfill\qedsymbol
	
\section{Proof of Theorem~\ref{class3Th}}
\label{3ThProofSec}

For convenience, we will use the notation of Theorem~\ref{RcoTh} for the components of an algebraic curvature tensor~$R$. 
Consider the cases as in Section~\ref{bCasesSec}.

{\bf Case a.}
%{\bf Cases a.1--a.3.}
Let $\mathfrak{g} = \mathbb{R} \id_{\mathbb{R}^{1,n+1}} \oplus \overline{\mathfrak{g}}$, where $\overline{\mathfrak{g}}$ of the type 1, 2 or 3 from Theorem~\ref{soClassTh}. 
If $\mathfrak{g}$ is a Berger algebra, then according to Theorem~\ref{prsoTh}, $\mathfrak{h}$ is the holonomy algebra of a Riemannian manifold. 
Consider the algebraic curvature tensor $R$, which is defined by the condition $\mu = 1$ and all other elements are zeros. 
Since ${R \in \mathscr{R} (\mathfrak{g})}$, it holds $\mathbb{R} \id_{\mathbb{R}^{1,n+1}} \subset L(\mathscr{R} (\mathfrak{g}))$. 
Since $\overline{\mathfrak{g}}$ is a Berger algebra and $\mathfrak{g} = \mathbb{R} \id_{\mathbb{R}^{1,n+1}} \oplus \overline{\mathfrak{g}}$, $\mathfrak{g}$ is a Berger algebra.

%{\bf Case a.4.}
Now assume that $\mathfrak{g} = \mathbb{R} \id_{\mathbb{R}^{1,n+1}} \oplus \overline{\mathfrak{g}}$, where $\overline{\mathfrak{g}}$ is of type 4. Let $R \in \mathscr{R} (\mathfrak{g})$. Since $\mathfrak{h} \subset \mathfrak{so} (m)$, it is a proper subalgebra of $\mathfrak{so}(n)$. As we remarked in Section \ref{CurvTensorSec}, $R$ may be described in terms of Theorem~\ref{RhTh}.
In particular, $A_0 = 0$ and $Z_0 = 0$. 
Choose an arbitrary non-zero element $V \in \mathbb{R}^{n - m}$. It holds $$R(p, V) = (0, 0, 0, - \mu V).$$ Since $-\mu V = \psi (0) = 0$, it holds $\mu = 0$.

We are going to show that $\gamma = 0$. 
Since $X_0 \in \mathbb{R}^{m}$, then $\gamma (U) = (U, X_0) = 0$ for every $U \in \mathbb{R}^{n - m}$, i.e., $\gamma \big|_{\mathbb{R}^{n - m}} = 0$. 
Now we take $U \in \mathbb{R}^{m}$, $V \in \mathbb{R}^{n - m}$ and arbitrary $X, Y \in \mathbb{R}^{n}$, then for $S \in \mathscr{R} (\mathfrak{h}) $, the formula 
$$(S(U, V) X, Y) = (S(X, Y) U, V)$$ 
%from~\cite{Gal15} 
holds. 
Since $(S(X, Y) U, V) = 0$, it holds $S(U, V) = 0$. 
It follows that 
$$\pr_{\mathbb{R}^{n - m}} (L(U, V)) = \psi (S(U, V)) = 0,$$ but 
$$L(U, V) = P(V) U + \gamma (V) U - P(U) V - \gamma (U) V,$$ therefore $\pr_{\mathbb{R}^{n - m}} (L(U, V)) = - \gamma (U) V = 0$. 
From the last relation it follows that $\gamma \big|_{\mathbb{R}^{m}} = 0$. 
Thus, $\gamma = 0$. 
We conclude that  $L(\mathscr{R} (\mathfrak{g})) \subset \mathfrak{so} (1, n + 1)$, and $\mathfrak{g}$ is not a Berger algebra.

\textbf{Case b.1.} 
Rewrite $\theta : \mathbb{R} \oplus \mathfrak{h} \rightarrow \mathbb{R}$ in the form $\theta = \theta_1 \oplus \theta_2$, where $\theta_1 = \theta \big|_\mathbb{R}$ and $\theta_2 = \theta \big|_\mathfrak{h}$. 
Then, 
$$\mathfrak{g} = \big( \{ (\theta_1 (a), a, 0, 0) \mid a \in \mathbb{R} \} \oplus \{ (\theta_2 (A), 0, A, 0 ) \mid A \in \mathfrak{h} \} \big) \ltimes \mathbb{R}^{n}.$$ 
Choose the algebraic tensor $R$ defined by the condition $\lambda = 1$, $\mu = \theta_1 (1)$ and all other elements are zeros. 
Since ${R \in \mathscr{R} (\mathfrak{g})}$, then $\{ (\theta_1 (a), a, 0, 0) \mid a \in \mathbb{R} \} \subset L(\mathscr{R} (\mathfrak{g}) )$.

We will choose $R$ as follows. Take $P \in \mathscr{P} (\mathfrak{h})$ and define $\gamma (U) := \theta_2 (P(U))$. 
Choose $X_0$ such that $(U, X_0) = \gamma (U)$. 
We suppose all other elements from definition of $R$ to be zero. 
According to Theorem~\ref{prsoTh}, $\mathfrak{h}$ is the holonomy algebra of a Riemannian manifold, and, hence, it is a weak Berger algebra~\cite{Leistner07}. 
Therefore, elements $P(U)$ generate $\mathfrak{h}$, at the same time 
$$R(U, q) = (\theta_2 (C(U)), 0, C(U), 0 ).$$
Going through all $P \in \mathscr{P} (\mathfrak{h})$ and $U \in \mathbb{R}^{n}$ we get $\{ (\theta_2 (A), 0, A, 0 ) \mid A \in \mathfrak{h} \}$. 
Hence $\mathfrak{g} \subset L(\mathscr{R} (\mathfrak{g}))$, i.e., $\mathfrak{g} $ is a Berger algebra. In notation of the statement of Theorem~\ref{class3Th} it is the Lie algebra  $\mathfrak{g}^{\alpha, \theta, 1,\mathfrak{h}}$ with $\theta = \theta_2$, $\alpha = \theta_1 (1)$.

\textbf{Case b.2.} 
Assume that $R \in \mathscr{R} (\mathfrak{g})$. One can represent $\theta : \mathfrak{h} \oplus \mathbb{R}^{n_0} \rightarrow \mathbb{R}$ in the form $\theta = \theta_1 \oplus \theta_2$, where $\theta_1 = \theta \big|_\mathfrak{h}$ and $\theta_2 = \theta \big|_{\mathbb{R}^{n_0}}$. 
Then, 
$$\mathfrak{g} = \big( \{ (\theta_1 (A), 0, A, 0) \mid A \in \mathfrak{h} \} \oplus \{ (\theta_2 (X), 0, 0, X ) \mid X \in \mathbb{R}^{n_0} \} \big) \ltimes (\mathbb{R}^{n_0})^{\perp}.$$ 
We will show, that $\mathfrak{g}$ is a Berger algebra only if $\theta_2 = 0$.

Suppose that $\mathfrak{g}$ is a Berger algebra and $\theta_2 \neq 0$. 
Then there exists an element $V \in \mathbb{R}^{n_0}$ such that $\theta_2 (V) \neq 0$. From the equality $\theta_2 (-\mu V) = 0$ it follows that $\mu = 0$. From the expressions for $R(p,q)$, $R(U,V)$ and $R(U,q)$ we obtain the following relations:
\begin{gather} 
\label{b2eq1} \theta_2 (X_0) = 0,\\
\label{b2eq2} \gamma (U) = (U, X_0),\\
\label{b2eq3} \theta_2 (L(U, V)) = 0.
%\gamma (U) = \theta_2 ( \pr_{\mathbb{R}^{n_0}} K(U) ).
\end{gather}
If $\theta_2 \neq 0 $, then $\dim \ker \theta_2 = n_0 - 1$. 
Choose a basis $e_1, \ldots, e_{n_0}, \ldots , e_n$ such that $\theta_2 (e_1) \neq 0$ and $\theta_2 (e_i) = 0$ for $i \geqslant 2$.	
Due to the formulas~\eqref{b2eq1} and~\eqref{b2eq2}, we have $X_0 \in \Span \{ e_2, \ldots , e_{n_0} \}$ and $\gamma (e_1) = (e_1, X_0) = 0$. 
Substituting to the formula~\eqref{b2eq3} $U = e_1$ and $V = e_i$ for $i \geqslant 2$, we obtain 
$$\theta_2 (L(e_1, e_i)) = \theta_2 (\gamma (e_i) e_1) = \gamma (e_i) \theta_2 (e_1) = 0.$$ 
Since $\theta_2 (e_1) \neq 0$, then $\gamma (e_i) = 0$. So, $\gamma = 0$ and, hence, $\mathfrak{g} $ is not a Berger algebra whenever $\theta_2 \neq 0$. When $\theta_2 = 0$, the algebra $\mathfrak{g} $ is a Berger algebra, since $(\theta (A), 0, A, 0 ) \subset L(\mathscr{R} (\mathfrak{g}) )$.

\textbf{Case b.3.} 
The proof is similar as for the case b.1.

\textbf{Case b.4.} We assume that $\bar{\mathfrak{g}}$ is of type 4.
It is clear that $L(\mathscr{R}(\mathfrak{g}))\subset L(\mathscr{R}(\mathbb{R}\id_{\mathbb{R}^{1,n+1}}\oplus\bar{\mathfrak{g}}))$.
Above in Case a.4 we have seen that $L(\mathscr{R}(\mathbb{R}\id_{\mathbb{R}^{1,n+1}}\oplus\bar{\mathfrak{g}}))\subset\mathfrak{so}(1,n+1)$. This shows that $\mathfrak{g}$ is not a Berger algebra.

Theorem~\ref{class3Th} is proved.\hfill\qedsymbol

\section{Realization of Berger algebras}
\label{RealizationnSec}

In this section we prove Theorem~\ref{realizationTh}. 
To do this we show that for every Berger algebra $\mathfrak{g} \subset \mathfrak{co}(1, n+1)$ obtained above there exists a Weyl connection $\nabla$ 
such that the holonomy algebra of $\nabla$ is isomorphic to $\mathfrak{g}$.

Let $(M,c,\nabla)$ be a Weyl manifold. Fix a metric $g\in c$. There exists a 1-form $\omega$ such that $$\nabla g = 2 \omega \otimes g.$$
 Let $\overline{\nabla}$ be the Levi-Civita connection corresponding to $g$.
 It is well-known that the connection $\nabla$ is uniquely defined by $g$ and $\omega$ through the formulas 
$$\nabla = \overline{\nabla} + K,$$
$$g(K_X(Y),Z)=g(Y,Z)\omega(X)+g(X,Z)\omega(Y)-g(X,Y)\omega(Z).$$
In what follows we will assume that $g$ is a Walker metric on $\mathbb{R}^{n+2}$.
This means that 
we fix coordinates $v,x^1,\dots,x^n,u$ and set
\begin{equation}\label{metric} g=2dvdu+h+2 Adu +H (du)^2,\end{equation}
where $$h=
h_{ij}dx^idx^j,\quad\partial_vh_{ij}=0,$$ is a $u$-family of Riemannian metrics on $\mathbb{R}^n$, $$A=A_idx^i$$ is a 1-form, $\partial_v A_i=0$, and $H$ is an arbitrary function. This metric has the property that the vector field $\partial_v$ generates a $\overline{\nabla}$-parallel distribution of isotropic lines. We would like that this distribution is parallel also with respect to the connection $\nabla$. This is guarantied by the condition
$$\omega=f du$$
for some function $f$.   
Then $K$ is given by \begin{align}
\label{KEq}
\begin{split}
K &= f \cdot (\partial_v \wedge \cdot + g (\partial_v, \cdot) \id_{\mathbb{R}^{n+2}})\quad \text{i.e.}\\
K_{\partial_a} \partial_b &= f \cdot ((\partial_v \wedge \partial_a) \partial_b + g (\partial_v, \partial_a) \partial_b).
\end{split}
\end{align}
Denote by $\Gamma_{a}$ the matrix $(\Gamma^{c}_{ba})_{b,c = v,1,\ldots , n, u}$ of the Christoffel symbols. For the Christoffel symbols of the connection $\nabla$ it holds 
$$\Gamma_a = \overline{\Gamma}_a + K_a, \quad \overline{\Gamma}_a = (\overline{\Gamma}^{c}_{ba})_{b,c = v,1,\ldots , n, u},  \quad K_a = (K^{c}_{ba})_{b,c = v,1,\ldots , n, u},\quad K_{\partial_a} \partial_b = K^{c}_{ba} \partial_c,$$
where $\overline{\Gamma}_a$ are the Christoffel symbols for the Levi-Civita connection $\overline{\nabla}$ of the metric~$g$ and $K_a$ is given by the formula~\eqref{KEq}.

{\bf Realization of algebras from Theorem \ref{class2Th}.}
The first algebra represents the most general case of the conformal products in the sense of \cite{Belgun11} and   can be realized in the same way as in~\cite{Grabbe14}. 
Now, suppose that $\mathfrak{g}$ is the second or the third algebra from Theorem~\ref{class2Th}.

First of all consider  the metric
\begin{equation}
\label{metricEq2}
g = 2 d v d u +h+H(du)^2,\quad h=\sum_{i,j = 1}^{n} h_{ij} d x^i d x^j,
\end{equation}
$$h_{ij} = h_{ij}(x^1,\ldots , x^n, u),\quad H=H(v).$$
Let $\overline{\nabla}$ be the Levi-Civita connection of $g$, and let $$\omega=fdu$$ for some function $f$. As it is explained above, $g$ and $\omega$ define a Weyl connection $\nabla$.
The Christoffel symbols for the connection $\nabla$ are as follows
\begin{align*}
\label{GammaEq2}
\Gamma_v &= \begin{pmatrix} 
0 & 0 & \frac{1}{2} \partial_v H \\
0 & 0 & 0 \\
0 & 0 & 0 
\end{pmatrix}, \\
\Gamma_i &= 
\begin{pmatrix} 
0 & \left( - \frac{1}{2} \partial_u h_{i1},\ldots,- \frac{1}{2} \partial_u h_{in} \right) & 0 \\
0 & \widetilde{\Gamma}_i & \left( \left( \frac{1}{2} h^{km} \partial_u h_{im} \right)^{n}_{k = 1} \right)^{t} \\
0 & 0 & 0 
\end{pmatrix} + f 
\begin{pmatrix} 
0 & (-h_{i1},\ldots,-h_{in}) & 0 \\
0 & 0 & e_i \\
0 & 0 & 0 
\end{pmatrix}, \\
\Gamma_u &= 
\begin{pmatrix} 
\frac{1}{2}\partial_v H & 0 & \frac{1}{2} H \partial_v H \\
0 & \left(  \frac{1}{2} h^{km} \partial_u h_{lm} \right)_{k, l = 1,\ldots,n} & 0 \\
0 & 0 & -\frac{1}{2}\partial_v H
\end{pmatrix} + f 
\begin{pmatrix} 
0 & 0 & -H \\
0 & E_n & 0 \\
0 & 0 & 2 
\end{pmatrix},
\end{align*}
where $\widetilde{\Gamma}_i$ are the Christoffel symbols of $h$ and $(h^{ij})_{i, j = 1,\ldots, n}$ is the inverse of the matrix $(h_{ij})_{i, j = 1,\ldots, n}$. 

Let $(N,h)$ be a Riemannian manifold of dimension $k$ with
the holonomy algebra $\mathfrak{k}\subset\mathfrak{so}(k)$.
Consider the manifold
$$M=\mathbb{R}\times N\times \mathbb{R}^{n-k}\times \mathbb{R}.$$
We will consider coordinates $v,x^1,..., x^k,x^{k+1},...,x^n,u$, where 
$x^1,..., x^k$ are local coordinates on $N$. We choose the function $f$  as
$$f=x^{k+1}.$$
Let $$h=h^1+h^2,$$
where $$h^2=\sum_{i,j = k+1}^{n} h^2_{ij} d x^i d x^j,\quad h^2_{ij} = e^{-2fu}\delta_{ij}.$$
We obtain the Weyl manifold $(M, [g], \nabla)$.
The above formulas for the Christoffel symbols show that the distribution generated by the vector fields $\partial_{x^{k+1}},...,\partial_{{x^n}}$ is parallel. Similarly, the distribution generated by the vector field 
$\partial_{v}$ is parallel.

Fix a point $x_0\in N$. Consider the point $a\in M$ defined by $x_0$ and $0\in  
\mathbb{R}\times \mathbb{R}^{n-k}\times \mathbb{R}$.
We identify the tangent space $T_aM$ with the Minkowski space $\mathbb{R}^{1,n+1}$ and introduce a Witt basis $p,e_1,...,e_n,q$ in such a way that $e_1,...,e_k$ is an orthonormal basis of $T_{x_0}N$, $p=(\partial_v)_a$, 
$e_{k+1}=(\partial_{x^{k+1}})_a$,...,$e_{n}=(\partial_{x^{n}})_a$, $q=(\partial_u)_a$. We see that the holonomy algebra $\mathfrak{hol}_a(\nabla)$
preserves the isotropic line $\mathbb{R}p$ and the vector subspace $\mathbb{R}^{n-k}\subset\mathbb{R}^{1,n+1}$.  
The direct simple computations show that 
$$R (\partial_{k+1}, \partial_u)_{a} = 
\begin{pmatrix} 
0 & 0 & 0\\
0 & E_n & 0\\
0 & 0 & 2
\end{pmatrix}, 
\hphantom{R (\partial_{k+1}, \partial_u)_{0}}.$$ This implies that 
the Weyl structure is non-closed. From the classification Theorem~\ref{class2Th}  it follows that the  holonomy algebra $\mathfrak{hol}_a(\nabla)$ is one of the following algebras listed in the theorem: the first algebra with $k=0$, the second algebra, or the third algebra.

Assume that $H=0$. We see that the vector field $\partial_v$ is parallel, and the holonomy algebra annihilates the vector $p$. We conclude that $\mathfrak{hol}_a(\nabla)$ is the second algebra from Theorem~\ref{class2Th}.

Finally let $H=v^2$. In this case it holds 
$$R (\partial_{v}, \partial_u)_{a} = 
\begin{pmatrix} 
1 & 0 & 0\\
0 & 0 & 0\\
0 & 0 & -1
\end{pmatrix}.$$
This shows that  $\mathfrak{hol}_a(\nabla)$ acts non-trivially on the line $\mathbb{R}p$. If $k=0$, then $\mathfrak{hol}_a(\nabla)$ is the first algebra from Theorem \ref{class2Th}. If $k\geq 1$, then $\mathfrak{hol}_a(\nabla)$ is the last algebra Theorem \ref{class2Th}.

{\bf Realization of algebras from Theorem \ref{class3Th}.}
Let $\mathfrak{g}\subset\mathfrak{co}(1,n+1)_{\mathbb{R}p}$ be one of the algebras listed  in  Theorem \ref{class3Th}.
Let $\mathfrak{h}\subset\mathfrak{so}(n)$ be the corresponding subalgebra, which is the holonomy algebra of a Riemannian manifold. First we fix our attention on some properties of $\mathfrak{h}$. 

According to the de~Rham Theorem, there exists an orthogonal decomposition 
$$\mathbb{R}^{n} = \mathbb{R}^{n_0} \oplus \mathbb{R}^{n_1} \oplus \ldots \oplus \mathbb{R}^{n_r}$$ 
and the corresponding decomposition
$$\mathfrak{h} = \{0\} \oplus \mathfrak{h}_1 \oplus \ldots \oplus \mathfrak{h}_r$$ 
into the direct sum of ideals such that $\mathfrak{h}$ acts trivially on $\mathbb{R}^{n_0}$, $\mathfrak{h}_i (\mathbb{R}^{n_j}) = 0$ for $i \neq j$, and $\mathfrak{h}_i \subset \mathfrak{so}(n_i)$ is an irreducible subalgebra for $1 \leqslant i \leqslant r$. 
Moreover, each Lie algebra $\mathfrak{h}_i$ is the holonomy algebra of a Riemannian manifold. It is known~\cite{Gal05} that it holds
$$\mathscr{P}(\mathfrak{h}) = \mathscr{P}(\mathfrak{h}_1) \oplus \ldots \oplus \mathscr{P}(\mathfrak{h}_r).$$ 
We will assume that the basis 
%$e_1, \ldots , e_n$
of $\mathbb{R}^{n}$ is compatible with this decomposition of $\mathbb{R}^{n}$.
We will need the following lemma from~\cite{Gal15}.

\begin{lemma}
	\label{PRnLemma} 
	For an arbitrary holonomy algebra $\mathfrak{h}\subset\mathfrak{so}(n)$ of a Riemannian manifold there exists a $P\in\mathscr{P}(\mathfrak{h})$ such that the vector space ${P(\mathbb{R}^n)\subset\mathfrak{h}}$ generates $\mathfrak{h}$ as  
	Lie algebra.
\end{lemma}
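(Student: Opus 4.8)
The plan is to reduce to the irreducible case and then settle it by a genericity argument backed by the classification of irreducible Riemannian holonomy algebras. First I would invoke the de~Rham decomposition already recalled above: $\mathbb{R}^n=\mathbb{R}^{n_0}\oplus\mathbb{R}^{n_1}\oplus\cdots\oplus\mathbb{R}^{n_r}$ with $\mathfrak{h}=\{0\}\oplus\mathfrak{h}_1\oplus\cdots\oplus\mathfrak{h}_r$ a sum of ideals, each $\mathfrak{h}_i\subset\mathfrak{so}(n_i)$ irreducible and itself a Riemannian holonomy algebra, together with the decomposition $\mathscr{P}(\mathfrak{h})=\mathscr{P}(\mathfrak{h}_1)\oplus\cdots\oplus\mathscr{P}(\mathfrak{h}_r)$ from \cite{Gal05}. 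Here each $P_i\in\mathscr{P}(\mathfrak{h}_i)$, viewed inside $\mathscr{P}(\mathfrak{h})$, takes values in $\mathfrak{h}_i$ and annihilates $\mathbb{R}^{n_j}$ for $j\neq i$ (this follows from the weak-curvature Bianchi identity together with $\mathfrak{h}_i(\mathbb{R}^{n_j})=0$). Granting a generating $P_i\in\mathscr{P}(\mathfrak{h}_i)$ with $\langle P_i(\mathbb{R}^{n_i})\rangle_{\mathrm{Lie}}=\mathfrak{h}_i$ for each irreducible factor, I would set $P=\sum_i P_i$. Then $P(\mathbb{R}^n)=\bigoplus_i P_i(\mathbb{R}^{n_i})$, and since the $\mathfrak{h}_i$ are mutually commuting ideals, the Lie algebra generated by $P(\mathbb{R}^n)$ contains each $\langle P_i(\mathbb{R}^{n_i})\rangle=\mathfrak{h}_i$ and hence equals $\mathfrak{h}$ (the flat factor $\mathfrak{h}_0=0$ contributing nothing). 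This reduces everything to the case of irreducible $\mathfrak{h}$.

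For irreducible $\mathfrak{h}$ I would argue that the set $G=\{P\in\mathscr{P}(\mathfrak{h})\mid \langle P(\mathbb{R}^n)\rangle_{\mathrm{Lie}}=\mathfrak{h}\}$ is non-empty. Note that $\langle P(\mathbb{R}^n)\rangle_{\mathrm{Lie}}\neq\mathfrak{h}$ exactly when $P(\mathbb{R}^n)$ is contained in some maximal proper subalgebra $\mathfrak{m}\subset\mathfrak{h}$, i.e. when $P$ lies in the linear subspace $\mathscr{P}_{\mathfrak{m}}:=\mathscr{P}(\mathfrak{h})\cap\Hom(\mathbb{R}^n,\mathfrak{m})$. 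Thus $\mathscr{P}(\mathfrak{h})\setminus G=\bigcup_{\mathfrak{m}}\mathscr{P}_{\mathfrak{m}}$. Since $\mathfrak{h}$ is a Riemannian holonomy algebra it is a weak Berger algebra \cite{Leistner07}, so $\Span\{P(X)\mid P\in\mathscr{P}(\mathfrak{h}),\,X\in\mathbb{R}^n\}=\mathfrak{h}$; consequently no single $\mathscr{P}_{\mathfrak{m}}$ can exhaust $\mathscr{P}(\mathfrak{h})$ (otherwise the span would lie in $\mathfrak{m}\subsetneq\mathfrak{h}$), so each $\mathscr{P}_{\mathfrak{m}}$ is a proper subspace and $G$ is the complement of a union of proper subspaces.

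The main obstacle is that $\mathfrak{h}$ may have infinitely many maximal subalgebras, so properness of each $\mathscr{P}_{\mathfrak{m}}$ does not by itself guarantee that their union misses a point. I would control this using the natural $H$-module structure on $\mathscr{P}(\mathfrak{h})$, where $H$ is the connected subgroup with Lie algebra $\mathfrak{h}$: one has $\mathscr{P}_{\operatorname{Ad}_h\mathfrak{m}}=h\cdot\mathscr{P}_{\mathfrak{m}}$, and, $\mathfrak{h}$ being compact hence reductive, the maximal subalgebras fall into finitely many $\operatorname{Ad}_H$-conjugacy classes (Dynkin--Mostow). Hence $\mathscr{P}(\mathfrak{h})\setminus G$ is a union of finitely many constructible $H$-saturations $H\cdot\mathscr{P}_{\mathfrak{m}}$, and it remains to show each is proper, i.e. that a generic $P$ has $P(\mathbb{R}^n)$ contained in no conjugate of $\mathfrak{m}$. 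This is the crux, and I expect to settle it from the explicit list of irreducible Riemannian holonomy algebras: for the symmetric entries $\mathscr{P}(\mathfrak{h})$ is governed by $\mathscr{R}(\mathfrak{h})$ with $L(\mathscr{R}(\mathfrak{h}))=\mathfrak{h}$, forcing a generic image to generate, while for the finitely many non-symmetric Berger algebras one exhibits an explicit $P$ whose image generates. Combining this with the reduction of the first paragraph completes the proof.
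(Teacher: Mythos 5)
Your reduction to the irreducible case is sound: the de~Rham splitting $\mathfrak{h}=\mathfrak{h}_1\oplus\dots\oplus\mathfrak{h}_r$ together with $\mathscr{P}(\mathfrak{h})=\mathscr{P}(\mathfrak{h}_1)\oplus\dots\oplus\mathscr{P}(\mathfrak{h}_r)$ does reduce the lemma to irreducible $\mathfrak{h}$, and summing generating $P_i$'s works because the $\mathfrak{h}_i$ are mutually commuting ideals. But the irreducible case, which is the entire content of the lemma, is set up and then deferred, not proved. Note that the paper itself offers no proof to compare against: the lemma is quoted from \cite{Gal15}, and the proof there rests on the classification of irreducible Riemannian holonomy algebras (Berger's list together with the holonomy algebras of irreducible symmetric spaces) and on the explicit computation of $\mathscr{P}(\mathfrak{h})$ for each entry carried out in \cite{Gal05} and \cite{Leistner07}. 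That case-by-case analysis is exactly what you postpone with ``I expect to settle it from the explicit list,'' so the crux of the lemma is missing from your argument.

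Moreover, the genericity scheme as you set it up has a hole that the finiteness of conjugacy classes does not repair. The weak Berger property gives only that each single $\mathscr{P}_{\mathfrak{m}}=\mathscr{P}(\mathfrak{h})\cap\Hom(\mathbb{R}^n,\mathfrak{m})$ is a proper linear subspace. A conjugacy class of maximal subalgebras is in general a positive-dimensional family, and a union of proper subspaces indexed by a positive-dimensional family can cover the whole space (the lines through the origin cover $\mathbb{R}^2$). Hence properness of the saturation $H\cdot\mathscr{P}_{\mathfrak{m}}$ requires an actual dimension estimate, say $\dim\mathscr{P}_{\mathfrak{m}}+\dim\bigl(\mathfrak{h}/\mathfrak{n}_{\mathfrak{h}}(\mathfrak{m})\bigr)<\dim\mathscr{P}(\mathfrak{h})$, and nothing in your proposal supplies it; obtaining it would again require the explicit knowledge of $\mathscr{P}(\mathfrak{h})$ and of the maximal subalgebras, i.e.\ the classification. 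Finally, your one-line treatment of the symmetric case is a non sequitur: the equality $L(\mathscr{R}(\mathfrak{h}))=\mathfrak{h}$ says that the images of \emph{all} curvature tensors together span $\mathfrak{h}$; it does not imply that a \emph{single} $P\in\mathscr{P}(\mathfrak{h})$ (for a symmetric Berger algebra these are exactly the maps $R(Z,\cdot)$ with $R$ the curvature tensor of the symmetric space) has an image that generates $\mathfrak{h}$ as a Lie algebra --- that is a separate statement needing its own argument, e.g.\ via restricted root decompositions for a regular $Z$. As it stands, the proposal is a correct reduction plus a plan, not a proof.
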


Fix a $P\in\mathscr{P}(\mathfrak{h})$ as in the lemma and define the matrices $P_{i} = (P_{ji}^{k})_{j,k = 1,\ldots , n}$ such that $P(e_i) e_j = P_{ji}^{k} e_k$.

Let $v, x^1, \ldots, x^n, u$ be the standard coordinates on $\mathbb{R}^{n + 2}$.
Consider the metric $g$ given by the formula
\begin{equation}
\label{metricEq}
g = 2 dv du + \sum_{i = 1}^{n} (dx^i)^2 + 2 \sum_{i = 1}^{n} A_i dx^i du + H \cdot (du)^2,
%i = n_0 + 1
\end{equation}
where
$$
A_i = \frac{1}{3} (P_{jk}^{i} + P_{kj}^{i}) x^j x^k
$$
and $H$ is a function that will depend on the type of the holonomy algebra that we wish to obtain. Let $\overline{\nabla}$ be the Levi-Civita connection corresponding to $g$.
Let  $p, e_1, \ldots , e_n, q$ be the field of frames 
$$p = \partial_v,\quad e_i = \partial_i - A_i \partial_v,\quad q = \partial_u - \frac{1}{2} H \partial_v,$$
it defines a Witt basis of the tangent space at each point of $\mathbb{R}^{n + 2}$. Consider the 1-form
$$\omega=fdu,$$ where $f$ is a function that will be specified just below.
As it is explained above, the metric $g$ and the 1-form $\omega$ define the Weyl manifold $(\mathbb{R}^{n+2},[g],\nabla)$, $\nabla=\overline{\nabla}+K$.

For the Lie algebras $\mathbb{R} \id_{\mathbb{R}^{1,n+1}} \oplus \mathfrak{g}^{3,\mathfrak{h},\varphi}$ and $\mathfrak{g}^{\theta,3,\mathfrak{h},\varphi}$ define the numbers $\varphi_i = \varphi (P(e_i))$. 
For the Lie algebras $\mathfrak{g}^{\alpha, \theta, 1,\mathfrak{h}}$, $\mathfrak{g}^{\theta,2,\mathfrak{h}}$ and $\mathfrak{g}^{\theta,3,\mathfrak{h},\varphi}$ define the numbers $\theta_i = \theta (P(e_i))$. 

The following theorem shows that each algebra from Theorem~\ref{class3Th} is the holonomy algebra.

\begin{theorem}
	\label{realizTabTh}
	The holonomy algebra $\mathfrak{hol}_{0}(\nabla)$ of the Weyl connection $\nabla$ depends on the functions $H$ and $f$ in the following way:
	%~(see Table~\ref{tab1})
\end{theorem}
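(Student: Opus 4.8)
The plan is to compute the curvature tensor of $\nabla$ at the origin in the Witt frame $p, e_1, \ldots, e_n, q$, together with as many of its covariant derivatives as are needed, and then, for each pair $(H, f)$ listed in the table, to determine $\mathfrak{hol}_0(\nabla)$ by combining the Ambrose--Singer description of the holonomy algebra with the structural constraints of Theorem~\ref{RcoTh}. First I would note that, independently of $H$ and $f$, the line spanned by $\partial_v = p$ is $\nabla$\ndash parallel: this follows from the Walker form \eqref{metricEq} of the metric together with $\omega = f\,du$ and the formula \eqref{KEq} for $K$, exactly as in the realization of the algebras of Theorem~\ref{class2Th}. Hence $\mathfrak{hol}_0(\nabla) \subset \mathfrak{co}(1, n+1)_{\mathbb{R}p}$, and every curvature operator $R(X,Y)$ has the form described in Theorem~\ref{RcoTh}.

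Next I would compute, from $\Gamma_a = \overline{\Gamma}_a + K_a$, the components of $R$ at the origin in the notation of Theorem~\ref{RcoTh}, expressing $\mu, \lambda, X_0, \gamma, P$ and $S$ through the tensor $P \in \mathscr{P}(\mathfrak{h})$ (which enters via the coefficients $A_i$), the first derivatives of $f$, and the function $H$. The decisive point is that the $\mathfrak{so}(n)$\ndash component of $R(e_i, q)$ is exactly $P(e_i)$, while its $\mathbb{R}\id_{\mathbb{R}^{1,n+1}}$\ndash component is governed by $\gamma(e_i)$, determined by $f$, and its $p \wedge q$\ndash component by the data entering $\varphi$, governed by $H$. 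This is the mechanism by which $f$ produces the map $\theta$ and $H$ produces $\varphi$ and the additional $\mathbb{R}$\ndash generator coming from $R(p,q)$.

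For the lower bound I would use Lemma~\ref{PRnLemma}: the elements $P(e_i)$ generate $\mathfrak{h}$, so the $\mathfrak{so}(n)$\ndash parts of the operators $R(e_i, q)$ already generate $\mathfrak{h}$. Since $\theta$ and $\varphi$ vanish on $[\mathfrak{h}, \mathfrak{h}]$ and
$$[(\theta(A_1), \varphi(A_1), A_1, 0), (\theta(A_2), \varphi(A_2), A_2, 0)] = (0, 0, [A_1, A_2], 0),$$
iterated brackets of these operators reproduce the entire diagonal subalgebra $\{(\theta(A), \varphi(A), A, 0) \mid A \in \mathfrak{h}\}$ with the correct coefficients and without creating spurious diagonal terms. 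The translation ideal $\mathbb{R}^n$ is then recovered from the covariant derivatives $(\nabla_{e_j} R)(e_i, q)$, whose $X$\ndash component sweeps out all of $\mathbb{R}^n$ as $i, j$ vary, and the $\mathbb{R}\id_{\mathbb{R}^{1,n+1}}$\ndash summand or the $\mathbb{R}$\ndash generator attached to $p \wedge q$ is supplied by $R(p,q)$ for the relevant choice of $H$.

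For the upper bound, each row of the table is settled by observing that the chosen $H$ and $f$ force the superfluous components of every $R(X,Y)$ to vanish: for example $H \equiv 0$ makes $p$ itself $\nabla$\ndash parallel and kills the $\lambda$ and $\mu$ terms of $R(p,q)$, while a linear $f$ keeps $\gamma$ proportional to $\theta$; together with the facts that all curvature operators lie in $\mathfrak{co}(1,n+1)_{\mathbb{R}p}$ and project into $\mathfrak{h}$ under $\pr_{\mathfrak{so}(n)}$ (by construction, since $S \in \mathscr{R}(\mathfrak{h})$ and $P \in \mathscr{P}(\mathfrak{h})$), this confines $\mathfrak{hol}_0(\nabla)$ to the claimed algebra. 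Matching the two bounds row by row finishes the proof. I expect the main obstacle to be the covariant\ndash derivative computation producing the full translation ideal $\mathbb{R}^n$, together with the bookkeeping ensuring that the diagonal coefficients assemble into precisely the linear maps $\theta$ and $\varphi$; this is exactly where the hypotheses $\theta\big|_{[\mathfrak{h},\mathfrak{h}]} = 0$, $\varphi\big|_{[\mathfrak{h},\mathfrak{h}]} = 0$ and the generation property of $P$ from Lemma~\ref{PRnLemma} must be combined.
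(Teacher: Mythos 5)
Your overall strategy is the same as the paper's: the connection is polynomial, hence analytic, so $\mathfrak{hol}_0(\nabla)$ is generated by the operators $\nabla_{Z_\alpha}\cdots\nabla_{Z_1}(R(X,Y))_0$; the line $\mathbb{R}\partial_v$ is $\nabla$\ndash parallel, so everything lies in $\mathfrak{co}(1,n+1)_{\mathbb{R}p}$; and one proves the two inclusions $\mathfrak{g}\subset\mathfrak{hol}_0(\nabla)$ and $\mathfrak{hol}_0(\nabla)\subset\mathfrak{g}$. However, two steps of your plan have genuine gaps. The first concerns the translation ideal $\mathbb{R}^n$. You assert that the $X$\ndash components of $(\nabla_{e_j}R)(e_i,q)$ sweep out $\mathbb{R}^n$, but this is never computed and is not how the construction works: in the paper, the flat part $\mathbb{R}^{n_0}$ comes from the curvature itself, $R(\partial_i,\partial_u)_0=\bigl(0,0,0,\tfrac12 e_i\bigr)$ for $i\leqslant n_0$ (this is precisely why $H$ carries the term $\sum_{i\leqslant n_0}(x^i)^2$), while each irreducible summand $\mathbb{R}^{n_\alpha}$ comes from $R(\partial_i,\partial_j)_0=\bigl(0,0,0,\sum_k P^j_{ik}e_k-s_ie_j+s_je_i\bigr)$ with $s_i=\partial_if(0)=\theta_i$, followed by Lie brackets with holonomy elements projecting onto $\mathfrak{h}_\alpha$ and irreducibility of $\mathfrak{h}_\alpha$. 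The serious point your sketch misses is that the $f$\ndash contribution can in principle cancel the $P$\ndash contribution: if $P^j_{ik}=s_i\delta_{jk}-s_j\delta_{ik}$, all these $X$\ndash components vanish identically. Ruling this out is the heart of Lemma~\ref{RnHolLemma}: such a $P$ would force $\mathfrak{h}_\alpha=\mathfrak{so}(n_\alpha)$; for $n_\alpha\geqslant 3$ semisimplicity gives $\theta\big|_{\mathfrak{h}_\alpha}=0$, hence $s=0$, a contradiction, and $n_\alpha=2$ requires a separate computation ($s_1=-s_1\xi^2$). Without this argument the inclusion $\mathbb{R}^n\subset\mathfrak{hol}_0(\nabla)$ is unproven --- and your bracket argument for the diagonal subalgebra $\{(\theta(A),\varphi(A),A,0)\}$ also depends on it, since one needs $\mathbb{R}^n\subset\mathfrak{hol}_0(\nabla)$ to discard the unwanted $X$\ndash components of the operators $R(e_i,q)_0$.

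The second gap is in the upper bound. You confine only the curvature operators $R(X,Y)$ to $\mathfrak{g}$, but $\mathfrak{hol}_0(\nabla)$ is generated by the curvature together with \emph{all} of its covariant derivatives, and these do not automatically stay in $\mathfrak{g}$. The paper closes this by showing that the Christoffel matrices $\Gamma_a$ and the curvature matrices all have the block form~\eqref{RFormEq}, with $\mathfrak{so}(n)$\ndash block taking values in $\mathfrak{h}$, and that this form is stable under the recursion~\eqref{RecurrentEq}; only then does evaluation at $0$ of every iterated derivative land in $\mathfrak{g}$. A minor further inaccuracy: your illustrative remark that ``$H\equiv 0$ makes $p$ parallel'' corresponds to no row of the table (for instance the row realizing $\mathfrak{g}^{\theta,2,\mathfrak{h}}$ has $H=2\sum_i\theta_ix^iv+\sum_{i\leqslant n_0}(x^i)^2$, and there the line $\mathbb{R}p$ is preserved but rescaled by $\theta$), so the row\ndash by\ndash row upper bounds cannot be dispatched by that observation.
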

\begin{table}[H]
	\centering
	%\small \caption{}
	%\renewcommand{\arraystretch}{1.2}
	%\medskip
	\label{tab1}
	\begin{tabular}{|c|c|c|}
	\hline $H$ & $f$ & $\mathfrak{hol}_{0}(\nabla)$ \\
	\hline $\frac{1}{3} v^3 + \sum_{i = 1}^{n_0} (x^i)^2$ & $v$ & $\mathbb{R} \id_{\mathbb{R}^{1,n+1}} \oplus \mathfrak{g}^{1,\mathfrak{h}}$ \\
	\hline $v^2 + \sum_{i = 1}^{n_0} (x^i)^2$ & $v$ & $\mathbb{R} \id_{\mathbb{R}^{1,n+1}} \oplus \mathfrak{g}^{2,\mathfrak{h}}$ \\
	\hline $v^2 + 2 \sum_{i = n_0 + 1}^{n} \varphi_i x^i v + \sum_{i = 1}^{n_0} (x^i)^2$ & $v$ & $\mathbb{R} \id_{\mathbb{R}^{1,n+1}} \oplus \mathfrak{g}^{3,\mathfrak{h},\varphi}$ \\
	\hline $(1 + \alpha ) v^2 + 2 \sum_{i = n_0 + 1}^{n} \theta_i x^i v + \sum_{i = 1}^{n_0} (x^i)^2$ & $\alpha v + \sum_{i = n_0 + 1}^{n} \theta_i x^i$ & $\mathfrak{g}^{\alpha,\theta,1,\mathfrak{h}}$ \\
	\hline $2 \sum_{i = n_0 + 1}^{n} \theta_i x^i v + \sum_{i = 1}^{n_0} (x^i)^2$ & $\sum_{i = n_0 + 1}^{n} \theta_i x^i$ & $\mathfrak{g}^{\theta,2,\mathfrak{h}}$ \\
	\hline $2 \sum_{i = n_0 + 1}^{n} (\theta_i + \varphi_i) x^i v + \sum_{i = 1}^{n_0} (x^i)^2$ & $\sum_{i = n_0 + 1}^{n} \theta_i x^i$ & $\mathfrak{g}^{\theta,3,\mathfrak{h},\varphi}$ \\
	\hline
	\end{tabular}
\end{table}

\begin{remark*}
	Note that the projection of $\mathfrak{hol}_{0}(\nabla)$ to $\mathfrak{so} (1, n + 1)$ may not coincide with $\mathfrak{hol}_{0}(\overline{\nabla})$.
\end{remark*}

\begin{proof}[Proof of Theorem~\ref{realizTabTh}]
Above we fixed an algebra $\mathfrak{g}$ from Theorem~\ref{class3Th} and constructed using it the Weyl connection $\nabla$. Now we are going to prove that  the holonomy algebra of that connection coincides with $\mathfrak{g}$.
Since the metric $g$ and the function $f$ are polynomial, the connection $\nabla$ is analytic and from the proof of Theorem~3.9.2 in~\cite{Kobayashi} it follows that the holonomy algebra $\mathfrak{hol}_{0} (\nabla)$ of the connection $\nabla$ at the point $0$ is generated by the elements of the form
$$ \nabla_{Z_\alpha} \cdots \nabla_{Z_1}(R(X, Y))_{0} \in \mathfrak{co} (1, n + 1),
\quad X, Y, Z_1,\ldots , Z_\alpha \in T_0 M,
\quad \alpha=0,1,2,\ldots .$$
We have the following recursion formula
\begin{equation}
\label{RecurrentEq}
\nabla_{a_\alpha}\cdots\nabla_{a_1} (R (\partial_a, \partial_b)) =
\partial_{a_\alpha}\nabla_{a_{\alpha-1}}\cdots\nabla_{a_1} (R (\partial_a, \partial_b))
+ [\Gamma_{a_\alpha}, \nabla_{a_{\alpha - 1}}\cdots\nabla_{a_1} (R (\partial_a, \partial_b))].
\end{equation}

It is easy to check directly that the Christoffel symbols of the connection $\nabla$ are the following:
\begin{align*}
\label{GammaEq}
\Gamma_v &= \begin{pmatrix} 
0 & 0 & \frac{1}{2} \partial_v H \\
0 & 0 & 0 \\
0 & 0 & 0 \end{pmatrix}, \quad
\Gamma_i = \begin{pmatrix} 
0 & \ast & \ast \\
0 & 0 & \frac{1}{2} F_{i} + ((\delta_{ik} f)^{n}_{k = 1})^{t} \\
0 & 0 & 0 \end{pmatrix}, \\
\Gamma_u &= \begin{pmatrix}
\frac{1}{2} \partial_v H & \ast & \ast \\
0 & \frac{1}{2} F + f E_n & - \frac{1}{2} \grad_n H + \frac{1}{2} (\partial_v H) A \\
0 & 0 & - \frac{1}{2} \partial_v H + 2f \end{pmatrix},
\end{align*}
where 
\begin{gather*}
F = (F_{ij})_{i, j = 1,\ldots , n},\quad F_{ij} = \partial_j A_i - \partial_i A_j = 2 P^{i}_{jk} x^k,\quad F_{i} = (F_{1i}, \ldots, F_{ni})^{t}, \\ 
\grad_n H = (\partial_1 H, \ldots, \partial_n H)^{t},
\quad A = (A_{1},\ldots, A_{n})^{t}.
\end{gather*}
We denote by $\ast$ the elements that does not play a role.
Now, using the formula
\begin{gather*}
R(\partial_a, \partial_b) = \partial_a \Gamma_b - \partial_b \Gamma_a + [\Gamma_a, \Gamma_b],
%\nabla_a (R(\partial_b, \partial_c)) = \partial_a R(\partial_b, \partial_c) + [\Gamma_a, R(\partial_b, \partial_c)].
\end{gather*}
%$$R^{d}_{cab} = \partial_a \Gamma^{d}_{bc} - \partial_b \Gamma^{d}_{ac} + [\Gamma_a, \Gamma_b]^{d}_{c}$$
one can compute the components of the curvature tensor
\begin{align}
R(\partial_v, \partial_i) &= \begin{pmatrix} 
0 & \ast & \ast \\
0 & 0 &  ((\delta_{ik} \partial_v f)^{n}_{k = 1})^{t} \\
0 & 0 & 0 \end{pmatrix}, 
\\
R(\partial_i, \partial_j) &= \begin{pmatrix} 
0 & \ast & \ast \\
0 & 0 &  (( - P^{j}_{ik} + \delta_{jk} \partial_i f - \delta_{ik} \partial_j f)^{n}_{k = 1})^{t} \\
0 & 0 & 0 \end{pmatrix},
\\
R(\partial_v, \partial_u) &= \begin{pmatrix} 
\frac{1}{2} \partial_v^2 H & \ast & \ast \\
0 & (\partial_v f) E_n & - \frac{1}{2} \partial_v \grad_n H + \frac{1}{2} (\partial_v^2 H) A\\
0 & 0 & - \frac{1}{2} \partial_v^2 H + 2 \partial_v f \end{pmatrix},
\\
\label{RiuEq} R(\partial_i, \partial_u) &= \begin{pmatrix} 
\frac{1}{2} \partial_i \partial_v H & \ast & \ast \\
0 & P_i + (\partial_i f) E_n &  Z_i \\
0 & 0 & - \frac{1}{2} \partial_i \partial_v H + 2 \partial_i f \end{pmatrix},
\end{align}
where $Z_i = ((Z_{ik})_{k=1}^{n})^{t}$ is a vector with the coordinates
\begin{equation*}
Z_{ik} = - \frac{1}{2} \partial_i \partial_k H - \delta_{ik} \partial_u f + (\frac{1}{2} \partial_i \partial_v H) A_k + \frac{1}{2} \partial_v H (\partial_i A_k - \frac{1}{2} F_{ik} - \delta_{ik} f) - \frac{1}{4} \sum_{m = 1}^{n} F_{km} F_{im} + \delta_{ik} f^2.
\end{equation*} 
Also we need the following covariant derivative
\begin{gather}
\nabla_v (R(\partial_v, \partial_u)) = \begin{pmatrix} 
\frac{1}{2} \partial_v^3 H & \ast & \ast \\
0 & (\partial_v^2 f) E_n & - \frac{1}{2} \partial_v^2 \grad_n H + \frac{1}{2} (\partial_v^3 H) A\\
0 & 0 & - \frac{1}{2} \partial_v^3 H + 2 \partial_v^2 f \end{pmatrix}.
\end{gather}~\medskip

At the point $0$, $A_i$ and $H$ vanish, consequently, $\nabla_{Z_\alpha} \cdots \nabla_{Z_1}{(R(Y, Z))}$, $\, Y, Z, Z_1,\ldots , Z_\alpha \in T_0 M$ are given by the matrices of the form
$$\begin{pmatrix}     
a + b & X^t & 0 \\
0 & B + b E_n & -X \\
0 & 0 & -a + b
\end{pmatrix},
\quad a, b \in \mathbb{R},
\quad B \in \mathfrak{so} (n),
\quad X \in \mathbb{R}^n,$$
and as above may be written as $(b, a, B, X)$.

\noindent\textbf{Proof of the inclusion $\mathfrak{g} \subset \mathfrak{hol}_{0}$.}
\begin{lemma}
	\label{RnHolLemma}
	It holds $\mathbb{R}^{n} \subset \mathfrak{hol}_{0}(\nabla)$.
\end{lemma}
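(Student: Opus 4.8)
The holonomy algebra $\mathfrak{hol}_0(\nabla)$ is, as recalled just above, generated by the curvature operators $R(\partial_a,\partial_b)_0$ and their iterated covariant derivatives at $0$; for this lemma the zeroth-order operators together with the bracket relations~\eqref{soBrackets} will already suffice. Under the identification of $\mathbb{R}^n\subset\mathfrak{so}(1,n+1)_{\mathbb{R}p}$ with the isotropic translations $(0,0,0,X)$, the plan is to produce enough curvature operators whose only non-zero component is the $X$-part and to show that the resulting vectors span $\mathbb{R}^n$. The argument splits according to whether $\partial_v f(0)$ vanishes. If $\partial_v f(0)\neq0$ --- which happens in the first three rows of the table and for $\mathfrak{g}^{\alpha,\theta,1,\mathfrak{h}}$ with $\alpha\neq0$ --- the computed expression for $R(\partial_v,\partial_i)$ gives $R(\partial_v,\partial_i)_0=\partial_v f(0)\,(0,0,0,e_i)$, so $(0,0,0,e_i)\in\mathfrak{hol}_0$ for every $i$ and $\mathbb{R}^n\subset\mathfrak{hol}_0$ at once.

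The remaining cases are those with $\partial_v f(0)=0$, i.e.\ $f=\sum_{i=n_0+1}^{n}\theta_i x^i$ with $\theta\neq0$. Here I would first treat the flat block $\mathbb{R}^{n_0}$. Using the de~Rham decomposition $\mathbb{R}^n=\mathbb{R}^{n_0}\oplus\mathbb{R}^{n_1}\oplus\cdots\oplus\mathbb{R}^{n_r}$ and the splitting $\mathscr{P}(\mathfrak{h})=\bigoplus_i\mathscr{P}(\mathfrak{h}_i)$, one has $P(\mathbb{R}^{n_0})=0$, so $P_i=0$ for $i\leqslant n_0$. Evaluating~\eqref{RiuEq} at $0$, where the quadratic term $\sum_{i=1}^{n_0}(x^i)^2$ of $H$ contributes $Z_i(0)=-e_i$ and all other entries vanish, yields $R(\partial_i,\partial_u)_0=(0,0,0,-e_i)$ for $i\leqslant n_0$; hence $\mathbb{R}^{n_0}\subset\mathfrak{hol}_0$. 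Next, in this case one checks that the boost part $a$ of $R(\partial_l,\partial_u)_0=(\theta_l,0,P_l,Z_l)$ vanishes for every $l$, so by~\eqref{soBrackets} the bracket of $R(\partial_l,\partial_u)_0$ with any translation $(0,0,0,X)$ equals $(0,0,0,P(e_l)X)$. Thus $\mathfrak{hol}_0\cap\mathbb{R}^n$ is invariant under each $P(e_l)$, and since these generate $\mathfrak{h}$ by Lemma~\ref{PRnLemma}, it is an $\mathfrak{h}$-submodule of $\mathbb{R}^n$. Finally, $R(\partial_i,\partial_j)_0=(0,0,0,X_{ij})$ is a pure translation; for $i,j$ lying in one irreducible block $\mathbb{R}^{n_\alpha}$ ($\alpha\geqslant1$) the vector $X_{ij}$ lies in $\mathbb{R}^{n_\alpha}$, and it is non-zero for a suitable pair because otherwise the $P(e_i)$ with $i$ in that block could not generate the non-trivial algebra $\mathfrak{h}_\alpha$. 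By irreducibility of $\mathbb{R}^{n_\alpha}$, the $\mathfrak{h}$-submodule $\mathfrak{hol}_0\cap\mathbb{R}^n$ then contains all of $\mathbb{R}^{n_\alpha}$, and together with $\mathbb{R}^{n_0}$ this exhausts $\mathbb{R}^n$.

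The routine part of the work is the evaluation of the curvature entries at $0$ from~\eqref{RiuEq} and the formulas preceding it (using $A_i(0)=0$, $F_{ik}(0)=0$ and $f(0)=0$). The main obstacle is the spanning statement in the case $\partial_v f(0)=0$: one must treat the flat block $\mathbb{R}^{n_0}$ separately --- this is precisely why the term $\sum_{i=1}^{n_0}(x^i)^2$ is built into $H$ and why $P(\mathbb{R}^{n_0})=0$ is needed --- and then propagate a single non-zero translation across each non-trivial irreducible block via $\mathfrak{h}$-invariance, which itself rests on Lemma~\ref{PRnLemma} and the bracket relations~\eqref{soBrackets}.
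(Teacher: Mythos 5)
Your overall architecture follows the paper's (flat block via $R(\partial_i,\partial_u)_0$, non-trivial blocks via a pure translation $R(\partial_i,\partial_j)_0$ propagated by $\mathfrak{h}$-invariance and irreducibility), and your shortcut via $R(\partial_v,\partial_i)_0=(0,0,0,-\partial_vf(0)\,e_i)$ is a legitimate simplification that disposes of the cases $f=v$ and $\alpha\neq 0$ at once. But the crux step in the remaining cases has a genuine gap: your justification that some $X_{ij}\neq 0$ "because otherwise the $P(e_i)$ in that block could not generate the non-trivial algebra $\mathfrak{h}_\alpha$" is false. If $X_{ij}=0$ for all $i,j$ in the block, what follows is $P^j_{ik}=\theta_i\delta_{jk}-\theta_j\delta_{ik}$, i.e.\ $P(e_k)=s\wedge e_k$ with $s=\sum_{e_i\in\mathbb{R}^{n_\alpha}}\theta_i e_i$. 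For $s\neq 0$ these operators \emph{do} generate a non-trivial algebra --- indeed all of $\mathfrak{so}(n_\alpha)$ (take $s=|s|e_1$; then $P_k\propto e_1\wedge e_k$ and $[e_1\wedge e_k,e_1\wedge e_l]=-e_k\wedge e_l$), and $P(X)=s\wedge X$ does satisfy the cyclic identity defining $\mathscr{P}(\mathfrak{so}(n_\alpha))$, so no contradiction arises from generation alone. What closes the gap in the paper is the relation $\theta_i=\theta(P(e_i))$ built into the construction together with $\theta\big|_{[\mathfrak{h},\mathfrak{h}]}=0$: vanishing of all $X_{ij}$ forces $\mathfrak{h}_\alpha=\mathfrak{so}(n_\alpha)$; for $n_\alpha\geqslant 3$ this algebra is perfect, so $\theta_i=\theta(s\wedge e_i)=0$, hence $s=0$ and $P\big|_{\mathbb{R}^{n_\alpha}}=0$, contradicting the choice of $P$ from Lemma~\ref{PRnLemma}; for $n_\alpha=2$ a separate computation ($s_1=-s_2\xi$, $s_2=s_1\xi$, so $s_1=-s_1\xi^2$) forces $s=0$ as well. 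Your argument never invokes $\theta_i=\theta(P(e_i))$, and without it the non-vanishing claim is unsupported precisely where the lemma is non-trivial.

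A secondary, repairable error: your claim that the boost part of $R(\partial_l,\partial_u)_0$ vanishes for every $l$ fails for $\mathfrak{g}^{\theta,3,\mathfrak{h},\varphi}$, where $H=2\sum_{i>n_0}(\theta_i+\varphi_i)x^iv+\dots$ gives boost $\tfrac12\partial_l\partial_vH-\partial_lf=\varphi_l\neq 0$ in general (as it must be, since type-3 holonomy contains boosts). The bracket with a translation then yields $(0,0,0,(\varphi_l\id_{\mathbb{R}^n}+P_l)X)$ rather than $(0,0,0,P_lX)$; your $\mathfrak{h}$-invariance conclusion survives, since a subspace invariant under $\varphi_l\id_{\mathbb{R}^n}+P_l$ is invariant under $P_l$ (scalar operators preserve every subspace) --- this is the same repair the paper makes when it passes from $[\zeta,(0,0,0,Y)]=(0,0,0,\gamma Y+AY)$ to $(0,0,0,AY)$ by subtracting $\gamma(0,0,0,Y)$ --- but the intermediate claim as you state it is false and should be corrected.
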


\begin{proof}
	If $1 \leqslant i \leqslant n_0$, then $R(\partial_i, \partial_u)_{0} = \left( 0, 0, 0, \frac{1}{2} e_i \right)$, hence, $\mathbb{R}^{n_0} \subset \mathfrak{hol}_{0}(\nabla)$. 	
	We will show that $\mathbb{R}^{n_\alpha} \subset \mathfrak{hol}_{0}(\nabla)$ for $1 \leqslant \alpha \leqslant r$. 
	Assume that $e_i, e_j, e_k \in \mathbb{R}^{n_\alpha}$. Then
	$$R(\partial_i, \partial_j)_{0} = \left( 0, 0, 0, \sum_{e_k \in \mathbb{R}^{n_\alpha}} P^{j}_{ik} e_k -  s_i e_j + s_j e_i \right),$$
	where $s_i = (\partial_i f)(0)$.
	We claim that there exist $i, j$ such that
	$$ \sum_{e_k \in \mathbb{R}^{n_\alpha}} P^{j}_{ik} e_k - s_i e_j + s_j e_i \neq 0.$$
	For the first three connections from the statement of the theorem it holds $s_i = 0$ and there is nothing to prove. 
	In the rest three cases it holds $s_i = \theta_i$. 
	Suppose that $P^{j}_{ik} = s_i \delta_{jk} - s_j \delta_{ik}$ and, hence, 
	$$P_{k} = \sum_{e_i \in \mathbb{R}^{n_\alpha}} s_i e_i \wedge e_k.$$	
	Since $P \big|_{\mathbb{R}^{n_\alpha}} \neq 0$, then the only possible option is that $\mathfrak{h}_\alpha = \mathfrak{so} (n_\alpha)$.
	For $n_\alpha \geqslant 3$ it holds $\theta \big|_{\mathfrak{h}_\alpha} = 0$ and we have a contradiction. For $n_\alpha = 2$ we will assume for convenience that $e_1, e_2$ is a basis of $\mathbb{R}^{n_\alpha}$.
	In this case
	$$s_1 = \theta (P_1) = - s_2 \xi, \quad s_2 = \theta (P_2) = s_1 \xi, \quad \text{where} \quad \xi = \theta \left( \begin{pmatrix} 
	0 & - 1 \\
	1 & 0  \end{pmatrix} \right).$$
	Therefore $s_1 = - s_1 \xi^2$ and hence $s_1 = s_2 = 0$.
	
	Thus there exists non-zero $Y \in \mathbb{R}^{n_\alpha}$, such that $\big( 0, 0, 0, Y \big) \in \mathfrak{hol}_{0}(\nabla)$. Since the image of $P \big|_{\mathbb{R}^{n_\alpha}}$ generates the Lie algebra $\mathfrak{h}_\alpha$ and $\mathfrak{hol}_{0}(\nabla)$ is a Lie algebra, then from~\eqref{RiuEq} it follows that for every $A \in \mathfrak{h}_\alpha$ there exist $\gamma, \beta$ such that  
	\begin{gather*}
	\zeta := \begin{pmatrix} 
	\gamma + \beta & \ast & \ast \\
	0 & A + \beta E_n & \ast \\
	0 & 0 & - \gamma + \beta \end{pmatrix} \in \mathfrak{hol}_{0}(\nabla).
	\end{gather*}
	Then $[\zeta, (0, 0, 0, Y)] = (0, 0, 0, \gamma Y + A Y) \in \mathfrak{hol}_{0}(\nabla)$ and hence $(0, 0, 0, A Y) \in \mathfrak{hol}_{0}(\nabla)$. Similarly, for every $A_1,\ldots , A_s \in \mathfrak{h}_\alpha$ it holds $(0, 0, 0, A_s \cdots A_1 Y) \in \mathfrak{hol}_{0}(\nabla)$. Since $\mathfrak{h}_\alpha \subset \mathfrak{so} (n_\alpha)$ is irreducible, then $\mathbb{R}^{n_\alpha} \subset \mathfrak{hol}_{0}(\nabla)$. The lemma is proved.
\end{proof}
To complete the proof of the inclusion $\mathfrak{g} \subset \mathfrak{hol}_{0}(\nabla)$ it is necessary to consider each algebra $\mathfrak{g}$ separately. Take, for instance, $\mathfrak{g} = \mathbb{R} \id_{\mathbb{R}^{1,n+1}} \oplus \mathfrak{g}^{1,\mathfrak{h}}$ and let $\nabla$ be the corresponding connection. Then
\begin{align*}
R (\partial_v, \partial_u)_0 & = \big( 1, -1, 0, \ast \big),\\
\nabla_v R (\partial_v, \partial_u)_0 &= \big( 0, \hphantom{-}1, 0, \ast \big).
\end{align*}
From this and Lemma~\ref{RnHolLemma} we obtain $\mathbb{R} \id_{\mathbb{R}^{1,n+1}} \oplus \mathbb{R} (0, 1, 0, 0) \subset \mathfrak{hol}_{0}(\nabla)$. Next, from~\eqref{RiuEq} it holds $(0, 0, P_i, 0) \in \mathfrak{hol}_{0}(\nabla)$. This and Lemma \ref{PRnLemma} imply  $\mathfrak{h} \subset \mathfrak{hol}_{0}(\nabla)$.

For the remaining algebras one can show in the same way that $R (\partial_a, \partial_b)_0$ generate $\mathfrak{g}$.~\medskip

\noindent\textbf{Proof of the inclusion $\mathfrak{hol}_{0} \subset \mathfrak{g}$.}~\medskip

Since $\nabla$ preserves $\langle \partial_v \rangle$, then
%from the fundamental principle it follows that 
$\mathfrak{hol}_{0}(\nabla) \subset \mathfrak{co}(1, n + 1)_{\mathbb{R} p}$. Next, the elements $\Gamma_a$ and $R(\partial_a, \partial_b)$ have the form
\begin{equation}
\label{RFormEq}
\begin{pmatrix} 
\alpha + \beta & \ast & \ast \\
0 & \alpha E_n + B & \ast \\
0 & 0 & \alpha - \beta \end{pmatrix},
\end{equation}
where $\alpha, \beta$ are functions and $B$ is a function with values in $\mathfrak{h}$. Moreover, from~\eqref{RecurrentEq} it follows that every $\nabla_{a_\alpha}\cdots\nabla_{a_1} (R (\partial_a, \partial_b))$ also has the form~\eqref{RFormEq}. Then for each algebra $\mathfrak{g}$ one can check that $R (\partial_a, \partial_b)_0 \in \mathfrak{g}$ and $\nabla_{a_\alpha}\cdots\nabla_{a_1} (R (\partial_a, \partial_b))_0 \in \mathfrak{g}$.

The theorem is proved.
\end{proof}

%\section*{Appendix}

%\newpage

\end{document}